\documentclass[a4paper,9pt]{amsart}
\usepackage{amssymb,amsthm,amsmath,stmaryrd}
\usepackage{mathtools} %
\usepackage{todonotes}
\usepackage{fullpage}
\usepackage[colorlinks=true, linkcolor=blue, anchorcolor=blue, citecolor=blue, filecolor=blue, menucolor=blue, urlcolor=blue]{hyperref}
\usepackage[shortlabels]{enumitem}

\usepackage[font=small,labelfont=bf]{caption}
\usepackage[labelformat=simple]{subcaption}

\newcommand{\Q}{\mathbb{Q}}
\newcommand{\R}{\mathbb{R}}

\newcommand{\h}{\mathcal{H}}

\newcommand{\Rp}{\mathbb{RP}}
\newcommand{\pth}[1]{\left(#1\right)}
\newcommand{\ptb}[1]{\left[#1\right]}

\newcommand{\sign}{{\rm sign}}

\newcommand{\eqdef}{:=}  %

\newcommand{\defn}[1]{\emph{\color{violet}#1}} %

\DeclareMathOperator{\conv}{conv} %
\DeclareMathOperator{\dist}{dist} %
\newcommand{\aff}[1]{\langle #1 \rangle} %

\newcommand{\ball}[2]{B_{#1}( {#2} )} %
\newcommand{\sphere}[2]{S_{#1}({#2})} %

\newtheorem{theorem}{Theorem}
\newtheorem{reptheorem}{Theorem}
\newtheorem{conjecture}[theorem]{Conjecture}

\newtheorem{lemma}{Lemma}[section]
\newtheorem{proposition}[lemma]{Proposition}

\newtheorem{remark}[lemma]{Remark}

\title{An asymptotic rigidity property from the realizability of chirotope extensions}
\author{Xavier Goaoc$^1$ \and Arnau Padrol$^2$}

\address{$1$. Université de Lorraine, CNRS, INRIA, LORIA, Nancy, F-54000, France\\
  \texttt{xavier.goaoc@loria.fr}}
\address{$2$. Dept. Matem\`atiques i Inform\`atica, Universitat de Barcelona, and Centre de Recerca Matem\`atica, Spain\\
  \texttt{arnau.padrol@ub.edu}}

\thanks{The research of A. Padrol is supported by grants PID2022-137283NB-C21 and PCI2024-155081-2 funded by MCIN/AEI/10.13039/501100011033/UE, PAGCAP ANR-21-CE48-0020 of the French National Research Agency ANR, SGR GiT-UB (2021 SGR 00697) funded by the Dept. Recerca i Universitats of Generalitat de Catalunya, and the Severo Ochoa and María de Maeztu Program CEX2020-001084-M of the Spanish State Research Agency.}
\begin{document}

\begin{abstract}
  Let $P$ be a finite full-dimensional point configuration in $\R^d$. We show that if a point configuration~$Q$ has the property that all finite chirotopes realizable by adding (generic) points to $P$ are also realizable by adding points to $Q$, then $P$ and $Q$ are equal up to a direct affine transform. We also show that for any point configuration $P$ and any $\varepsilon>0$, there is a finite, (generic) extension $\widehat P$ of $P$ with the following property: if another realization $Q$ of the chirotope of $P$ can be extended so as to realize the chirotope of $\widehat P$, then there exists a direct affine transform that maps each point of $Q$ within distance $\varepsilon$ of the corresponding point of $P$.
\end{abstract}

\maketitle

\section{Introduction}

A \defn{point configuration} in $\R^d$ is a labeled set of points $P=\{p_i\}_{i\in X}$ where $X$ is a finite set, \defn{the labels}, and $p_i\in\R^d$ for each $i\in X$. Formally, $P$ is a map from $X$ to $\R^d$, that is, an element of $(\R^d)^X$. The \defn{orientation} of a $(d+1)$-tuple $(p_1,p_2, \ldots, p_{d+1})$ of points in $\R^d$ is defined as
\[ \chi(p_1,p_2, \ldots, p_{d+1}) \eqdef \sign \det \pth{\begin{matrix} p_{1} & p_{2} & \ldots & p_{{d+1}}\\ 1 &1& \ldots &1\end{matrix}}.\]
The \defn{chirotope} of a point configuration $P \in (\R^d)^X$, with $|X| \ge d+1$, is the map
\[ \chi_P \colon \left\{\begin{array}{rcl} X^{d+1} & \to & \{-,0,+\}\\ (i_1,i_2,\ldots, i_{d+1}) & \mapsto & \chi\pth{p_{i_1},p_{i_2}, \ldots, p_{i_{d+1}}}
\end{array}\right.\]
sending each $(d+1)$-tuple of $X$ to the orientation of the points they label. A \defn{realizable chirotope} on a finite set $X$ is a function $\omega:X^{d+1} \to \{-,0,+\}$, for some $d \ge 1$, that is the chirotope of some point configuration $P \in (\R^d)^X$ (i.e. $\omega = \chi_P$); in that case, $P$ is a \defn{realization} of $\omega$. We say that a point configuration $P \in (\R^d)^X$ is \defn{generic} if no $d+1$ points are contained in a common hyperplane, that is if $\chi_P$ takes values in $\{-,+\}$.

\bigskip

Let $X \subset Y$ be label sets. A point configuration $\widehat P \in (\R^d)^Y$ \defn{extends} a point configuration $P \in (\R^d)^X$ if $\widehat P_{|X} \eqdef \{\widehat p_i\}_{i \in X}$ coincides with $P$. We say that an extension $\widehat P$ of $P$ is a \defn{generic extension} of $P$ if no point $p$ of $\widehat P \setminus P$ lies on a hyperplane spanned by points of $\widehat P \setminus \{p\}$. (Note that this requirement allows $P$ to be nongeneric, and is thus stronger than asking that $\widehat P_{|Y\setminus X}$ is generic but weaker than asking that $\widehat P$ is generic.) We say that a chirotope $\chi$ is \defn{realizable on top of} a point configuration $P$ if there exists an extension of $P$ realizing~$\chi$. Our first result is the following ``rigidity'' property:

\begin{theorem}\label{t:mainog} %
Two full-dimensional point configurations $P$ and $Q$ in $\R^d$ are  directly affinely equivalent if and only if for every finite generic extension $\widehat P$ of $P$, the chirotope of $\widehat P$ is realizable on top of $Q$.
\end{theorem}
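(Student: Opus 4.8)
The forward implication is routine: if $A$ is a direct affine transform with $A(p_i)=q_i$ for all $i\in X$, then for every extension $\widehat P$ of $P$ the configuration $A(\widehat P)$ extends $Q$ and has the same chirotope as $\widehat P$, since direct affine maps preserve the orientation of every $(d{+}1)$-tuple; hence $\chi_{\widehat P}$ is realizable on top of $Q$. The same remark shows the property in the statement is invariant under replacing $Q$ by a direct affine image of itself, which I use freely. For the converse, first apply the hypothesis to the vacuously generic extension $\widehat P=P$: the only realization of $\chi_P$ on top of $Q$ over the label set $X$ is $Q$ itself, so $\chi_Q=\chi_P$; in particular $Q$ is full-dimensional, and choosing indices $e_0,\dots,e_d\in X$ with $\chi_P(e_0,\dots,e_d)=+$, the unique affine map taking $(q_{e_0},\dots,q_{e_d})$ to $(p_{e_0},\dots,p_{e_d})$ is direct, so after replacing $Q$ by its image I may assume $q_{e_j}=p_{e_j}$ for $0\le j\le d$. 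It then remains to prove $q_k=p_k$ for all $k\in X$.

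The crux is an \emph{$\varepsilon$-rigidity lemma}, which is essentially the second theorem announced in the abstract: for every $\varepsilon>0$ there is a finite generic extension $\widehat P^{\varepsilon}$ of $P$ such that, in any realization $R$ of $\chi_{\widehat P^{\varepsilon}}$, the unique direct affine transform sending $(r_{e_0},\dots,r_{e_d})$ to $(p_{e_0},\dots,p_{e_d})$ moves each $r_i$ to within distance $\varepsilon$ of $p_i$. Granting this, for each $\varepsilon>0$ the hypothesis gives a realization $R$ of $\chi_{\widehat P^{\varepsilon}}$ extending the normalized $Q$; its basis is already $(p_{e_0},\dots,p_{e_d})$, so the transform of the lemma is the identity, whence $|q_i-p_i|<\varepsilon$ for all $i$. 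Letting $\varepsilon\to0$ yields $q_i=p_i$, which finishes the proof.

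To prove the lemma I would turn the affine coordinates of each $p_i$ relative to $e_0,\dots,e_d$ into orientation data that a generic extension can record up to precision $\varepsilon$. Concretely: fix a rational grid $\Gamma$ of mesh $\eta\ll\varepsilon$ inside a ball containing $P$, adjoin a bounded family of auxiliary points placed far out along prescribed rational directions (to play the role of points of the hyperplane at infinity), perturb everything into general position, and let $\widehat P^{\varepsilon}$ be the resulting generic extension. Running an \emph{approximate} form of the von Staudt coordinate constructions --- each exact incidence of flats being replaced by the insertion of a single generic point in the thin region where those flats nearly meet, and each point at infinity by a sufficiently remote point --- one shows that $\chi_{\widehat P^{\varepsilon}}$ forces, after the basis-normalizing transform, every grid point of a realization to within $\eta/2$ of its intended position; the signs of $\chi(p_i,\gamma_1,\dots,\gamma_d)$ over the $d$-subsets $\{\gamma_1,\dots,\gamma_d\}\subset\Gamma$ then localize $p_i$, hence the corresponding point of the realization, to a single cell of the nearly correctly placed grid.

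The main obstacle is the error analysis of this construction. Simple examples (already on a line, where the chirotope of an extension records only the order type and so ignores affine ratios) show that no finite generic extension can pin a point to an exact location; the ban on exact incidences inherent in \emph{generic} extensions is what forces one to work with small neighbourhoods rather than exact loci, and this is precisely why the rigidity is asymptotic. The substantive content of the lemma is the quantitative statement that each near-incidence contributes an error controlled linearly by the reciprocal of the remoteness parameter and by the current mesh of already-pinned points, so that choosing the auxiliary points far enough out and the construction fine enough keeps the accumulated error below $\varepsilon$, all while maintaining genericity of the extension (no added point on a flat spanned by the others). This is where essentially all the work goes.
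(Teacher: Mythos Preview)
Your reduction of Theorem~\ref{t:mainog} to the $\varepsilon$-rigidity lemma is correct and coincides with the paper's argument: the paper proves Theorem~\ref{t:mainog} from Lemma~\ref{l:epsarb} by exactly the normalization-then-let-$\varepsilon\to 0$ argument you give (and also remarks that Theorem~\ref{t:mainog} follows immediately from Theorem~\ref{t:maineps}). So the outer shell of your proof is fine and matches the paper.

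Where your proposal diverges, and where the gaps are, is in the sketch of the rigidity lemma itself. The paper's route is: prove an \emph{exact} (hence non-generic) planar version via Von Staudt (Lemma~\ref{l:asym2witness}); upgrade it to a \emph{generic} extension through a systematic replacement of each degenerate point by a small simplex whose chirotope encodes the relevant incidence (the ``scattering'' of Lemmas~\ref{l:scattering1}--\ref{l:scattering2}); climb from $\R^2$ to $\R^d$ by a recursion on corank-$1$ subconfigurations (Lemmas~\ref{l:good}, \ref{l:doublecone}, \ref{l:eps}); and finally handle non-very-generic $(d{+}2)$-tuples by the wedge construction of Section~\ref{s:perturb}. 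Your sketch instead proposes a single grid plus ``approximate'' Von Staudt with far-away surrogates for points at infinity. Two concrete issues:

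\begin{itemize}
\item \textbf{Far-out points are not far-out in realizations.} The chirotope records only on which side of each spanned hyperplane a point lies; it does not force a point that was ``far out'' in $\widehat P^{\varepsilon}$ to be far out in a realization $R$. So you cannot use such points as proxies for infinity without an independent mechanism that pins them, which is circular. The paper avoids this by working with \emph{exact} Von Staudt incidences first (so the relevant cross-ratios are genuinely determined), and only afterwards destroys the incidences via scatterings that, crucially, still force any realization to recover the needed intersection point (Lemma~\ref{l:chirointer} is the engine here).
\item \textbf{Genericity versus rigidity.} Your ``insert a generic point in the thin region where flats nearly meet'' is the right intuition, but the chirotope of a single generic point only locates it relative to hyperplanes spanned by \emph{already-pinned} points; if those points have drifted, so have the hyperplanes, and the errors compound through the Von Staudt chain. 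The paper's scattering replaces a degenerate point by a full-dimensional simplex whose chirotope with the surrounding configuration forces, in \emph{any} realization, the exact incidence to be recoverable (property~(c) in the definition of scattering). This is the missing ingredient that lets one maintain genericity without losing the algebraic constraints, and your sketch does not supply an equivalent.
\end{itemize}

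You also do not say how to leave the plane: Von Staudt is inherently $2$-dimensional, and the paper's passage to $\R^d$ via good pairs and $P_{\downarrow I}$ is a nontrivial piece of the argument with no analogue in your outline.
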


\noindent
In coarser terms, we can say that two point configurations \defn{have the same extensions} if every finite chirotope realizable on top of one is realizable on top of the other. Theorem~\ref{t:mainog} implies that for $d \ge 2$, two $d$-dimensional point configurations have the same extensions if and only if they are directly affinely equivalent. This is in sharp contrast with the situation for $d=1$, as two point configurations in the real line have the same extensions if and only if their points are in the same order. 

\bigskip

What happens if we bound from above the number of points added in the extension? ~Theorem~\ref{t:mainog} cannot generalize, not even for a single configuration $P$. Indeed, there are infinitely many convex quadrilaterals in $\R^2$ that are pairwise not affinely equivalent, whereas for every integer~$k$ there are only finitely many distinct sets of chirotopes on at most $4+k$ points. 

Finite extensions nevertheless yield a quantitative analogue of Theorem~\ref{t:mainog}, in which a single extension of bounded size can be used to discriminate all configurations that are ``at least $\varepsilon$-far away''
from a given configuration~$P$:

\begin{theorem}\label{t:maineps}
  For every $d \ge 2$ and every full-dimensional point configuration $P \in (\R^d)^X$ there exists constants $C(P)$ and $\tau(P)>0$ such that for every $0<\varepsilon \le \tau(P)$, there exists a finite generic extension $\widehat P$ of $P$, with $\widehat P \setminus P$ of size at most $C(P)\log\frac1{\varepsilon}$ with the following property: for any configuration $Q \in (\R^d)^X$ such that $\chi_{\widehat P}$ can be realized on top of $Q$, there exists a direct affine transform $\varphi$ such that $\max_{i \in X} \|p_i-\varphi(q_i)\|_2 \le \varepsilon$.
\end{theorem}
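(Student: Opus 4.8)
The plan is to quantify the strategy that (presumably) underlies Theorem~\ref{t:mainog}: for each point of $P$ one builds a finite generic extension that, after a direct affine normalization, pins the corresponding point of any realization into a small ball; here we must do this with only $O(\log\frac1\varepsilon)$ extra points. Since $P$ is full-dimensional, it contains $d+1$ affinely independent points; applying a direct affine transform to $P$ changes neither which chirotopes are realizable on top of $P$ nor (up to rescaling the constants $C(P),\tau(P)$ we will produce) the conclusion, so we may assume those $d+1$ points form a fixed reference frame $\mathcal F=(0,e_1,\dots,e_d)$ with fixed orientation and that $P$ lies in a fixed bounded box. Now take any $Q$ admitting $\chi_{\widehat P}$ on top; then $\chi_Q=\chi_{\widehat P}\!\restriction_X=\chi_P$, so the frame tuple of $Q$ has the same nonzero orientation and there is a unique direct affine map $\varphi$ sending the frame points of $Q$ to $\mathcal F$. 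The configuration $\varphi(Q)$ has chirotope $\chi_P$, carries the frame $\mathcal F$, and (applying $\varphi$ to a realization of $\chi_{\widehat P}$ extending $Q$) still admits $\chi_{\widehat P}$ on top. So it suffices to produce $\widehat P$ such that every \emph{frame-normalized} $Q$ admitting $\chi_{\widehat P}$ on top satisfies $\|p_i-q_i\|_2\le\varepsilon$ for all $i\in X$; and, superimposing over $i\in X$ the extensions obtained for the individual points (with disjoint label sets for the added points), it suffices to handle a single point. Thus everything reduces to:

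\smallskip
\noindent\emph{Main Lemma.} Given the frame-normalized $P$, a label $i\in X$, and $\delta>0$ small enough (depending on $P$), there is a generic extension of $P$ with $O\!\left(\log\frac1\delta\right)$ added points (the implied constant depending on $P$) such that every frame-normalized $Q$ admitting its chirotope on top has $\|p_i-q_i\|_2\le\delta$.

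\smallskip
\textbf{Chirotopic arithmetic.} The Main Lemma is proved by a quantitative, ``sidedness-only'' version of von~Staudt's constructions. The frame $\mathcal F$ fixes affine coordinates, and classically sums, products, and hence arbitrary rational functions of coordinates are computed by chains of line-meet-line and point-join-point operations, each of which is pinned by the orientations of $O(1)$ auxiliary points. Evaluating by Horner's scheme a dyadic approximation of the coordinates of $p_i$ to within $\delta$ costs $O(\log\frac1\delta)$ such operations, hence $O(\log\frac1\delta)$ added points. Two points need care. First, affine von~Staudt implicitly uses the line at infinity (for ``midpoints'' and ``parallels''); we replace it by a real point at distance $\mathrm{poly}(1/\delta)$ in a generic direction, which is itself pinnable to relative precision $\delta$ by a further $O(\log\frac1\delta)$-point gadget (iterated segment doubling). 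Second, a chirotope records sidedness rather than incidence, so each ``incidence'' in the construction must be replaced by a thin sandwich between two nearby lines spanned by already-pinned points; this is exactly why the conclusion is only approximate, and it forces the construction to be a \emph{cascade}: the gadget realizing the $k$-th operation works at scale $\sim 2^{-k}$ around $p_i$, its auxiliary points being pinned to precision $O(2^{-k})$ by the gadgets of smaller index together with the exact frame. All added points are placed generically (allowed, since $P$ itself may be nongeneric), and the whole extension is realized by the intended configuration, since all the orientation conditions imposed are strict and hence stable under small perturbation.

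\smallskip
\textbf{Main obstacle and conclusion.} The technical heart — and the step I expect to be the real obstacle — is the error analysis of this cascade: one must show that the uncertainty on the target point does not blow up over the $O(\log\frac1\delta)$ steps. The key mechanism is that the Horner step $x\mapsto \tfrac12(x+b)$ is non-expansive, so the propagated uncertainty is \emph{damped} by a factor $\le\tfrac12$ at each step, while the fresh slack introduced at step $k$ (auxiliary-point uncertainty plus sandwich width, both controlled by working at scale $2^{-k}$ and by leveraging the exact frame) is $O(2^{-k})$; a geometric-series estimate then bounds the uncertainty of the final point by $O(\delta)$, which is what we need after rescaling $\delta$ by a constant. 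Collecting the bounds: the extension for one point has $O(\log\frac1\delta)$ added points, so does its superposition over the (constantly many) labels in $X$, and taking $\delta=\varepsilon$ — with $\tau(P)$ chosen so the construction is valid for $0<\varepsilon\le\tau(P)$ — yields Theorem~\ref{t:maineps}, with $C(P)$ absorbing $d$, $|X|$, the diameter of $P$, and the non-degeneracy of the frame $\mathcal F$. (Letting $\delta\to0$ along this family of extensions recovers Theorem~\ref{t:mainog}.)
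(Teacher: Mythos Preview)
Your outer scaffolding --- fix an affine frame in $P$, normalize $Q$ by the direct affine map matching frames, reduce to pinning one point at a time, and superimpose --- is exactly the paper's proof of Theorem~\ref{t:maineps}; your Main Lemma is the paper's Lemma~\ref{l:epsarb}. The divergence is entirely in how that lemma is established, and there your sketch has two real gaps.

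The first is dimension. Your chirotopic arithmetic is line-meet-line in a plane; you never say how to access the affine coordinates of $p_i\in\R^d$ when $d\ge 3$, which requires projecting $p_i$ onto coordinate lines along frame hyperplanes --- itself an incidence-type operation that must be encoded and whose error controlled. The paper spends most of its length on precisely this: it drops dimension recursively via corank-$1$ reductions $P\mapsto P_{\downarrow I}$ (Section~\ref{s:corank1}), controls how the intersection point $p_I$ moves under perturbation of $p_{d+1}$ by a double-cone estimate (Lemmas~\ref{l:doublecone} and~\ref{l:eps}), handles configurations that are not ``very generic'' by a separate wedge-intersection argument (Section~\ref{s:perturb}), and only at the bottom of the recursion, in $\R^2$, invokes von Staudt. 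The second gap is the one you flag yourself. The paper avoids your cascade error analysis entirely: its planar step (Lemma~\ref{l:asym2witness}) uses \emph{exact} collinearities, taking the target point $p$ as the $p_\infty$ of a projective basis and constructing two points whose parameters $g_\pm$ bracket the parameter of the true point at infinity, so that a single orientation comparison excludes every far $q$; genericity of the final extension is obtained \emph{a posteriori} by scattering (Section~\ref{s:scattering}), which replaces each non-generic added point by a small simplex with the property that any realization can be extended back to recover the original incidence. Your Horner-damping claim is asserted at the arithmetic level but never checked at the geometric level, where a sandwiched von Staudt gadget combined with an approximate point-at-infinity need not be contractive; the paper's exact-then-scatter route sidesteps this issue completely.
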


A converse to Theorem~\ref{t:maineps} is not to be expected. In particular, a simple pigeonholing argument shows that one cannot bound from above the size of an extension sufficing to discriminate any two point configurations that are in the ``$\varepsilon$-neighborhood'' of any fixed point configuration $P$.

\subsection{Proof overview}

Towards proving Theorem~\ref{t:mainog}, we consider four variants of ``extension equivalence'' for full-dimensional point configurations $P$ and $Q$ in~$\R^d$:
\begin{enumerate}[(i)]
    \item $P$ and $Q$ have the same extensions;
    \item $P$ and $Q$ have the same generic extensions;
    \item for every finite extension $\widehat P$ of $P$, the chirotope of $\widehat P$ is realizable on top of $Q$;
    \item for every finite generic extension $\widehat P$ of $P$, the chirotope of $\widehat P$ is realizable on top of $Q$.
\end{enumerate}
It is clear that any directly affinely equivalent configurations~$P$ and $Q$ satisfy all of these properties. Observe that~(iii) differs from~(i) in that it gives asymmetric roles to $P$ and $Q$. (The same goes for~(ii) and~(iv).) As illustrated on Figure~\ref{fig:nonsymmetric}, it is not obvious that~(iii) implies~(i). Yet, Theorem~\ref{t:mainog} implies that Properties~(i)-(iv) are pairwise equivalent, and capture the direct affine equivalence between $P$ and $Q$.

\begin{figure}[htpb]
	\centering
	\begin{subfigure}[t]{.3\linewidth}
		\centering
		\includegraphics[width=\linewidth]{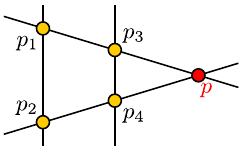}

	\end{subfigure}\qquad\qquad
	\begin{subfigure}[t]{.3\linewidth}
		\centering
		\includegraphics[width=\linewidth]{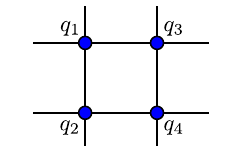}
	\end{subfigure}
	
	\caption{Two (projectively equivalent) point configurations $P=(p_1,p_2,p_3,p_4)$ and $Q=(q_1,q_2,q_3,q_4)$. The chirotope of any $1$-point extension of $Q$ is realizable on top of $P$, but the chirotope of $P \sqcup \{p\}$ is not realizable on top of $Q$.}\label{fig:nonsymmetric}
\end{figure}

\medskip

Our proofs start with an application of the classical Von Staudt constructions (see Section~\ref{s:vS}), which translate algebraic operations into point-line incidences in the plane. Lemma~\ref{l:asym2witness} then uses these constructions in a parameterized way to establish a planar version of Theorem~\ref{t:maineps} that allows nongeneric extensions.

\medskip

To handle higher dimension, we restrict our attention to small point configurations ``of corank~$1$'' (Section~\ref{s:corank1}). These configurations allow to reduce the dimensions by replacing a pair of points by the intersection of the line they span with the hyperplane spanned by the rest of the configuration. With this, Proposition~\ref{p:main} proves that in arbitrary dimension, (i) implies direct affine equivalence. This statement is not needed for our main results but we consider it of independent interest as it is much simpler to prove than Theorem~\ref{t:mainog}. 

\medskip

To move on from~(i) to~(ii) and eventually~(iv), we use a classical perturbation technique called ``scattering''. Unfortunately we are not aware of any published version of scattering that suits our purpose (among others because we work in arbitrary dimension), so we spell out two scattering arguments (Section~\ref{s:scattering}). We then generalize Lemma~\ref{l:asym2witness} to point configurations in $\R^d$ that are very generic, using ideas similar to the proof of Proposition~\ref{p:main}, while applying scatterings along the way (Section~\ref{s:verygeneric}). We then make a careful perturbation using approximations of lines and halfspaces in $\R^d$ (Section~\ref{s:perturb}) and wrap up the proofs of Theorems~\ref{t:mainog} and~\ref{t:maineps} (Section~\ref{s:proofs}). Throughout, we keep a careful account of the size of the extensions.

\subsection{Context, motivation, and related work.}

Chirotopes realizable in $\R^d$ are in a 2-to-1 correspondence with the realizable acyclic oriented matroids of rank $d+1$, and are also called \defn{labeled order types}. They have been extensively studied (see \cite{BLSWZ} for a seminal reference, and~\cite{ZAK2024} for a recently updated dynamic survey on the topic), as they provide a versatile abstract model that can be applied to many mathematical structures. They have in particular numerous applications in discrete and computational geometry~\cite{HandbookRichterGebertZiegler}, as many relevant combinatorial properties of a point configuration are captured by the associated chirotope, which is a discrete structure. 

\subsubsection*{Refinements of chirotopes.}

Chirotopes nevertheless fail to capture some important geometric properties. For example, the two configurations from Figure~\ref{fig:hexagons} have the same chirotope, but the lines they span have different intersection patterns. Several refinements of oriented matroids have been proposed to be able to distinguish finer geometric invariants of point configurations, often via oriented matroids of associated geometric objects. Some examples are \emph{allowable sequences} and \emph{sweep oriented matroids}~\cite{GP93,PadrolPhilippe}, \emph{adjoints of oriented matroids}~\cite{BachemKern1986} and {\em strong geometries}~\cite{gros2025strong}, and  CCC-systems~\cite{Knuth1992}.

\begin{figure}[htpb]
	\centering
		\includegraphics[width=.4\linewidth]{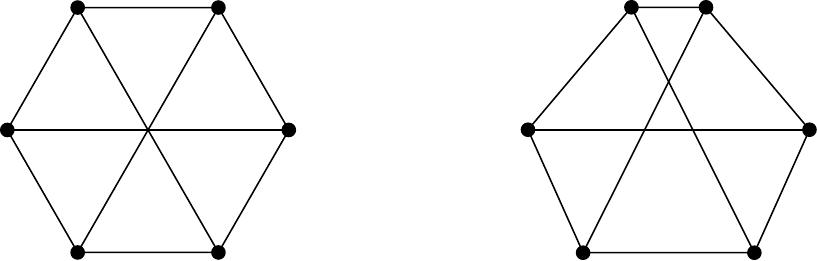}
		
	\caption{Two point configurations with the same chirotope.}\label{fig:hexagons}
\end{figure}

One way of distinguishing the two configurations of Figure~\ref{fig:hexagons} is via the one-point extensions they admit. This naturally leads to consider the finer stratifications of the set of all point configurations according not only to their order type but also to their extensions with up to $k$ points, and eventually at the limit with arbitrary many points. Our Theorem \ref{t:maineps} relates such a stratification to the Euclidean distance between point configurations modulo direct affine transforms, and Theorem~\ref{t:mainog} shows that at the limit the stratification is the one given by direct affine equivalence.

\subsubsection*{Extensions of oriented matroids.}

Extensions of oriented matroids are a very important topic by themselves, and in particular the space of all extensions of an oriented matroid has been studied~\cite{Liu2017,SturmfelsZiegler1993}. However, note that our approach concerns only those extensions that can be realized on top of a prescribed realization, which is a very different point of view that requires much distinct tools. In particular, our stratification should not be confused with the \emph{extension equivalence} proposed in \cite{BachemKernExtensions} which is done at the level of abstract oriented matroids and does not take into account the realizations.

\subsubsection*{Realization spaces.}

Theorem~\ref{t:mainog} stands in contrast to Mnëv's universality theorem~\cite{MnevT,MnevRoklin}. To see this, define, for every point configuration $P \in (\R^d)^X$ and every integer $k \ge 0$, the set $[P]_k$ as the set of point configurations $Q \in (\R^d)^{X}$ such that every chirotope of size $|X|+k$ realizable on top of $P$ is realizable on top of $Q$. On the one hand, the set $[P]_0$ is the realization space of $\chi_P$ and Mnëv's theorem asserts that in can be arbitrarily complicated (see \cite{RG95,RG99} for a survey and an alternative presentation of the proof). On the other hand, Theorem~\ref{t:mainog} states that  $[P]_\infty \eqdef \cap_{k \ge 0} [P]_k$ is the orbit of $P$ under the action of the direct invertible affine transforms of $\R^d$. Theorem~\ref{t:maineps} may suggest that restricting one's attention to the realizations that allow well-chosen finite extensions might restrict how wild the realizations can be; we conjecture that when only extensions of bounded size are considered, universality still prevails:

\begin{conjecture}\label{c:universality}
  For every $0 \le k < \infty$ and every primary basic semi-algebraic set $S$, there exists a finite point configuration $P \subset \R^2$ such that $[P]_k$ is stably equivalent to~$S$.
\end{conjecture}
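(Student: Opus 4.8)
\emph{Proof proposal.} The plan is to bootstrap from Mnëv's universality theorem, which already settles the case $k=0$: by definition $[P]_0$ is the realization space of $\chi_P$, and Mnëv's theorem produces, for every primary basic semi-algebraic set $S$, a planar point configuration whose realization space is stably equivalent to $S$. For $k\ge 1$, note first that $[P]_k\subseteq[P]_0$: if $Q\in[P]_k$, then $P$ has (generic) extensions with $k$ new points, and realizing the chirotope of such an extension on top of $Q$ forces $\chi_Q=\chi_P$. Next, $[P]_k$ is obtained from $[P]_0$ by intersecting it with one condition $C_\omega=\{Q : \exists\, y\in(\R^2)^k,\ \chi_{Q\cup y}=\omega\}$ for each of the finitely many chirotopes $\omega$ of size $|X|+k$ realizable on top of $P$; each $C_\omega$ is the projection of a semi-algebraic set, hence semi-algebraic, so $[P]_k$ is a semi-algebraic subset of $[P]_0$. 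The conjecture therefore asks for a planar $P$ for which cutting the realization space of $\chi_P$ by this particular finite list of extendability conditions leaves a set stably equivalent to $S$.

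The approach I would take is to harden Mnëv's construction against $k$-point extensions. Starting from a configuration $P_0$ whose realization space is stably equivalent to $S$, built as usual from the Von Staudt adder and multiplier gadgets of Section~\ref{s:vS}, I would enlarge it to $P=P_0\cup B$ by adjoining a large ``buffer'' $B$ of points in very general position, with size depending on $k$ and on $|X_0|$, aiming for two properties. First, adding $B$ should not change the stable-equivalence type of the realization space; this is the standard manoeuvre of adjoining points in generic or projectively determined position and should be routine. Second, and crucially, the buffer should be large and generic enough that every chirotope of size $|X|+k$ realizable on top of $P$ is realizable by a \emph{generic} $k$-point extension (the new points lying off every line spanned by $P$ and off every line through two of them), and that every such generic sidedness pattern can be reproduced on top of an arbitrary realization $Q$ of $\chi_P$. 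If this succeeds, each $C_\omega$ is vacuous, $[P]_k=[P]_0$, and the conjecture follows by applying the $k=0$ case to $P$ itself. (Alternatively, one could try to use the conditions $C_\omega$ as extra encoding power, cutting a realization space arranged to be stably equivalent to some auxiliary set down to one stably equivalent to $S$; but controlling which $\omega$ occur in the list is equally delicate.)

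The main obstacle is precisely the phenomenon that drives Theorem~\ref{t:mainog}: a $k$-point extension can force a concurrence of three or more lines spanned by $P$, and whether such a concurrence holds in a given realization is \emph{not} a function of the chirotope (Figure~\ref{fig:hexagons}). Hence the conditions $C_\omega$ need not simplify, and they threaten to rigidify the configuration and collapse the stable-equivalence type down to a point. Making the buffer argument go through therefore requires controlling the arrangement of lines spanned by $P$ -- not merely its chirotope -- finely enough to guarantee that every incidence detectable by at most $k$ generic probe points is already forced by $\chi_P$, while the degrees of freedom encoding $S$ remain invisible to such probes; concretely, one wants each elementary Von Staudt operation ``spaced out'' enough that it cannot be reconstructed from $\chi_P$ together with $k$ generic points. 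Quantifying how much room is needed as a function of $k$, and checking that stable equivalence survives the padding, is where the real work lies -- and is why the statement is still only a conjecture.
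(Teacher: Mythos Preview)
The statement you are attempting is Conjecture~\ref{c:universality}; the paper does \emph{not} prove it, and you yourself close by noting that ``the statement is still only a conjecture.'' So there is no paper proof to compare against---the paper only remarks that the case $k=1$ has been announced elsewhere.

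As a research sketch your proposal identifies the right tension, but the buffer idea has a concrete gap that is not merely a matter of ``quantifying how much room is needed.'' Adjoining a set $B$ of points in general position to $P_0$ does nothing to the arrangement of lines spanned by $P_0$ itself: any $k$-point probe that detects, say, the relative order of two intersection points of $P_0$-lines along a third $P_0$-line (exactly the phenomenon of Figure~\ref{fig:nonsymmetric}) is still available as an extension of $P=P_0\cup B$, and is still not realizable on top of those realizations $Q$ of $\chi_P$ where that order flips. Generic buffer points neither create nor destroy such incidences, so your hoped-for conclusion $[P]_k=[P]_0$ fails already for $k=1$ unless $P_0$ is engineered so that \emph{every} such order/incidence relation among its spanned lines is already forced by $\chi_{P_0}$. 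But the Von Staudt gadgets encode $S$ precisely through free parameters that move intersection points continuously, so some of these relations must vary across $[P_0]_0$; the buffer does not hide them.

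In short, the reduction ``pad with generic points so that all $C_\omega$ become vacuous'' cannot succeed without a much deeper modification of the Mn\"ev construction---one that controls the full line arrangement, not just the chirotope. Your alternative suggestion (use the conditions $C_\omega$ as extra encoding power) is closer to what a proof would likely require, but as you note, that is an open problem.
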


\noindent
The strong geometry of Gros and Ramirez Alfonsin~\cite{gros2025strong} corresponds to the structure~$[P]_1$, and they announced us that they could prove the case k=1 of conjecture~\ref{c:universality} in the plane~\cite{RG}.

\subsubsection*{Von Staudt constructions.}

The use of the Von Staudt constructions to constrain realization spaces of oriented matroids goes back to Mn\"ev~\cite{MnevT,MnevRoklin}, and is a central piece of many of the results in the area. Our application is in particular inspired by Goodman, Pollack and Sturmfels~\cite{goodman1990intrinsic}, who used these constructions to build point configurations that that have an arbitrary large intrinsic spread, without needing to exploit the full algebraic expressivity of the Von Staudt constructions. Like them, we build extensions that are partially rigid, but towards the (different) goal of separating certain input points from infinity. For this, we need to analyze how our construction changes as one of the base points moves (see Lemma~\ref{l:asym2witness}).

\subsubsection*{Scattering techniques.}

Scattering techniques for oriented matroids were introduced by Las Vergnas in \cite{LasVergnas1986} and further refined in \cite{JMSW1989}. They were also used by Mn\"ev in the version of the Universality Theorem for uniform oriented matroids \cite{MnevRoklin}, and have proven very useful for constructing uniform oriented matroids with interesting realization spaces \cite{AdiprasitoPadrol2017,goodman1990intrinsic}. So far, they have mainly been used for planar configurations. Our presentation in Section~\ref{s:scattering} is for arbitrary dimension, but we provide a simpler version tailored for our specific needs.

\subsubsection*{Counting realizable chirotopes.}

A natural refinement of the chirotope of a point configuration $P$ is the arrangement formed by all the hyperplanes spanned by its points. When extending $P$ by a point $p$, the chirotope of $P \cup \{p\}$ only depends on the cell of the arrangement containing $p$, and distinct cells lead to distinct chirotopes. This implies that every realizable chirotope has $\Omega(n^d)$ one-point realizable extensions, which in turn implies a lower bound of $n^{d^2n+O(n/\log n)}$ for the number of chirotopes of configurations of $n$ points in $\R^d$~\cite[$\mathsection 5(i)$]{goodman1986upper}. Extending one fixed realization of a chirotope $\omega$ may, however, only give access to a small fraction of the extensions of the chirotope: for instance, the chirotope of an $n$-point sequence in convex position in the plane has $\Omega(2^n/n)$ one-point extensions (the number of self-dual 2-colored necklaces, see the discussion in~\cite{pilz2020crossing}), but each realization only gives access to $O(n^4)$ of them (see~\cite[$\mathsection 5(i)$]{goodman1986upper}). Interestingly, the lower bound on the number of realizable chirotopes obtained by repeatedly counting one-point extensions of a single realization is sharp at first order (see~\cite{alon1986number} and the very final remark of~\cite{goodman1986upper}). This suggests that for a typical realizable order type, a constant fraction of its one-point extensions can be realized on top of one and the same point configuration:

\begin{conjecture}\label{c:extensions}
    There exists some constant $c$ such that for $n \ge 3$, for a planar realizable chirotope $\chi$ of size~$n$ chosen uniformly at random, the average number of realizable $1$-point extensions (over all realizations of $\chi$) is at most $c\cdot n^4$.
\end{conjecture}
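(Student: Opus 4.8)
The plan is to recast the statement as a question about the growth of the enumeration function of realizable chirotopes. Let $f(n)$ denote the number of realizable planar chirotopes on $n$ points, and, for a realizable planar chirotope $\chi$ on $n$ points, let $N(\chi)$ be the number of \emph{realizable} chirotopes on $n+1$ points whose restriction to the first $n$ points equals $\chi$. Since a chirotope on $n+1$ points restricting to $\chi$ is realizable on top of \emph{some} realization of $\chi$ precisely when it is itself realizable, $N(\chi)$ is exactly the quantity in the statement. The map sending a realizable $(n+1)$-chirotope to its restriction to the first $n$ points is onto the set of realizable $n$-chirotopes — restrictions of realizations are realizations, and every realization of $\chi$ admits a generic one-point extension — and its fiber over $\chi$ has size $N(\chi)$. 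Hence
\[
  \sum_{\chi}N(\chi)=f(n+1), \qquad\text{so}\qquad \frac{1}{f(n)}\sum_{\chi}N(\chi)=\frac{f(n+1)}{f(n)},
\]
the sum ranging over all realizable $n$-chirotopes. The first step is therefore this essentially formal equivalence: Conjecture~\ref{c:extensions} holds if and only if $f(n+1)\le c\,n^{4}f(n)$, i.e.\ one extra point multiplies the number of realizable planar chirotopes by $O(n^{4})$.

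The second step would be to derive this from a sufficiently precise expansion of $\log f(n)$. The classical bounds of Goodman--Pollack~\cite{goodman1986upper} and Alon~\cite{alon1986number} give $\log f(n)=4n\log n+O(n)$, which yields only $f(n+1)/f(n)=2^{O(n)}$, exponentially weaker than the target. What would suffice is an expansion $\log f(n)=4n\log n+\gamma n+\rho(n)$ with $\gamma$ a constant and $\rho(n+1)-\rho(n)=O(1)$ (any sufficiently regular $\rho(n)=O(n/\log n)$ would do), for then $\log f(n+1)-\log f(n)=4\log n+O(1)$. So the conjecture reduces to a ``regularity of increments'' refinement of the Goodman--Pollack/Alon asymptotics.

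I see two natural attacks on this refinement. The first is to \emph{encode} each realizable $(n+1)$-chirotope by the pair formed by its restriction $\chi$ and a label with $O(n^{4})$ possible values — morally, the face of the arrangement of the $\binom n2$ lines spanned by a realization of $\chi$ into which the new point falls; this is exactly the ``one fixed realization gives only $O(n^{4})$ extensions'' bound recalled in the introduction, and it would go through \emph{if} one could attach to each realizable chirotope a canonical realization, or a bounded family of them, compatibly with restriction. The second is \emph{enumerative}: sharpen the Warren--Milnor--Thom bound on the number of sign patterns of the $\binom n3$ orientation determinants, together with the Goodman--Pollack/Alon lower bound, until the two meet up to a subexponential factor.

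I expect the decisive obstacle, common to both attacks, to be Mn\"ev universality~\cite{MnevT}: realization spaces are in general disconnected, with up to exponentially many components whose spanned-line arrangements can be combinatorially distinct, so there is no canonical realization to attach to $\chi$ and no a priori reason that $N(\chi)=O(n^{4})$ should hold for \emph{every} realizable $\chi$ — the conjecture claims this only on average, and proving even the stronger worst-case version (which is not excluded by current knowledge) would run straight into this disconnectedness. Correspondingly, on the enumerative side the known upper and lower bounds on $f(n)$ differ by a factor $2^{\Theta(n)}$, and closing that gap — even pinning down $\gamma$ above — is a long-standing open problem. A natural intermediate goal, weaker than the conjecture but already of interest, would be $f(n+1)/f(n)=n^{O(1)}$: that on average one extra point multiplies the number of realizable chirotopes only polynomially, which already rules out a super-polynomial average blow-up in the number of realizable extensions.
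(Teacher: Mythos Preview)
This statement is a \emph{conjecture} in the paper, presented explicitly as an open problem; the paper offers no proof, only the heuristic motivation in the preceding paragraph. There is therefore nothing to compare your proposal to, and your write-up is, appropriately, a discussion of strategy rather than a claimed proof.

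Your reduction is correct and is the right way to read the conjecture. With $f(n)$ the number of realizable planar chirotopes on $n$ labeled points and $N(\chi)$ the number of realizable one-point extensions of $\chi$, the restriction map from realizable $(n{+}1)$-chirotopes to realizable $n$-chirotopes is onto with fibers of size $N(\chi)$, so $\sum_\chi N(\chi)=f(n+1)$ and the conjecture is exactly $f(n+1)/f(n)\le c\,n^4$. Your observation that the known asymptotic $\log_2 f(n)=4n\log_2 n+O(n)$ only yields $f(n+1)/f(n)=2^{O(n)}$, and that what is really needed is control on the \emph{increment} of the second-order term, is also correct.

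The obstacles you name --- Mn\"ev universality blocking any canonical-realization encoding, and the standing $2^{\Theta(n)}$ gap between the Goodman--Pollack/Alon lower bound and the Warren--Milnor--Thom upper bound --- are precisely why this is stated as a conjecture. In short: no proof is expected here, you have not supplied one, and your analysis of why is accurate.
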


\subsubsection*{Other related results.}
Goaoc et al. established in~\cite[Theorem~4]{GHJSV2018} another rigidity result for chirotopes in the setting of probability measures representing limits of order types. In a series of works, Ne\v{s}etril, Valtr, Bárány and Pór~\cite{barany2024orientation,barany2022orientation,nevsetvril1994ramsey} established a result with a similar flavour as Theorem~\ref{t:maineps}. Specifically, let $P \in (\R^2)^X$ be a regular $n \times n$ grid. For any point configuration $Q \in (\R^2)^X$ that ``almost realizes $\chi_P$'', in the sense that $\chi_Q$ agrees with $\chi_P$ on all entries that are nonzero in $\chi_P$, there exists a projective transform $\mu$ such that $\max_{i \in X} \|p_i - \mu(q_i)\|_2 = O(1/n)$. Neither of these results appear to imply or be implied by Theorems~\ref{t:mainog} or~\ref{t:maineps}, and the proof techniques are different.

\section{Terminology and preliminaries}

We write $\|\cdot\|_2$ for the Euclidean norm of $\R^d$. Given a point configuration $P$, we write $\aff{P}$ for its \defn{affine hull}. We say that a point configuration $P \in (\R^d)^X$ is \defn{full-dimensional} if $\aff{P}=\R^d$. 

\bigskip
Two point configurations $P \in(\R^d) ^X$ and $Q \in (\R^d)^X$ are \defn{affinely equivalent} if there exists an invertible affine transformation $\phi:\aff{P}\to\aff{Q}$ such that $q_i = \phi(p_i)$ for all $i \in X$. If $\aff{P}=\aff{Q}$, we say that $\phi$ is direct if it is orientation-preserving.

\bigskip

We write $X\sqcup Y$ to denote the disjoint union of $X$ and $Y$. Given $P \in (\R^d)^X$ and $P' \in (\R^d)^Y$, we write $P \sqcup P'$ for the element in $(\R^d)^{X \sqcup Y}$ that restricts to $P$ and $P'$ on $X$ and $Y$, respectively (under the canonical inclusions of $X$ and $Y$ into $X\sqcup Y$). The next lemma is elementary but turns out to be quite useful.

\begin{lemma}\label{l:extrest}
  Let $\widehat P \in (\R^d)^{X \sqcup Y}$ be an extension of $P \in (\R^d)^X$ and $\widehat Q \in (\R^d)^{X \sqcup Y}$ an extension of $Q \in (\R^d)^X$. If every chirotope realizable on top of $\widehat P$ is realizable on top of $\widehat Q$, then every chirotope realizable on top of $P$ is realizable on top of $Q$.
\end{lemma}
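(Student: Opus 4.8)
The key observation is that a chirotope realizable on top of $P$ is, in particular, the chirotope of some extension of $P$, and we want to upgrade such an extension into one of $\widehat P$ without destroying realizability on the $Q$ side. First I would set up notation: suppose $\chi$ is a chirotope on $X \sqcup Z$ (with $Z$ disjoint from $X \sqcup Y$, which we may assume after relabeling) that is realizable on top of $P$, witnessed by an extension $\widehat P' \in (\R^d)^{X \sqcup Z}$ of $P$ with $\chi_{\widehat P'} = \chi$. The plan is to glue this witness to the given extension $\widehat P$ of $P$ along their common part $P$, obtaining a configuration on $X \sqcup Y \sqcup Z$, then apply the hypothesis, and finally restrict back down.

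Concretely, the glued configuration is $\widetilde P \eqdef \widehat P \sqcup_{P} \widehat P' \in (\R^d)^{X \sqcup Y \sqcup Z}$, the unique configuration restricting to $\widehat P$ on $X \sqcup Y$ and to $\widehat P'$ on $X \sqcup Z$ (these agree on $X$ since both restrict to $P$ there). Then $\widetilde P$ is an extension of $\widehat P$, so its chirotope $\chi_{\widetilde P}$ is realizable on top of $\widehat P$; by hypothesis it is realizable on top of $\widehat Q$, witnessed by some extension $\widetilde Q \in (\R^d)^{X \sqcup Y \sqcup Z}$ of $\widehat Q$ with $\chi_{\widetilde Q} = \chi_{\widetilde P}$. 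Now restrict: $\widetilde Q_{|X \sqcup Z}$ is an extension of $\widetilde Q_{|X} = Q$ (since $\widetilde Q$ extends $\widehat Q$ which extends $Q$), and its chirotope is $(\chi_{\widetilde Q})_{|(X\sqcup Z)^{d+1}} = (\chi_{\widetilde P})_{|(X\sqcup Z)^{d+1}} = \chi_{\widetilde P_{|X \sqcup Z}} = \chi_{\widehat P'} = \chi$, because taking the chirotope commutes with restriction to a subset of the labels. Hence $\chi$ is realizable on top of $Q$, as desired.

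The two facts being used — that the chirotope of a restriction is the restriction of the chirotope, and that extension is transitive and compatible with this gluing — are both immediate from the definitions, so there is essentially no obstacle here; the only point requiring a word of care is the relabeling to ensure $Z \cap (X \sqcup Y) = \emptyset$, which is harmless since realizability on top of a configuration depends only on the isomorphism type of the label set of the added points. I would write the proof in roughly the four lines sketched above, perhaps spelling out the gluing construction as a one-sentence remark since it recurs.
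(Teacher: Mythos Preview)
Your proof is correct and essentially identical to the paper's: both glue the given witness $\widehat P'$ on $X\sqcup Z$ to $\widehat P$ along $P$ to obtain a configuration on $X\sqcup Y\sqcup Z$ extending $\widehat P$, invoke the hypothesis to realize its chirotope on top of $\widehat Q$, and then restrict to $X\sqcup Z$. The paper simply writes the glued configuration as $P'' \eqdef P \sqcup P' \sqcup (\widehat P_{|Y})$ without naming the gluing operation, and does not explicitly comment on the relabeling of $Z$, but the argument is the same.
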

\begin{proof}
  Let $\chi$ be a chirotope realizable on top of $P$ by some point configuration $P \sqcup P' \in (\R^d)^{X \sqcup Z}$. The point configuration $P'' \eqdef P \sqcup P' \sqcup (\widehat P_{|Y})$ is an extension of $\widehat P$. Hence, there exists an extension $Q''$ of $\widehat Q$ such that $\chi_{P''}=\chi_{Q''}$. The point configuration $Q''_{|X \sqcup Z}$ extends $Q = \widehat Q_{|X}$ and its chirotope equals $\chi_{P''_{|X \sqcup Z}} = \chi$.
\end{proof}

\bigskip

Recall that the \defn{cross-ratio} of an ordered quadruple $(p,q,r,s)$ of aligned points in Euclidean space is denoted by $(p,q;r,s)$ and defined as the following quotient of products of signed distances:
\begin{equation}\label{eq:crossratio} (p,q;r,s) = \frac{pr \cdot qs}{qr \cdot ps}.\end{equation}
Also recall that the cross-ratio of an ordered quadruple of four aligned points in projective space is well-defined, as all its Euclidean projections have the same cross-ratio. We use cross-ratios to parameterize projective lines: given a projective line $\ell$ equipped with a projective basis $b = (b_0,b_1,b_\infty)$, for $t \in \R \cup \{\infty\}$ we let $b(t)$ denote the unique point of $\ell$ that satisfies 
\[(b_0,b_\infty ; b(t),b_1)= \frac{b_0b(t) \cdot b_\infty b_1}{b_\infty b(t) \cdot b_0 b_1} = t.\] 
(In particular, $b(0)=b_0$, $b(1) = b_1$ and~$b(\infty)=b_\infty$.)

\section{Tools: Von Staudt constructions}\label{s:vS}

In this section we present a folklore property of point configurations in the plane (Lemma~\ref{l:VonStaudt}). Our proof hinges on the classical constructions of Von Staudt~\cite{Staudt} that translate algebraic operations into point-line incidences. We give here a presentation of these constructions tailored to our needs, and refer to~\cite[Sec.~5.6 \& 8.4]{Richter-Gebert2011} for a more complete exposition.

\subsection{Algebra from line incidences}

We say that two point configurations $P, Q \in (\R^2)^X$ \defn{have the same colinearities} if for every triple of indices $i, j, k \in X$, the points $p_i$, $p_j$ and $p_k$ are colinear if and only if the points $q_i$, $q_j$ and $q_k$ are colinear. Given a point configuration $P \in (\R^2)^X$ and an extension $\widehat P \in (\R^2)^{X \sqcup Y}$ of $P$, we say that~$\widehat P$ is a \defn{constructible extension} of~$P$ if there exists an ordering of $X \sqcup Y$ such that (i) every element of $X$ precedes every element of $Y$, and (ii) no point with label in $Y$ lies on more than two lines spanned by points with lower indexes. 

\bigskip

\begin{figure}[htpb]
\centering
\begin{subfigure}[t]{.43\linewidth}
\centering
\includegraphics[width=\linewidth]{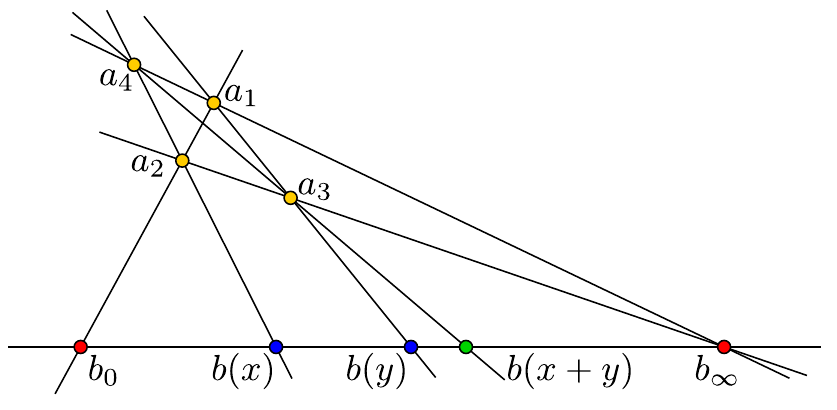}
\caption{Addition}
\end{subfigure}\qquad
\begin{subfigure}[t]{.43\linewidth}
\centering
\includegraphics[width=\linewidth]{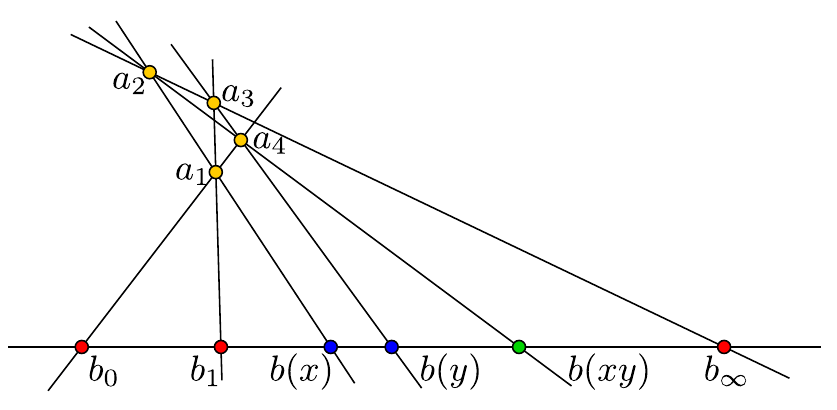}
\caption{Multiplication}
\end{subfigure}
\begin{subfigure}[t]{.43\linewidth}
\centering
\includegraphics[width=\linewidth]{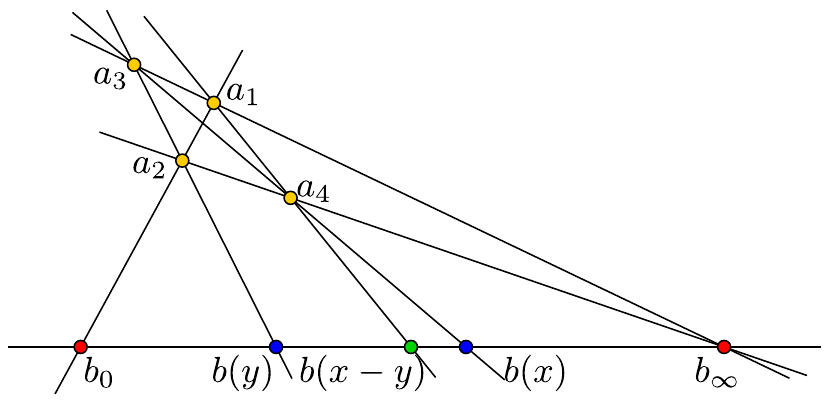}
\caption{Subtraction}
\end{subfigure}\qquad
\begin{subfigure}[t]{.43\linewidth}
\centering
\includegraphics[width=\linewidth]{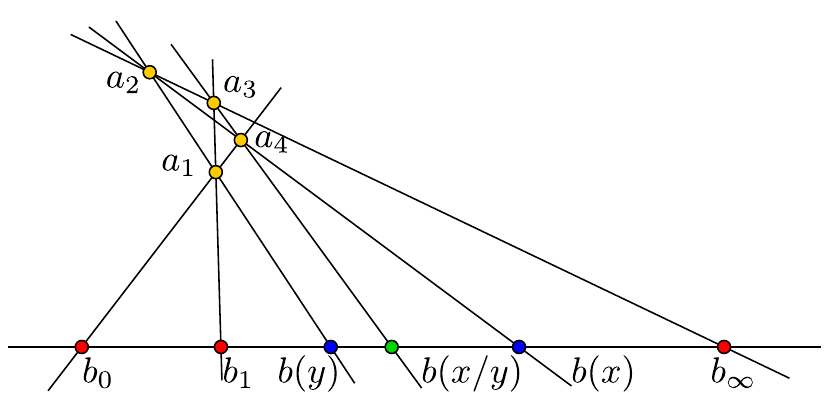}
\caption{Division}
\end{subfigure}

\caption{The construction of Von Staudt. In each configuration, the parameterization is relative to the projective basis $B=(b_0,b_1,b_\infty)$ formed by the red points (in the addition and substraction constructions, the point $b_1$ plays no role so it is omitted). In each construction, the parameters of the blue points are chosen freely and the parameter of the green point is determined by the incidences.
}\label{fig:VonStaudt}
\end{figure}

\noindent
We depict in Figure~\ref{fig:VonStaudt}(a) the classical construction of Von Staudt for the addition, which we now explain. Formally, an \defn{addition gadget} is a configuration of 9 points with the same collinearities as the construction of Figure~\ref{fig:VonStaudt}(a), and that coincides with this construction on $b_0$ and $b_\infty$. Addition gadgets have the following properties:
\begin{description}
\item[Rigidity] For any $x,y \in \R$, if in an addition gadget the blue points are $b(x)$ and $b(y)$, then the green point must be $b(x+y)$.
\item[Genericity] For any countable set $L$ of lines and any countable set $S$ of points outside of $\ell$, and for every $x, y \in \R$, there exists an addition gadget where the blue points are $b(x)$ and $b(y)$, where no yellow point lies on any line of
  $L$, and where no point of $S\setminus \{b_0, b_\infty, b(x),b(y), b(x+y)\}$ is colinear with two yellow points.
\item[Constructibility] An addition gadget is a constructible extension of $\{b_0,b_\infty,b(x),b(y)\}$.
 \end{description}
Let us stress that the construction retains these properties when the points $b(x)$ and $b(y)$ coincide; in that case, the green point is $b(2x)$. Note that for any $x,y \in \R$, we can also use an addition gadget by first setting one blue point to be $b(x)$ and the green point to be $b(y)$, so that the rigidity property implies that the other blue point is $b(x-y)$, thereby obtaining a \defn{subtraction gadget}. We can similarly define a \defn{multiplication gadget} and a \defn{division gadget} where the rigidity property becomes: For any $x,y \in \R$, if the blue points are $b(x)$ and $b(y)$, then the green point must be $b(x\cdot y)$ and $b(x/y)$, respectively. The proofs of all these assertions are standard and we refer for example to Richter-Gebert~\cite[Sec.~5.6 \& 8.4]{Richter-Gebert2011} for the details.

\subsection{A folklore (projective) lemma}

The Von Staudt constructions have been designed to relate algebra and incidence geometry. We use them in an elementary way: we fix a basis $(p_0,p_1,p_\infty)$ of a projective line $\bar \ell$, and build, for any goal~$g \in \Q$, an extension whose colinearities ``determine'' the point~$p(g)$. Our intention is to transport this construction to the affine setting, so we provision for some arbitrary point $f$ of $\ell$ to be ``forbidden'' (it will be the point at infinity in that affine chart). In order to measure the size of this extension, let us define the \defn{arithmetic complexity} of a rational $g$ as the minimum $k$ such that there exists a sequence $(u_0, u_1, u_2, \ldots, u_{k+1})$ of rationals such that
\begin{itemize}
  \item[(a)] $u_0=0$, $u_1 =1$, and $u_{k+1} = g$,
  \item[(b)] for every $2 \le i \le k+1$, there exist integers $0 \le s,t <i$ such that $u_t \neq 0$ and $u_i \in \{u_s+u_t,
  u_s-u_t, u_su_t, u_s/u_t\}$.
\end{itemize}

\begin{lemma}[Von Staudt~\cite{Staudt}]\label{l:VonStaudt}
  Let $P \in (\R^2)^X$ be a point configuration containing three distinct aligned points $p_0$, $p_1$ and $p_\infty$. Let $p(\cdot)$ denote the parameterization of the projective completion of $\aff{\{p_0,p_1\}}$ defined by the projective basis $(p_0,p_1,p_{\infty})$. For any $g \in \Q$ and any $f\in \R\setminus\{0,1,g\}$ such that $p(f) \notin P$, there exists a constructible extension $\widehat P$ of $P$ such that (i) $p(f) \notin \widehat P$, (ii) $p(g) \in \widehat P$, and (iii) any extension of $\{p_0,p_1,p_{\infty}\}$ with the same colinearities as $\widehat P$ contains the point $p(g)$ with the same label as in~$\widehat P$. Moreover, the size of $\widehat P \setminus P$ is at most $10c+15$ where $c$ is the arithmetic complexity of $g$.
\end{lemma}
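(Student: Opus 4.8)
The plan is to realize $p(g)$ by chaining Von Staudt gadgets along an arithmetic-complexity sequence for $g$, so that (iii) comes from the Rigidity property and the size bound from counting gadgets. First I would handle the case $g\in\{0,1\}$, where $p(g)\in\{p_0,p_1\}$ already lies in the base triple and one may take $\widehat P=P$. Otherwise the arithmetic complexity $c$ of $g$ is at least $1$, and I fix a witnessing sequence $(u_0,u_1,\dots,u_{k+1})$ with $u_0=0$, $u_1=1$, $u_{k+1}=g$, $k=c$, each later $u_i$ obtained from two earlier ones by one of $\pm,\cdot,/$. Before using it I would pass to a slightly longer sequence in which no $u_i$ equals $f$, nor any of the finitely many parameters $t$ for which $p(t)$ lies on two or more lines spanned by points of $P$ off the line $\ell\eqdef\aff{\{p_0,p_1\}}$: when $f\notin\Q$ nothing needs to be done since every $u_i$ is rational, and when $f\in\Q$ an elementary combinatorial argument reroutes the computation around these finitely many forbidden rationals at the cost of only a bounded factor in length. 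I also set $p(0)=p_0$, $p(1)=p_1$, and $p(\infty)=p_\infty$ (the last used as $b_\infty$ inside the gadgets).

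Next I would build $\widehat P$ inductively: for $i=2,\dots,k+1$, having already placed $p(u_0),\dots,p(u_{i-1})$ (each of them a base point, a green vertex of an earlier gadget, or --- when its forced location happens to coincide with an existing point --- a point of $P$ which we reuse), I append the Von Staudt gadget of the appropriate type whose blue vertices are $p(u_s)$ and $p(u_t)$. The Rigidity property guarantees that its green vertex is $p(u_i)$, the Constructibility property that the gadget is attached in a constructible way, and I invoke the Genericity property to choose its yellow vertices so that none lies on a line already present or needed later and so that no point already present or needed later becomes collinear with two of them. Then the only collinearities of $\widehat P$ beyond those of $P$ and those internal to the gadgets are the Rigidity-forced incidences of the green vertices $p(u_i)$ with lines of $P$, each green vertex lying on at most one such line by the choice of sequence; ordering the new points gadget by gadget (and, within a gadget, its green vertex before its yellow vertices), one checks the defining condition of a constructible extension --- every new point lies on at most two lines spanned by points with lower indexes, namely its two construction lines for a yellow vertex, and $\ell$ together with at most one further line of $P$ for a green vertex. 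Property (ii) is then immediate, $p(g)=p(u_{k+1})$ being the green vertex of the last gadget, and (i) holds because the only points of $\widehat P\setminus P$ on $\ell$ are the green vertices $p(u_i)$ with $u_i\neq f$, while the yellow vertices lie off $\ell\ni p(f)$ and $p(f)\notin P$ by hypothesis.

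For (iii), let $\widehat P'$ be any extension of $\{p_0,p_1,p_\infty\}$ having the same collinearities as $\widehat P$; since the base triple keeps its position, $\ell$ and the parameterization $p(\cdot)$ are unchanged. Following the construction order (each gadget's yellow vertices, then its green vertex), I would show by induction that every vertex of every gadget sits at its $\widehat P$-position: a yellow vertex because it is the unique intersection of two lines, each spanned by already-correctly-placed points and distinct in $\widehat P'$ (they are distinct in $\widehat P$ and their spanning points are correctly placed); a green vertex by the Rigidity property applied to its gadget, whose blue and base vertices are correctly placed by the induction hypothesis. In particular the point labelled $p(g)$ lies at $p(g)$. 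For the size bound, each of the $O(c)$ gadgets adds its green vertex (unless it is an existing point) together with a bounded number of yellow vertices, its blue and base vertices being reused; summing over the gadgets, and allowing for the rerouting and an $O(1)$ overhead, gives $|\widehat P\setminus P|\le 10c+15$.

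The step I expect to be the genuine obstacle is not any single deduction --- granting the Von Staudt machinery, (iii) is a short induction from Rigidity --- but the combined bookkeeping: arranging that every gadget refers only to the base triple and to previously constructed points (so that the chain of forced positions in the proof of (iii) closes back onto $\{p_0,p_1,p_\infty\}$), simultaneously avoiding the forbidden point $f$ and all accidental collinearities (so that $\widehat P$ is genuinely a constructible extension and does omit $p(f)$), and achieving all this while adding only $O(c)$ points with the stated constants. The Genericity property of the gadgets, together with the freedom in choosing the arithmetic-complexity sequence, is exactly what makes this bookkeeping go through.
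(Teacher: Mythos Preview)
Your strategy matches the paper's: chain Von Staudt gadgets along an arithmetic-complexity sequence for $g$ (modified to avoid $f$), invoke Rigidity for~(iii), and count five points per gadget for the size bound. Two points need correcting.

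\textbf{The argument for (iii) overreaches.} You claim that each yellow vertex is forced as the intersection of two lines through already-correctly-placed points. But the yellow vertices are \emph{not} all determined: the Genericity property explicitly lets them be chosen to avoid any countable family of lines, so at least the first yellow point of each gadget is a free point off $\ell$, lying on no line spanned by earlier points. Your claim would make the whole extension $\widehat P$ rigid over $\{p_0,p_1,p_\infty\}$, which it is not. The correct argument (the paper's) is shorter: induct only on the green vertices $p(u_i)$. Since $p_0,p_1,p_\infty$ are fixed and the blue inputs of the $i$th gadget are earlier green vertices (fixed by induction), the Rigidity property forces the $i$th green vertex to sit at $p(u_i)$ regardless of where the yellow vertices land. (Incidentally, you order each gadget ``green before yellow'' when arguing constructibility but ``yellow before green'' when arguing~(iii); this inconsistency is a symptom of the same issue.)

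\textbf{The size bound is not justified.} The constant $10c+15$ comes from a specific device that the paper spells out: if $f$ occurs in the minimal sequence, replace that occurrence by $-f$ and patch each later use of $f$ by an equivalent use of $-f$, at a total cost of at most $c+3$ extra terms; this gives sequence length $k\le 2c+3$ and hence $5k\le 10c+15$. Your proposal instead reroutes around $f$ \emph{and} a further, $|P|$-dependent collection of parameters (those $t$ with $p(t)$ on two or more lines of $P$ off $\ell$), via an unspecified ``elementary combinatorial argument'' with a ``bounded factor'' in length. That does not yield the stated constants, and with many forbidden values it is not clear it can. Your extra care about green vertices landing on stray lines of $P$ is a legitimate constructibility concern that the paper glosses over, but it should not be folded into the sequence-avoidance step if you want the bound $10c+15$.
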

\begin{proof}
  We first build a sequence $U = (u_0, u_1, u_2, \ldots, u_{k+1})$ of rationals that has properties~(a), (b) and
  \begin{itemize}
  \item[(c)] $u_i \neq f$ for every $0 \le i \le  k+1$.
  \end{itemize}
  Indeed, let $c$ denote the arithmetic complexity of $g$ and let $u_0,u_1,\ldots, u_{c+1}$ be a sequence satisfying the conditions~(a) and~(b) above. If $f$ does not appear in that sequence we are done with $k=c$. Otherwise, we can replace $f$ by $-f$, and replace every use of $f$ to define later terms by a use of $-f$. This increases the sequence length by at most $3$ (to produce $-f$ instead of $f$, which may require to produce the opposite of the two terms $u_s, u_t$ used to compute $f$) plus at most $c$ (to replace each use of $f$ by an equivalent use of $-f$, which may require to produce the opposite of the other term in the operation). Altogether we obtain a sequence as desired with $k \le 3+2c$.

\bigskip

We construct a sequence $\widehat P_1, \widehat P_2, \ldots, \widehat P_{k-1}$ of point configurations, starting with $\widehat P_0 = \widehat P_1 \eqdef P \supseteq \{p(0),p(1),p(\infty)\}$, and each $\widehat P_i$ extending $\widehat P_{i-1}$ so as to construct $p(u_i)$ via a Von Staudt gadget. Formally, we write $\widehat P_\alpha = \{p_i\}_{i\in X_\alpha}$, so $X_1=X$. The sequence satisfies the following properties for every $1 \le \alpha \le k-1$:
  \begin{itemize}
  \item[(a')] for every integer $0 \le j \le \alpha$, there is some $i_j\in X_\alpha$ such that $p_{i_j} = p(u_j)$, 
  \item[(b')] for any extension $Q = \{q_i\}_{i \in X_\alpha}$ of $P$  with the same colinearities as $P_\alpha$, we must have $q_{i_j} = p(u_j)$ for every $0 \le j \le \alpha$,
  \item[(c')] $\widehat P_\alpha$ does not contain $p(f)$,
  \item[(d')] $\widehat P_\alpha$ is a constructible extension of $\widehat P_{\alpha-1}$.
  \end{itemize}
  Note that $\widehat P_1$ is already defined and satisfies (a')-(d'). We describe the construction of $\widehat P_\alpha$ and prove that it has properties (a')-(d') for $\alpha \ge 2$ by induction on $\alpha$. Suppose now that $\widehat P_{\alpha-1}$ has been constructed and satisfies (a')-(d'). By Property~(b) of $U$, there exist integers $0 \le s,t \le \alpha-1$ such that $u_\alpha \in \{u_s+u_t, u_s-u_t, u_su_t, u_s/u_t\}$. Depending on which operation produces $u_\alpha$, we consider an addition, substraction, multiplication or division gadget, with $x = u_s$ and $y=u_t$, so as to construct $p(u_\alpha)$. Here we use the genericity property of the gadgets to make sure that the yellow points do not form any colinearity with the points of $\widehat P_{\alpha-1}$ that are not in the gadget. We then define $\widehat P_\alpha$ as the extension of $\widehat P_{\alpha-1}$ by the yellow and green points of the gadget. Properties~(a') and~(b') are inherited from $\widehat P_{\alpha-1}$ for all $0 \le j \le \alpha -1$. For $j=\alpha$, they follow from the rigidity property of the gadget for $i_\alpha$ the index of the green point added to $P_\alpha$. Property~(c') follows from Property~(c) of $U$. And property~(d') follows from the constructibility property of each gadget, and the fact that, by genericity, their points do not add colinearities with the other points that could break constructibility. 

  \bigskip

  By induction, $\widehat P_\alpha$ is defined and satisfies (a')-(d') for every $1 \le \alpha \le k$. We let $\widehat P \eqdef \widehat P_{k-1}$ and claim that it has the desired properties. Indeed, $\widehat P$ is a constructible extension of $P$ because the sequence $\{\widehat P_\alpha\}_{1 \le \alpha \le k-1}$ satisfies~(d'), and a constructible extension of a constructible extension of $P$ is again a constructible extension of $P$. Moreover, property~(i) for $\widehat P$  follows from property~(c') for $\widehat P_{k-1}$, property~(ii) for $\widehat P$  follows from $u_{k-1}=g$ and property~(a') for $\widehat P_{k-1}$, and property~(iii) for $\widehat P$ follows from property (b') for the sequence $\{\widehat P_\alpha\}_{1 \le \alpha \le k-1}$. It is clear from Figure~\ref{fig:VonStaudt} that each extension step adds 5 points (the four yellow points and the green one), so the size of $\widehat P$ is exactly the size of $P$ plus $5k$.
\end{proof}

\section{A planar extension that can only be realized on top of ``affinely close'' point sets}\label{s:basic}

With Lemma~\ref{l:VonStaudt} at our fingertips, we can essentially
give a version of Theorem~\ref{t:maineps} for $d=2$ that allows
nongeneric extensions.

\begin{lemma}\label{l:asym2witness}
  For any three distinct aligned points $p_0$, $p_1$, $p$ in $\R^2$, there exists a constant $c$ such that the following holds: For any point configuration $P \in (\R^2)^X$ containing $p_0$, $p_1$ and $p$, and for every $\varepsilon>0$, there exists a finite constructible extension $\widehat P$ of $P$, with $\widehat P \setminus P$ of size at most $20\log\frac1{\varepsilon}+c$, such that the chirotope of $\widehat P$ cannot be realized on top of $\{p_0,p_1,q\}$ for any $q$ with $\|p-q\|_2>\varepsilon$.
\end{lemma}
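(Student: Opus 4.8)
The plan is to use Lemma~\ref{l:VonStaudt} to pin down, via colinearities, a point on the line $\ell=\aff{\{p_0,p_1\}}$ that is ``far'' from $p$ in a controlled projective sense, and then translate this into the desired Euclidean rigidity. Concretely, fix the projective basis $(p_0,p_1,p_\infty)$ on $\ell$, where $p_\infty$ is a point we adjoin on $\ell$ distinct from $p_0,p_1,p$, chosen so that $p$ has a finite nonzero parameter; equivalently, let $g\in\R$ be the parameter of $p$, i.e.\ $p=p(g)$. Since we want an extension with rational Von Staudt data, first I would approximate $g$ by a rational $g^*$ with arithmetic complexity $O(\log\frac1\varepsilon)$ such that $|g-g^*|$ is much smaller than $\varepsilon$ (or even exactly $0$ after reparameterizing, but in general $g$ is irrational so one must approximate). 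Then Lemma~\ref{l:VonStaudt} gives a constructible extension $\widehat P$ of $P$ of size $|P|+O(\log\frac1\varepsilon)$ whose colinearities force the point labeled by the green point to be $p(g^*)$ in \emph{any} realization of $\chi_{\widehat P}$ on top of a configuration agreeing with $(p_0,p_1,p_\infty)$.

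The key point is the asymmetric setup demanded by the statement: we are only allowed to constrain $q$, the image of $p$, keeping $p_0$ and $p_1$ fixed; we do not get to fix $p_\infty$. So the argument must be: if $\chi_{\widehat P}$ is realized on top of $\{p_0,p_1,q\}$, then the realization restricts $p_\infty$ to be \emph{some} point $p_\infty'$ on the line through $p_0,p_1$ (it must lie on that line because the chirotope records which of the adjoined points are colinear with $p_0,p_1$), and with respect to the basis $(p_0,p_1,p_\infty')$ the green point still has parameter $g^*$ by Lemma~\ref{l:VonStaudt}(iii). Now I would use the fact that cross-ratios are preserved: the cross-ratio $(p_0,p_\infty; p, p_1)$ in the original configuration equals the cross-ratio $(p_0,p_\infty'; q, p_1)$ only if these are ``the same'' projective datum — but $p_\infty'$ is a free parameter, so what is really pinned down is the \emph{position of $q$ relative to $p_0$ and $p_1$ as measured against the green point}. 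The cleanest route: the extension $\widehat P$ should contain \emph{two} Von Staudt subgadgets producing two distinct rational parameters $g_1^*,g_2^*$ (say approximating $g$ and approximating some other fixed rational like $2$), so that in any realization the images of these two green points, together with $p_0,p_1$, fix the projective basis completely; then $q$, being colinear with $p_0,p_1$ (forced by $\chi_{\widehat P}$) and having a prescribed parameter $g^*$ in that now-rigid basis, is determined up to the affine gauge that fixes $p_0,p_1$. Since an affine map of $\R^2$ fixing $p_0\neq p_1$ pointwise is the identity on $\ell$, this forces $q=p(g^*)$ exactly, whence $\|p-q\|_2=|p-p(g^*)|\le\varepsilon$ by our choice of $g^*$.

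The size bookkeeping is routine: two applications of Lemma~\ref{l:VonStaudt} cost $5k_1+5k_2$ extra points with $k_i\le 3+2c_i$ and $c_i=O(\log\frac1\varepsilon)$ the arithmetic complexity of the dyadic (or continued-fraction) rational approximation of $g$; choosing a binary expansion of $g^*$ to $\lceil\log_2\frac2\varepsilon\rceil$ bits gives $c_1\le\log_2\frac1\varepsilon+O(1)$, and similarly $c_2=O(1)$, so $|\widehat P\setminus P|\le 20\log\frac1\varepsilon+c$ after absorbing constants — matching the claimed bound once one is slightly careful that the Von Staudt ``$10c+15$'' from Lemma~\ref{l:VonStaudt} and the conversion from the arithmetic complexity to the number of points match up with the factor $20$.

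The main obstacle I expect is the asymmetry of the roles of $p$ versus $(p_0,p_1)$ together with the fact that $p_\infty$ is not among the fixed points: one has to argue that although a realization of $\chi_{\widehat P}$ on top of $\{p_0,p_1,q\}$ may place the ``point at infinity'' $p_\infty$ anywhere on $\ell$, the Von Staudt rigidity still pins $q$ down, and this requires the second gadget (or some equivalent device) to eliminate the one remaining degree of projective freedom along $\ell$. A secondary subtlety is ensuring that the approximation $g^*$ of $g$ can be chosen with $f$-avoidance (the ``forbidden'' point, which here should be the point at infinity of the affine line, i.e.\ the parameter value $\infty$, automatically excluded since $g^*$ is finite) and with all genericity requirements of the gadgets compatible with the finitely many colinearity constraints already present in $P$ — but this is exactly what the genericity clause of the Von Staudt gadgets, used through Lemma~\ref{l:VonStaudt}, is designed to handle.
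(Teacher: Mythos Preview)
Your plan has a genuine gap. You introduce an auxiliary point $p_\infty$ on $\ell$, distinct from $p_0,p_1,p$, and use $(p_0,p_1,p_\infty)$ as the projective basis for the Von Staudt gadgets. But in a realization of $\chi_{\widehat P}$ on top of $\{p_0,p_1,q\}$, the image $p_\infty'$ of $p_\infty$ is essentially a free point on $\ell$: the chirotope only pins down its order relative to the finitely many other points on $\ell$, not its position. Lemma~\ref{l:VonStaudt}(iii) then tells you that your green points sit at parameters $g_1^*,g_2^*$ in the basis $(p_0,p_1,p_\infty')$, but since $p_\infty'$ is free, their Euclidean positions are free as well. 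Your proposed fix --- a second gadget at a fixed rational like $2$ --- does not eliminate this freedom: it produces one more point whose position is again only determined relative to the same unknown $p_\infty'$. The sentence ``an affine map of $\R^2$ fixing $p_0\neq p_1$ pointwise is the identity on $\ell$'' is true but irrelevant, because a realization of the chirotope is not a priori related to $\widehat P$ by any affine (or projective) map. Concretely, the projective automorphism of $\bar\ell$ fixing $p_0,p_1$ and sending $p$ to $q$ transports your whole Von Staudt data to a configuration with the \emph{same} cross-ratios and colinearities; nothing in your construction detects this.

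The paper's key move, which you are missing, is to take $p$ itself as the third point of the projective basis, i.e.\ to work in the basis $(p_0,p_1,p)$. Then replacing $p$ by $q$ actually changes the basis to $(p_0,p_1,q)$, and the two green points, chosen at rational parameters $g_-<g_+$ that tightly bracket the parameter $\omega(p)$ of the affine point at infinity $\Omega$, land at $b(p,g_-),b(p,g_+)$ on \emph{opposite} sides of $p_0$ in the original, but at $b(q,g_-),b(q,g_+)$ on the \emph{same} side of $p_0$ whenever $\|p-q\|_2>\varepsilon$ (since then $\omega(q)\notin[g_-,g_+]$). That sign flip, read off against any point $p_2\notin\ell$, is the chirotope contradiction. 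Your size bookkeeping is fine in spirit --- two Von Staudt constructions of arithmetic complexity $O(\log\frac1\varepsilon)$ --- but the targets should be $g_-,g_+$ bracketing $\omega(p)$, not an approximation of the parameter of $p$ in an auxiliary basis.
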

\begin{proof}
  Let us fix $\varepsilon>0$. We let $\ell$ denote the line $\aff{\{p_0,p_1\}}$. 
  We start with some simplifying observations:
\begin{itemize}
\item we can assume that $p\notin \{p_0,p_1\}$, as otherwise the statement is immediate since $\chi_P$ itself cannot be realized on top of $\{p_0,p_1,q\}$ whenever $\|p-q\|_2>0$,
\item we can restrict our attention to $q \in \ell$, as otherwise $\chi_P$ itself cannot be realized on top of $\{p_0,p_1,q\}$,
\item we can further assume that $q\notin \{p_0,p_1\}$, as otherwise $\chi_{P'}$ cannot be realized on top of $\{p_0,p_1,q\}$, where $P'$ is the extension of $P$ with the midpoints of the segments $p_0p$ and $p_1p$ added,
\item the roles of $p_0$ and $p_1$ are symmetric so far, and the ratio of signed distances satisfy $\frac{pp_1}{p_0p_1} + \frac{pp_0}{p_1p_0} = 1$; thus, up to exchanging the roles of $p_0$ and $p_1$  we can assume that the ratio of signed distances $\frac{pp_1}{p_0p_1}$ is not in $(-\frac12,\frac12)$.
\end{itemize}
So we now set out to construct a finite extension of $P'$ whose chirotope cannot be realized on top of $\{p_0,p_1,q\}$ for any $q\in \ell\setminus \{p_0,p_1\}$ with $\|p-q\|_2\geq \varepsilon$. 

\bigskip

We embed the affine plane $\R^2$ into the projective plane $\Rp^2$ and denote by $\bar\ell$ the projective completion of~$\ell$. We write $\Omega$ for the point of $\bar\ell\setminus\ell$, that is the point of $\ell$ at infinity. For any point $x\in \ell\setminus \{p_0,p_1\}$ we put $b(x) \eqdef (p_0,p_1,x)$. Each triple $b(x)$ is a projective basis of $\bar \ell$ and determines a parametrization of $\bar\ell$ by $\R\cup\{\infty\}$. For $x \in \ell\setminus \{p_0, p_1\}$, we denote by $b(x,t)$ the point of $\bar\ell$ with parameter $t$ in the basis $b(x)$, that is the point such that the cross-ratio $(p_0,x;b(x,t),p_1)$ equals~$t$. Note that in particular, we have $b(x,\infty)=x$, $b(x,1)=p_1$, and $b(x,0)=p_0$ for every $x \in \ell\setminus \{p_0, p_1\}$. We define $\omega(x) = (p_0,x;\Omega,p_1)$, so that $\Omega =b(x,\omega(x))$. 
Replacing $p,q,s$ by $p_0,x,p_1$ and taking the limit $r\to \infty$ in \eqref{eq:crossratio}, we get $\omega(x)=\frac{xp_1}{p_0p_1}$, to be understood as a ratio of signed distances. In particular, $\omega(p) \notin (-\frac12,\frac12)$ and for every point $z\in \ell\setminus \{p_0,p_1\}$ we have $|\omega(p)-\omega(z)|=\frac{\|p-z\|_2}{\|p_0-p_1\|_2}$. We consider the interval
\[I \eqdef \{\omega(z) \colon z \in \ell\setminus\{0,1\} \text{ s.t. } \|p-z\|_2\le\varepsilon\} = \ptb{\omega(p)-\frac{\varepsilon}{\|p_0-p_1\|_2},\omega(p)+\frac{\varepsilon}{\|p_0-p_1\|_2}}\]
that is the image under $\omega$ of the points $q$ that we do not need to control.

\bigskip

We next choose two rationals $g_-<\omega(p)< g_+$ such that $0 \notin [g_-,g_+] \subseteq I$. (We spell out the specific choice of $g_-$ and $g_+$ in the very last step of the proof.) So consider any point $q \in \ell\setminus\{p_0,p_1\}$ with $\|p-q\|_2>\varepsilon$. Since the map $x\mapsto \omega(x)$ is a homeomorphism between $\ell\setminus\{p_0,p_1\}$ and $\R\setminus \{0,1\}$, we have the situation depicted in Figure~\ref{fig:separation}. In particular, since $b(p,\omega(p))=\Omega$, in the affine line $\ell=\bar \ell\setminus \Omega$, the points $b(p,g_-)$ and $b(p,g_+)$ are on different sides of $b(p,0)=p_0$, whereas the points $b(q,g_-)$ and $b(q,g_+)$ are on the same side of $b(q,0)=p_0$. Altogether, when considering the full plane we have, for any choice of $p_2 \in \R^2 \setminus \ell$ and $q_2 \in \R^2 \setminus \ell$,
\begin{equation}\label{eq:chirobc2}
  \chi(p_0,p_2,b(p,g_-)) \neq \chi(p_0,p_2,b(p,g_+)) \quad \text{and} \quad \chi(p_0,q_2,b(q,g_-)) = \chi(p_0,q_2,b(q,g_+)).
\end{equation} 

\begin{figure}[htpb]
  \centering
  \includegraphics[width=.95\linewidth]{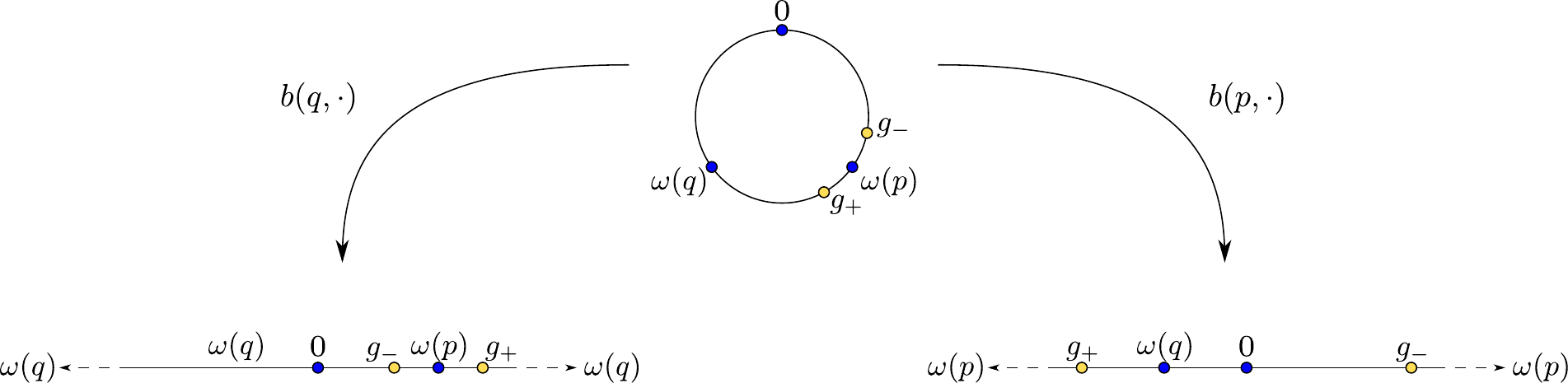}
  \caption{The maps $b(q,\cdot)$ and $b(p,\cdot)$ map $\R\cup\{\infty\}$ to the projective line $\bar \ell$. Restricted respectively to $\R\cup\{\infty\}\setminus \{\omega(q)\}$ and $\R\cup\{\infty\}\setminus \{\omega(p)\}$, they provide a map to the affine line $\ell$ (with the image of $\infty$ being respectively $p$ and $q$). This figure represents these maps schematically.
  \label{fig:separation}}
\end{figure}

\bigskip

To construct $\widehat P$ it suffices to use Lemma~\ref{l:VonStaudt} twice. First, we build  a finite constructible extension~$\widehat P^-$ of~$P$ that avoids $\Omega$ and such that any realization of~$\widehat P^-$ on top of $\{p_0,p_1,p\}$ contains $b(p,g_-)$, with the same label. (Specifically, we apply Lemma~\ref{l:VonStaudt} with parameters $(g,f, p_\infty, p_0, p_1)$ set to our current $(g_-,\omega(p), p, p_0,p_1)$.) Second, we build similarly a finite constructible extension~$\widehat P$ of~$\widehat P^-$ that avoids~$\Omega$ and such that any realization of~$\widehat P$ on top of $\{p_0,p_1,p\}$ contains~$b(p,g_+)$, with the same label. Altogether, $\widehat P$ is a finite constructible extension of~$P$, and any realization of~$\widehat P$ on top of $\{p_0,p_1,p\}$ contains $b(p,g_-)$ and $b(p,g_+)$ with the same labels. Let~$m_-$ and~$m_+$ denote these labels.

\bigskip

We claim that $\widehat P$ cannot be realized on top of $\{p_0,p_1,q_2,q\}$ for any $q_2 \in \R^2$ and any $q \in \ell\setminus\{p_0,p_1\}$ such that $\|p-q\|_2>\varepsilon$. Suppose by contradiction that such a realization $\widehat Q$ exists. Since $p_2 \notin \aff{\{p_0,p_1\}}$, we can assume that $q_2 \in \R^2 \setminus \ell$. Let $\bar\phi$ denote the unique projective transformation of $\bar\ell$ that fixes~$p_0$ and~$p_1$, and maps~$q$ to~$p$. Hence, $b(p,t) = \bar\phi(b(q,t))$ for every $t \in \R\cup\{\infty\}$. Extend $\bar\phi$ to a projective transform~$\phi$ of~$\Rp^2$ that fixes~$p_0$ and~$p_1$, and maps~$q_2$ to~$p_2$ and~$q$ to~$p$. The point configuration $\phi(\widehat Q)$ is an extension of $P$ by definition of $\phi$. Moreover, $\phi(\widehat Q)$ has the same colinearities as $\widehat P$ since $\chi_{\widehat Q} = \chi_{\widehat P}$ and $\phi$ preserves colinearities. This implies, by Property~(iii) of Lemma~\ref{l:VonStaudt}, that $\phi(\widehat Q)$ contains $b(p,g_-)$ and $b(p,g_+)$ with labels $m_-$ and $m_+$.\footnote{The careful reader will rightfully wonder what happens if $\phi$ maps some point of $\widehat Q$ to infinity. This can be taken care of by observing that the following strengthening of Property~(iii) of Lemma~\ref{l:VonStaudt} actually holds: any point configuration $\tilde P \in (\Rp^2)^Y$ that contains $\{p_0,p_1,p_{\infty}\}$ and has the same colinearities as $\widehat P$ must contain the point $p(g)$ with the same label as in~$\widehat P$. Indeed, the Von Staudt constructions are inherently projective. This is the only place in the paper where we need this strengthening, so we chose for the sake of the clarity to keep the presentation affine.} The pre-images under $\phi$, in $\widehat Q$, of these points are then $b(q,g_-)$ and $b(q,g_+)$, respectively. It follows from Equation~\eqref{eq:chirobc2} that
\[\begin{aligned}
\chi_{\widehat Q}(0,2,{m_-}) = \chi(p_0,q_2,b(q,g_-)) & = \chi(p_0,q_2,b(q,g_+)) = \chi_{\widehat Q}(0,2,{m_+})\\
\chi_{\widehat P}(0,2,{m_-}) = \chi(p_0,p_2,b(p,g_-)) & \neq \chi(p_0,p_2,b(p,g_+)) = \chi_{\widehat P}(0,2,{m_+}) \end{aligned}\]
contradicting the assumption that $\chi_{\widehat Q} = \chi_{\widehat P}$.

\bigskip

It remains to bound from above the size of $\widehat P \setminus P$. By Lemma~\ref{l:VonStaudt}, this size is governed by the arithmetic complexities of $g_-$ and $g_+$. Let us give an algorithm that produces suitable $g_-$ and $g_+$ and allows bounding their arithmetic complexities from above. We proceed in three steps:
\begin{itemize}
\item[1.] Let $a$ be the nearest integer rounding of $\omega(p) = \frac{pp_1}{p_0p_1}$ (furthest away from $0$ in case of ties). Since $\omega(p) \notin (-\frac12,\frac12)$ we have $a \neq 0$. We start our sequence by computing $a$. The number of terms is at most the  arithmetic complexity of $a$, which we denote by $c(p_0,p_1,p)$.
\item[2.] We repeatly bisect the interval $[a-1/2, a+1/2]$ in the middle, each time keeping the half that contains $\omega(p)$, until we get an interval $[a',b']$ of length at most $\frac{\varepsilon}{\|p_0-p_1\|_2}$. During the bisection, we extend the sequence by two numbers at each step: the length of the interval (initially $1$, halved at each step) and the new endpoint (obtained by adding/subtracting the updated length to/from one endpoint). This increases the length of our sequence by $1$ (to produce the integer $2$) plus $2 \cdot \lceil\log\frac{\|p_0-p_1\|_2}{\varepsilon}\rceil$.
\item[3.] If $\omega(p)$ is on the boundary of that interval, we again halve the length of the interval and add/substract it to/from an endpoint to put $\omega(p)$ back in the interior. If needed, this step increases the length of the sequence by at most~$2$.
\end{itemize}
We set $g_-$ and $g_+$ to be the endpoints of the resulting interval. That interval does not contain $0$ (since $a\neq 0$) and is contained in $I$ (since it contains $\omega(p)$ and has length at most $\frac{\varepsilon}{\|p_0-p_1\|_2}$). This sequence has length at most $c(p_0,p_1,p)+2\lceil\log\frac{\|p_0-p_1\|_2}{\varepsilon}\rceil+3$, which serves as an upper bound on the arithmetic complexity of $g_-$ and $g_+$. The same sequence can be used to construct both numbers, and therefore this bounds the number of Von Staudt gadgets. Altogether, $\widehat P \setminus P$ has size at most $20 \log \frac1{\varepsilon} + \pth{2+45 + 10c(p_0,p_1,p) +20\log \|p_0-p_1\|_2}$, where $c(p_0,p_1,p)$ is the arithmetic complexity of the nearest rounding of $\frac{pp_1}{p_0p_1}$ (away from $0$ in case of ties). (This also accounts for the midpoints of $p_0p$ and $pp_1$ added in the initial remarks.)
\end{proof}

\begin{remark}\label{r:perturb}
Note that if we move $p$ within distance $\|p_0-p_1\|_2$ of its initial position, the integer rounding of $\frac{pp_1}{p_0p_1}$ changes by at most~1, which increases the arithmetic complexity of $g_-$ and $g_+$ by at most~$1$ (in step 1, the only change). It follows that for any point $\tilde p$ such that $\|p-\tilde p\|_2 \le \|p_0-p_1\|_2$ we have $c(p_0,p_1,\tilde p) \le c(p_0,p_1,p)+10$.
\end{remark}

\section{Tools: Point configurations of corank $1$}\label{s:corank1}

A point configuration has \defn{corank $1$} if its cardinality exceeds the dimension of its affine span by exactly~$2$ ({\em e.g.} $4$ nonaligned points in $\R^2$, $5$ noncoplanar points in $\R^3$, \ldots). These correspond to the point sets supporting a minimal Radon partition (although we will merely use their affine properties here), and the terminology is motivated by the fact that their dual oriented matroid is of rank~$1$. In this section we spell out a few properties of configurations of corank~$1$ used in the proof of Theorem~\ref{t:mainog}.

\bigskip

The first property reduces the general case of Theorem~\ref{t:mainog} to the special case of configurations of corank~$1$.

\begin{lemma}\label{l:to-standard}
  Let $d \ge 2$ and let $X$ be a set of size at least $d+2$. If $P, Q \in (\R^d)^X$ are full dimensional point configurations in $\R^d$ that are not directly affinely equivalent, then there exists $Y \subseteq X$ of size $d+2$ such that $P_{|Y}$ has corank~$1$ and is not directly affinely equivalent to $Q_{|Y}$.
\end{lemma}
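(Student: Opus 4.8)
I would prove the contrapositive: assuming that for \emph{every} $Y \subseteq X$ of size $d+2$ for which $P_{|Y}$ has corank~$1$ the configurations $P_{|Y}$ and $Q_{|Y}$ are directly affinely equivalent, I will derive that $P$ and $Q$ are directly affinely equivalent. The starting point is the observation that, for a set $Y$ with $|Y| = d+2$, having corank~$1$ is the same as $P_{|Y}$ being full-dimensional, i.e.\ $\aff{P_{|Y}} = \R^d$; in particular such a $P_{|Y}$ contains $d+1$ affinely independent points (a spanning simplex). Conversely, whenever $S \subseteq X$ indexes a spanning simplex of $P$ --- which exists since $P$ is full-dimensional --- the set $S \cup \{j\}$ has size $d+2$ and corank~$1$ for every $j \in X \setminus S$, so the hypothesis applies to it. I also note that whenever $P_{|Y}$ (of corank~$1$ and size $d+2$) is directly affinely equivalent to $Q_{|Y}$, the latter is full-dimensional too, so ``direct'' here just means orientation-preserving as an affine automorphism of $\R^d$; such automorphisms form a group, closed under composition and inverse.

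Next I would fix one set $S = \{i_0, \dots, i_d\} \subseteq X$ indexing a spanning simplex of $P$, and fix some $j_0 \in X \setminus S$ (possible since $|X| \ge d+2$). Applying the hypothesis to $Y = S \cup \{j_0\}$ yields a direct affine automorphism $\phi$ of $\R^d$ with $\phi(p_i) = q_i$ for all $i \in S \cup \{j_0\}$. The heart of the argument is then a rigidity observation: for an arbitrary $j \in X \setminus S$, the hypothesis applied to $Y = S \cup \{j\}$ yields a direct affine automorphism $\psi$ of $\R^d$ with $\psi(p_i) = q_i$ for all $i \in S \cup \{j\}$; since $\psi(p_i) = q_i = \phi(p_i)$ for every $i \in S$, the affine map $\phi^{-1} \circ \psi$ fixes the affinely independent points $p_{i_0}, \dots, p_{i_d}$, which affinely span $\R^d$, and hence equals the identity. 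Therefore $\psi = \phi$ and $q_j = \psi(p_j) = \phi(p_j)$.

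Combining both cases, $q_i = \phi(p_i)$ for every $i \in X$ (by definition of $\phi$ when $i \in S$, and by the previous paragraph when $i \in X \setminus S$), so $\phi$ is a direct affine equivalence between $P$ and $Q$, which is the desired contradiction, and the lemma follows.

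I do not anticipate a genuine obstacle here: the argument is elementary linear algebra. The only points needing care are keeping track of the meaning of ``direct'' (this is why it is convenient that every $Y$ under consideration is full-dimensional, so that all maps involved are honest orientation-preserving automorphisms of $\R^d$), and recognizing that it suffices to test a single fixed spanning simplex $S$ together with one further point at a time rather than all corank~$1$ subsets simultaneously.
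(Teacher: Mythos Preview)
Your proof is correct and follows essentially the same approach as the paper's: fix a spanning simplex $S$ of $P$, use the unique affine map determined on $S$, and check it works for every remaining index. The paper argues directly (finding an index $k$ where $\phi$ fails) while you take the contrapositive, and you extract $\phi$ from the hypothesis applied to $S\cup\{j_0\}$ rather than defining it outright on $S$ and treating separately the case where $Q_{|S}$ is not a positively oriented basis, but these are cosmetic differences.
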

\begin{proof}
  Since $P$ is full dimensional, it contains an affine basis $p_{i_0},\dots,p_{i_d}$. Let $Y_0 = \{i_0,i_1, \ldots, i_d\}$. For any one-element extension $Y \supset Y_0$, the restriction $P_{|Y}$ has corank~$1$. Now, if $Q_{|Y_0}$ is not a basis with the same orientation as $P_{|Y_0}$, then adding any element of $X\setminus Y_0$ to $Y_0$ produces a suitable set $Y$. Otherwise, let $\phi$ be the unique affine transformation such that $q_{i_j}=\phi(p_{i_j})$ for $0\leq j \leq d$. Observe that $\phi$ must be direct. If $P$ and $Q$ are not directly affinely equivalent, there must be some $k\in X$ such that $q_{k}\neq \phi(p_k)$, and thus $Y \eqdef Y_0 \cup \{k\}$ is a suitable set.
\end{proof}

The analysis of degeneracies will use the following elementary fact from oriented matroid theory:

\begin{lemma}\label{l:degenwitness}
  If $P$ is a point configuration of corank~$1$, then every point $p \in P$ lies in at most one inclusion-minimal flat spanned by points of $P\setminus \{p\}$.
\end{lemma}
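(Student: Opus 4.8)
The plan is to exploit that a corank-$1$ configuration carries an essentially unique affine dependence, and to reduce the whole statement to a combinatorial condition on the support of that dependence.

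First I would set $k \eqdef \dim\aff{P}$, so that $|P| = k+2$, and record that the space of affine dependences of $P$ — the vectors $(\lambda_i)_i \neq 0$ with $\sum_i \lambda_i p_i = 0$ and $\sum_i \lambda_i = 0$ — is one-dimensional. Fixing a generator, let $Z \subseteq P$ be its support (the unique circuit; note $|Z|\ge 2$). The structural observation driving the proof is that for any $S \subseteq P$, the subconfiguration $S$ is affinely dependent if and only if $Z \subseteq S$, and in that case its space of affine dependences is again one-dimensional; hence $\dim\aff{S} = |S|-1$ when $Z \not\subseteq S$, and $\dim\aff{S} = |S|-2$ when $Z \subseteq S$.

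Next I would use these dimension formulas to pin down, for $S \subseteq P\setminus\{p\}$, exactly when $p \in \aff{S}$. Since $p \notin S$, comparing $\dim\aff{S}$ with $\dim\aff{S\cup\{p\}}$ through the formula above shows that $p \in \aff{S}$ holds if and only if $p \in Z$ and $Z\setminus\{p\} \subseteq S$: if $p\notin Z$ the dimension strictly increases when adjoining $p$; if $Z \subseteq S$ it also strictly increases; and the only remaining possibility, $Z = (Z\cap S)\sqcup\{p\}$, is precisely the case where it does not increase. Then I would conclude: if $p\notin Z$, no flat spanned by a subset of $P\setminus\{p\}$ contains $p$ and the claim is vacuous; if $p \in Z$, set $S_0 \eqdef Z\setminus\{p\} \subseteq P\setminus\{p\}$, so that $p \in \aff{S_0}$, while every $S \subseteq P\setminus\{p\}$ with $p\in\aff{S}$ satisfies $S \supseteq S_0$ and hence $\aff{S}\supseteq\aff{S_0}$. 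Thus $\aff{S_0}$ is contained in every flat spanned by a subset of $P\setminus\{p\}$ that passes through $p$, so it is the unique inclusion-minimal such flat.

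The step requiring the most care is the equivalence in the previous paragraph together with the treatment of genuinely degenerate configurations (repeated points, so that $|Z|$ is small, or $p$ lying outside the circuit); the argument above already covers these uniformly, and I do not anticipate a substantive obstacle.
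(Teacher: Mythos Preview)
Your proof is correct. The approach, however, differs from the paper's. You work matroid-theoretically: you isolate the unique circuit $Z$ of the corank-$1$ configuration, derive the dimension formula $\dim\aff{S}=|S|-1-\mathbf{1}[Z\subseteq S]$, and from it obtain the explicit characterization that $p\in\aff{S}$ (for $S\subseteq P\setminus\{p\}$) holds exactly when $p\in Z$ and $S\supseteq Z\setminus\{p\}$; this immediately names the unique minimal flat $\aff{Z\setminus\{p\}}$. The paper instead argues geometrically: it observes that if removing $p$ drops the dimension then $p$ lies on no flat spanned by $P\setminus\{p\}$, and otherwise $P\setminus\{p\}$ is affinely independent, so flats spanned by its subsets are closed under intersection ($\aff{S_1}\cap\aff{S_2}=\aff{S_1\cap S_2}$), which forces any two minimal flats through $p$ to coincide. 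Your two ``cases'' $p\notin Z$ versus $p\in Z$ correspond exactly to the paper's two cases, but the mechanism inside each case is different. Your argument has the advantage of identifying the minimal flat explicitly; the paper's is a one-line appeal to the lattice property and is marginally shorter. One expository quibble: in your case breakdown, the clause ``if $Z\subseteq S$ it also strictly increases'' is redundant (it is already subsumed by $p\notin Z$, since $p\notin S$), and you silently skip the sub-case $p\in Z$, $Z\setminus\{p\}\not\subseteq S$; the final ``if and only if'' is nonetheless correct.
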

\begin{proof}
  Let $p \in P$. If $\aff{P\setminus\{p\}}$ is $(d-1)$-dimensional then $p$ lies on no flat spanned by $P\setminus\{p\}$. Otherwise, $P\setminus\{p\}$ is affinely independent, and the statement follows from the observation that the intersection of any two flats spanned by points of $P\setminus\{p\}$ is a flat spanned by points of $P\setminus\{p\}$.
\end{proof}

Let $P \in (\R^d)^X$ be a full-dimensional point configuration of corank~$1$. We call a pair $I \subset X$ \defn{good} for $P$ if it satisfies three conditions: $\aff{P_{|(X\setminus I)}}$ is a hyperplane, $\aff{P_{|I}}$ is a line, and this line is not parallel to this hyperplane. If $I$ is good for $P$, then the line $\aff{P_{|I}}$ intersects the hyperplane $\aff{P_{|(X\setminus I)}}$ in a single point, which we denote $p_I$. When $I$ is good for $P$, we define  $P_{\downarrow I} \eqdef P_{|(X\setminus I)} \sqcup \{p_I\}$ and note that if $P$ has corank~$1$, then $P_{\downarrow I}$ also has corank~$1$ and has one point less than $P$. See Figure~\ref{fig:good} for an example.

\begin{figure}[htpb]
  \centering
  \includegraphics[width=.7\linewidth]{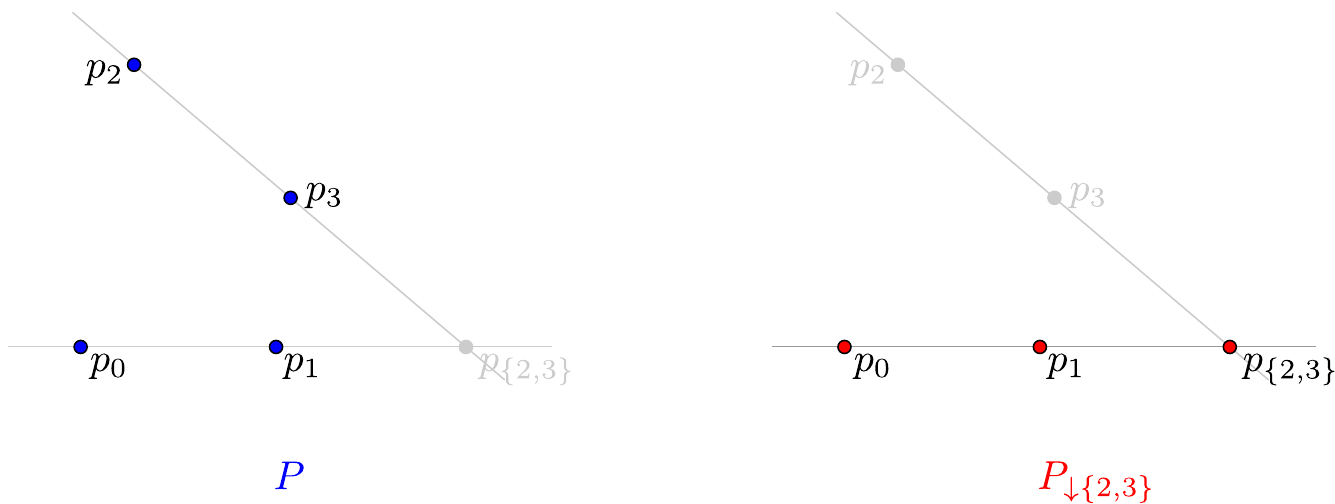}
  \caption{The pair $\{2,3\}$ is good for this configuration, which allows for the construction of $P_{\downarrow \{2,3\}}$.}\label{fig:good}
\end{figure}

\begin{lemma}\label{l:good}
  Let $d\ge 2$, let $P\in (\R^d)^X$ be a full-dimensional point configuration with corank~$1$, and $I \subset X$ a good pair for $P$. If $Q\in (\R^d)^X$ is a point configuration such that $P$ and $Q$ have the same extensions, then $Q$ has corank~$1$, $I \subset X$ is good for $Q$, and $P_{\downarrow I}$ and $Q_{\downarrow I}$ have the same extensions.
\end{lemma}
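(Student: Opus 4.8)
The plan is to prove the three conclusions — that $Q$ has corank~$1$, that $I$ is good for $Q$, and that $P_{\downarrow I}$ and $Q_{\downarrow I}$ have the same extensions — one at a time, each time by feeding $P$ a carefully chosen finite extension and reading off what constraints the hypothesis "$P$ and $Q$ have the same extensions" forces on $Q$. Throughout I will use the two elementary structural facts already available: Lemma~\ref{l:degenwitness} (in a corank-$1$ configuration every point lies on at most one minimal spanned flat), and Lemma~\ref{l:extrest} (extension-equivalence passes to restrictions, or rather: equality of the families of chirotopes realizable on top of a pair of extensions implies it for the pair of base configurations). I will also freely build one- and two-point extensions that pin down the affine position of a newly added point relative to the configuration; since $P$ is finite and full-dimensional, such "generic enough" extensions always exist.

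\textbf{Step 1: $Q$ has corank $1$.} Since $P$ has corank~$1$, its chirotope has a unique (up to sign) Radon partition $X = X^+ \sqcup X^-$ recording the minimal affine dependence among the points of $P$. I would first observe that $Q$ cannot be full-dimensional of corank $0$, i.e.\ affinely independent after removing a point in too strong a way: in fact $Q$ must be full-dimensional, because if $\aff{Q}$ were a proper subspace then some $(d+1)$-subset of $P$ that is an affine basis would have a nonzero orientation not attainable by the corresponding subset of any extension of $Q$ — more precisely, extend $P$ by a point making that $(d+1)$-simplex into a corank-$1$ configuration with a prescribed orientation pattern that cannot be matched inside a lower-dimensional $Q$. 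Next, $Q$ cannot have corank $\ge 2$: if it did, then $Q$ would admit a chirotope with two "independent" zero-patterns, which obstructs realizing on top of $Q$ some extension of $P$ in which the added points are placed generically with respect to $P$ — concretely, one places $d$ extra points in general position and checks that the resulting chirotope of $P$ (with its single corank-$1$ dependence, otherwise generic) forces the host configuration to have corank exactly $1$. So $Q$ has corank $1$, and moreover the Radon partition of $Q$ must agree with that of $P$ (same $X^+ \sqcup X^-$, up to the global sign), since that partition is encoded in the sign of a single maximal minor and its neighbours, which are fixed by $\chi_P$ being realizable on top of $Q$.

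\textbf{Step 2: $I$ is good for $Q$.} Recall $I = \{a,b\}$ with $\aff{P_{|I}}$ a line meeting the hyperplane $H_P = \aff{P_{|X\setminus I}}$ transversally in the point $p_I$. From Step~1, $Q$ has corank~$1$ with the same Radon partition. I claim $X \setminus I$ spans a hyperplane in $Q$: by Lemma~\ref{l:degenwitness} applied to $P$, removing $a$ leaves $P \setminus \{a\}$ affinely independent (since $b$ does not lie on the hyperplane $\aff{P_{|X\setminus I}}$, whereas it does lie on $\aff{P_{|X\setminus\{a\}}}$ were that a hyperplane — one has to chase the combinatorics of which subsets are bases), and this "which $(d+1)$-subsets are affine bases" data is exactly what $\chi_Q = \chi_P$ records; hence the same subsets are bases in $Q$, so $Q_{|X\setminus I}$ spans a hyperplane $H_Q$ and $Q_{|I}$ spans a line $\ell_Q$. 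That $\ell_Q$ is not parallel to $H_Q$ is equivalent to: extending by one point on the line $\aff{P_{|I}}$ "beyond" the configuration produces a $(d+2)$-subset that is again corank~$1$ with the line hitting the hyperplane — a zero-pattern that cannot be realized if $\ell_Q \parallel H_Q$. (Parallelism of $\ell_Q$ and $H_Q$ would mean the added point, forced onto $\ell_Q$ by the chirotope and onto $H_Q$ by the chirotope, cannot exist.) This is where I expect the main obstacle: translating "the line meets the hyperplane transversally" into a purely chirotopal (extension-realizability) statement. The clean way is to add to $P$ a single point $r$ chosen on $H_P \setminus (\text{flats of } P_{|X\setminus I})$ and on $\aff{P_{|I}}$ simultaneously — i.e.\ $r = p_I$ itself if $p_I \notin P$, else a nearby witness — and note that $\chi_{P \sqcup \{r\}}$ simultaneously forces $r$ coplanar with $X\setminus I$ and collinear with $I$; any realization on top of $Q$ therefore forces $\ell_Q \cap H_Q \ne \emptyset$, i.e.\ goodness. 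One must handle the degenerate sub-case $p_I \in P$ separately, using instead the midpoint-type trick already used in the proof of Lemma~\ref{l:asym2witness}.

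\textbf{Step 3: $P_{\downarrow I}$ and $Q_{\downarrow I}$ have the same extensions.} Now that $I$ is good for both, both $P_{\downarrow I} = P_{|X\setminus I} \sqcup \{p_I\}$ and $Q_{\downarrow I} = Q_{|X\setminus I}\sqcup\{q_I\}$ are defined. The point is that adding the single point $p_I$ to $P$ is an extension of $P$ whose chirotope $\chi_{P \sqcup \{p_I\}}$, by hypothesis, is realizable on top of $Q$ — and in any such realization the copy of $p_I$ must land on $\ell_Q \cap H_Q = \{q_I\}$, because being on the line $\aff{(\cdot)_{|I}}$ and on the hyperplane $\aff{(\cdot)_{|X\setminus I}}$ is dictated by the zero-entries of $\chi_{P\sqcup\{p_I\}}$. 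Hence $P \sqcup \{p_I\}$ and $Q \sqcup \{q_I\}$ "have the same extensions" in the following precise sense: any chirotope realizable on top of $P\sqcup\{p_I\}$ is realizable on top of $Q \sqcup \{q_I\}$ (feed the extension to $P\sqcup\{p_I\}\subseteq$ that extension of $P$, realize on top of $Q$, and observe the $p_I$-vertex lands at $q_I$), and symmetrically with the roles of $P$ and $Q$ swapped — here I use that "have the same extensions" is a symmetric hypothesis, so Steps 1–2 apply verbatim to $Q$ as well, giving that $I$ is good for $Q$ and $q_I$ is pinned down. Finally, $P_{\downarrow I}$ is obtained from $P\sqcup\{p_I\}$ by deleting the two points with labels in $I$; applying Lemma~\ref{l:extrest} (with $Y$ there taken to be $I$, and with the roles of the base and extended configurations as $P_{\downarrow I} \subset P \sqcup \{p_I\}$ relabelled appropriately) in both directions yields that every chirotope realizable on top of $P_{\downarrow I}$ is realizable on top of $Q_{\downarrow I}$ and conversely — that is, $P_{\downarrow I}$ and $Q_{\downarrow I}$ have the same extensions, as required. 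The only care needed is bookkeeping of label sets so that Lemma~\ref{l:extrest} applies literally; this is routine.
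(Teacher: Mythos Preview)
Your Steps~2 and~3 are correct and match the paper's argument exactly: add the single point $p_I$ to $P$ to force $\ell_Q\cap H_Q\neq\emptyset$, then for Step~3 lift an arbitrary extension of $P_{\downarrow I}$ to an extension of $P$ by adjoining $P_{|I}$, realize on top of $Q$, and observe that the $p_I$-labelled point is forced to land at $q_I$.

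Step~1, however, is needlessly elaborate. Applying the hypothesis to the empty extension already yields $\chi_Q=\chi_P$; since $|X|=d+2$ and full-dimensionality is encoded in the chirotope, $Q$ is immediately full-dimensional of corank~$1$, with the same bases and Radon partition as $P$, without building any auxiliary extensions. You in fact invoke $\chi_Q=\chi_P$ in Step~2, so you already have this in hand; the paper simply uses it from the outset. Your concern about the degenerate sub-case $p_I\in P$ in Step~2 is also misplaced: one may always adjoin $p_I$ under a fresh label, and the chirotope of $P\sqcup\{p_I\}$ still records the required incidences.
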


\noindent
In the conclusion that ``$P_{\downarrow I}$ and $Q_{\downarrow I}$ have the same extensions'', we refer to extensions within their $(d-1)$-dimensional affine hulls, and to $(d-1)$-dimensional chirotopes. In order for this to make sense, we have to prescribe an orientation for the hyperplanes $\aff{P_{\downarrow I}}$ and $\aff{Q_{\downarrow I}}$, so that we can properly assign orientations to the $(d-1)$-simplices that define the chirotopes. We do it in a compatible way by imposing that the orientation of the $(d-1)$-dimensional simplices with vertices $\{p_i\}_{i\in X\setminus I}$ and $\{q_i\}_{i\in X\setminus I}$ coincide.

\begin{proof}
  The fact that $Q$ has the same extensions as $P$ has several consequences. First, $P$ and $Q$ have the same chirotope, so $Q$ also has corank~$1$ and $\aff{Q_{|(X\setminus I)}}$ is a hyperplane that does not contain the line $\aff{Q_{|I}}$. Second, the chirotope of $P\sqcup \{p_I\}$ is realizable on top of $Q$, so the hyperplane $\aff{Q_{|(X\setminus I)}}$ does intersect the line $\aff{Q_{|I}}$. That intersection is therefore a single point $q_I$, and the pair $I$ is good for $Q$.

\medskip

Now consider an extension $\widehat P \in (\R^d)^Y$ of $P_{\downarrow I}$. Let us extend $\widehat P$ into $\widehat P' \eqdef \widehat P \cup P_{|I}$ to turn it into an extension of $P$. We then take a point configuration $\widehat Q'$ that realizes the chirotope of $\widehat P'$ on top of $Q$, and finally restrict $\widehat Q'$ into $\widehat Q \eqdef Q'_{|Y} $. By construction, $\widehat Q$ has the same chirotope as $\widehat P$ and contains $Q_{|(X \setminus I)}$. We must also have $q_I$ in $\widehat Q'$, and therefore in $\widehat Q$, as $\widehat P'$ contains $P$ and $p_I = \aff{P_{|I}} \cap \aff{P_{|(X\setminus I)}}$, and $q_I$ is the only point in $\aff{Q_{|I}} \cap \aff{Q_{|(X\setminus I)}}$. Hence, $\widehat Q$ realizes the chirotope of $\widehat P$ on top of $Q_{\downarrow I}$. Since $Q$ has corank~$1$ and $I$ is good for $Q$, we can exchange the roles of $P$ and $Q$ in this argument to obtain that $P_{\downarrow I}$ and $Q_{\downarrow I}$ have the same extensions.
\end{proof}

\section{Intermezzo: Asymptotic rigidity from two-sided, possibly nongeneric, extensions}

At this point, we already have all the ingredients to prove the following weaker version of Theorem~\ref{t:mainog}:

\begin{proposition}\label{p:main}
   For $d \ge 2$, two full-dimensional finite point sequences in
   $\R^d$ have the same extensions if and only if they are directly
   affinely equivalent
\end{proposition}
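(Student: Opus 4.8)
The converse implication is immediate, so I would concentrate on showing that having the same extensions forces direct affine equivalence. Arguing by contradiction, suppose $P,Q\in(\R^d)^X$ have the same extensions but are not directly affinely equivalent. I would first record that passing to a common label subset preserves the hypothesis: if $\chi$ is realized on top of $P_{|Y}$ by $P_{|Y}\sqcup P'$, then $P\sqcup P'$ extends $P$, so a realization of its chirotope on top of $Q$ restricts, on $Y$ together with the new labels, to a realization of $\chi$ on top of $Q_{|Y}$, and symmetrically. Since the case $|X|=d+1$ is trivial (both configurations are then an affine basis of the same orientation), Lemma~\ref{l:to-standard} applies, and combined with the previous observation it lets me assume from the outset that $|X|=d+2$ and that $P$ — hence also $Q$, which has the same chirotope — has corank~$1$. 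I then induct on $d\ge 2$.

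For the base case $d=2$ I would split on whether three of the four points of $P$ are aligned. If $p_1,p_2,p_3$ are aligned, then $p_4$ lies off their line, $q_1,q_2,q_3$ are aligned, $q_4$ lies off their line, and the unique affine map $\psi$ with $\psi(q_j)=p_j$ for $j\in\{1,2,4\}$ is direct since it matches the orientations $\chi_Q(1,2,4)=\chi_P(1,2,4)$. Any chirotope realizable on top of $P$ is then realizable on top of $Q$, hence on top of $\psi(Q)$, which extends $\{p_1,p_2,\psi(q_3)\}$ with $\psi(q_3)$ on $\aff{\{p_1,p_2\}}$; feeding the aligned triple $(p_1,p_2,p_3)$ and a parameter $\varepsilon$ into Lemma~\ref{l:asym2witness} gives a finite extension of $P$ whose chirotope cannot be realized on top of $\{p_1,p_2,q\}$ when $\|p_3-q\|_2>\varepsilon$, so, $\psi$ and $q_3$ being independent of $\varepsilon$, letting $\varepsilon\to 0$ forces $\psi(q_3)=p_3$, a contradiction. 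When no three points of $P$ are aligned I first create an alignment: if $p_1p_2p_3p_4$ is a convex quadrilateral I add the intersection $z$ of its diagonals, and if $p_4$ lies inside the triangle $p_1p_2p_3$ I add the intersection $z$ of $\aff{\{p_1,p_4\}}$ with $\aff{\{p_2,p_3\}}$. In either case $P\cup\{z\}$ and $Q\cup\{z'\}$, with $z'$ the analogous point of $Q$, have the same extensions, because the label of $z$ is forced to $z'$ in any realization over $Q$ and conversely; I then run the previous argument twice inside $P\cup\{z\}$ — once on an aligned triple through $z$ to obtain $\psi(z')=z$, and then on the aligned triple $(p_2,z,p_4)$, with $p_2$ and $z$ now matched by $\psi$, to obtain $\psi(q_4)=p_4$. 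The few degenerate configurations (a repeated point, and so on) are handled directly, their chirotopes already determining them up to direct affine equivalence.

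For the inductive step $d\ge 3$ I would pick a good pair $I=\{i_1,i_2\}$ for $P$ — one exists for any full-dimensional corank-$1$ configuration of at least $d+2$ points, after a short case analysis. By Lemma~\ref{l:good}, $I$ is also good for $Q$, and $P_{\downarrow I},Q_{\downarrow I}$ are full-dimensional corank-$1$ configurations of $d+1$ points in a $(d-1)$-dimensional space having the same extensions, so by the inductive hypothesis they are directly affinely equivalent through a map $\phi_0$ matching $P_{|(X\setminus I)}$ with $Q_{|(X\setminus I)}$ and $p_I$ with $q_I$. I extend $\phi_0$ to an affine map $\phi$ of $\R^d$ with $\phi(p_{i_1})=q_{i_1}$ (the case where $p_{i_1}$ lies on the hyperplane $\aff{P_{|(X\setminus I)}}$, i.e. $p_{i_1}=p_I$, is analogous). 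Then $\phi$ carries $\aff{P_{|I}}$ onto $\aff{Q_{|I}}$, both lines running through $q_I=\phi_0(p_I)$, so $\phi(p_{i_2})$ lies on the correct line but a priori at the wrong point; what remains is to match the ratio $\frac{p_Ip_{i_2}}{p_Ip_{i_1}}$ with $\frac{q_Iq_{i_2}}{q_Iq_{i_1}}$. To do this I re-invoke the common-extensions hypothesis of $P$ and $Q$ through a planar Von Staudt construction living in a $2$-plane $\Pi=\aff{\{p_{i_1},p_{i_2},p_j\}}$, where $j\in X\setminus I$ is chosen with $p_j$ off the line $\aff{P_{|I}}$. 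Inside $\Pi$ the four points $p_{i_1},p_{i_2},p_j,p_I$ form exactly the ``three aligned plus one'' pattern, so Lemma~\ref{l:asym2witness} applied to $(p_{i_1},p_I,p_{i_2})$ yields a planar constructible extension whose chirotope cannot be realized on top of $\{p_{i_1},p_I,q\}$ when $\|p_{i_2}-q\|_2>\varepsilon$. Adjoining $p_I$ and these gadget points to $P$ gives an extension $\widetilde P$ of $P$; its chirotope is realizable on top of $Q$, and in any realization $\widetilde Q$ the label of $p_I$ is forced to $q_I$, while a walk along the constructibility order shows that the gadget points land in the $2$-plane $\aff{\{q_{i_1},q_I,q_j\}}$ and reproduce every colinearity of the planar gadget. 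Transporting that plane affinely onto $\Pi$ by the map sending $(q_{i_1},q_I,q_j)$ to $(p_{i_1},p_I,p_j)$ and invoking Lemma~\ref{l:asym2witness} forces the image of $q_{i_2}$ within $\varepsilon$ of $p_{i_2}$; letting $\varepsilon\to 0$ gives $\phi(p_{i_2})=q_{i_2}$, so $\phi$ is a direct affine equivalence $P\to Q$, contradicting our assumption.

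The step I expect to be the main obstacle is this lifting argument: the inductive hypothesis only sees the combinatorics of a hyperplane section and by itself does not constrain the two deleted points along their common line, so the hypothesis must be used a second time at full dimension, through the auxiliary planar construction. Making this rigorous requires checking that the incidence data the Von Staudt machinery relies on — colinearity of triples, and enough of the planar orientation for the sign argument that closes Lemma~\ref{l:asym2witness} — is visible in the $\R^d$-chirotope; this holds because in a full-dimensional configuration a triple is aligned exactly when the chirotope vanishes on all of its $(d+1)$-point extensions within the configuration, from which one recovers, up to a global sign, the planar chirotope of any coplanar subfamily. The remaining bookkeeping — existence of a good pair, the degenerate corank-$1$ configurations, and the $\varepsilon$-independence that licenses each limit — is routine.
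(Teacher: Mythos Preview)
Your argument is correct, but it follows a genuinely different route from the paper's. The paper isolates the ``where do $P$ and $Q$ differ'' question into a separate structural lemma (Lemma~\ref{l:recur}): given corank-$1$ configurations $P,Q$ with the same chirotope that are not affinely equivalent, there is a pair $I$ that is either good for exactly one of them, or good for both with $P_{\downarrow I}$ and $Q_{\downarrow I}$ already not affinely equivalent. With this in hand, the induction closes in one line: in the first case a single point (the intersection $p_I$ or $q_I$) witnesses an extension not realizable on the other side, and in the second case Lemma~\ref{l:good} feeds the non-equivalent projections straight into the inductive hypothesis. The base case $d=2$ is handled the same way, with Lemma~\ref{l:asym2witness} applied once to the aligned triple $(p_0,p_1,p_{\{2,3\}})$.

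Your induction instead picks \emph{any} good pair, uses the inductive hypothesis to conclude that the projections \emph{do} match, and then spends a second invocation of the same-extensions hypothesis --- via a planar Von~Staudt gadget embedded in a $2$-plane of $\R^d$ --- to pin down the remaining point $q_{i_2}$ along its line. This works (the $d$-dimensional chirotope does record coplanarity with $q_{i_1},q_{i_2},q_j$ and the induced planar chirotope up to a global sign, and a reflection across the line $\aff{\{p_{i_1},p_I\}}$ disposes of that sign), but it trades the short structural Lemma~\ref{l:recur} for the lifting argument you rightly flag as the main obstacle. Two small remarks: the ``walk along the constructibility order'' is not quite the right justification for why the gadget points land in $\Pi'$, since constructible only means \emph{at most} two incidences --- the clean reason is direct coplanarity detection by the chirotope, which you also allude to; and in your base case the triple $(p_2,z,p_4)$ is aligned only in the convex subcase, so the triangle subcase needs $(p_1,z,p_4)$ instead. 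Neither affects correctness. The payoff of the paper's organization is modularity (the dimension-reduction and the witness-location are decoupled), while yours avoids proving Lemma~\ref{l:recur} at the cost of handling the $2$-plane-in-$\R^d$ bookkeeping.
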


\noindent
Note that this statement allows nongeneric extensions and makes symmetric assumptions on the two point configurations. We believe that this simpler proof is of independent interest so we present it here, but this section can be skipped towards the proofs of the stronger Theorems~\ref{t:mainog} and~\ref{t:maineps}. We start with a lemma that enables a recurrence on dimension.

\begin{lemma}\label{l:recur}
  Let $d \ge 2$ and let $P, Q \in (\R^d)^X$ be full-dimensional point configurations of corank~$1$ with the same chirotope. If $P$ and $Q$ are not directly affinely equivalent, then there exists a pair $I \subset X$ that (i) is good for exactly one of them, or (ii) is good for both of them and such that $P_{\downarrow I}$ and  $Q_{\downarrow I}$ are not affinely equivalent.
\end{lemma}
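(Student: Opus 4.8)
The idea is to work with the unique affine bases of $P$ and $Q$ and exploit that the configurations are both of corank~$1$, so each consists of an affine basis plus one extra point involved in a single minimal Radon partition. First I would fix an affine basis $p_{i_0},\dots,p_{i_d}$ of $P$ indexed by $Y_0 \subseteq X$; since $P$ and $Q$ have the same chirotope, $q_{i_0},\dots,q_{i_d}$ is also an affine basis of $Q$ with the same orientation. Let $\phi$ be the unique affine transform with $\phi(p_{i_j}) = q_{i_j}$ for all $j$; then $\phi$ is direct. Let $k$ be the unique index in $X \setminus Y_0$. If $\phi(p_k) = q_k$ then $P$ and $Q$ would be directly affinely equivalent, contrary to hypothesis; so $\phi(p_k) \neq q_k$, and the whole discrepancy between $P$ and $Q$ is concentrated at the point labeled $k$.

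\textbf{Main case analysis.} The plan is then to produce a good pair $I$ that ``sees'' this discrepancy. The natural candidates are pairs $I = \{k, j\}$ containing $k$, so that $P_{\downarrow I}$ replaces the pair $\{p_k, p_j\}$ by the intersection of their spanning line with the hyperplane through the remaining $d$ basis points, reducing to dimension $d-1$ where one can hope to recurse or conclude directly. I would first dispose of the degenerate situations using Lemma~\ref{l:degenwitness}: since $P$ has corank~$1$, the point $p_k$ lies in \emph{at most one} inclusion-minimal flat spanned by $P \setminus \{p_k\}$, and likewise for $q_k$ in $Q$. A pair $I = \{k,j\}$ fails to be good for $P$ precisely when $\aff{P_{|I}}$ is parallel to (or contained in) $\aff{P_{|(X \setminus I)}}$; because $P\setminus\{p_k\}$ is an affine basis of $\R^d$, the hyperplane $\aff{P_{|(X\setminus I)}}$ has a well-defined direction, and the set of ``bad'' $j$ corresponds to a controlled degenerate configuration. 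If there is some $j$ that is good for exactly one of $P, Q$, we are in case~(i) and done. So the remaining case is that every pair $\{k,j\}$ (for $j$ ranging over a suitable subset of $Y_0$) that is good for $P$ is also good for $Q$ and vice versa.

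\textbf{Concluding in the symmetric case.} Suppose then that $I = \{k, j\}$ is good for both $P$ and $Q$ for all relevant $j$. The strategy is to show that for at least one such $j$, the reduced configurations $P_{\downarrow I}$ and $Q_{\downarrow I}$ are not affinely equivalent. Here I would argue by contradiction: if $P_{\downarrow I}$ and $Q_{\downarrow I}$ were affinely equivalent for \emph{every} good pair $I = \{k,j\}$, then — since each $P_{\downarrow I}$ retains the basis points $\{p_i : i \in Y_0 \setminus \{j\}\}$ and the configurations have the same chirotope, pinning down the affine equivalence — one can reconstruct the location of $p_k$ relative to the basis from the collection of points $\{p_I\}_j$, and similarly for $q_k$; matching these across all $j$ would force $\phi(p_k) = q_k$, a contradiction. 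Concretely, the point $p_{\{k,j\}}$ is the barycentric combination of $\{p_i : i \in Y_0 \setminus\{j\}\}$ obtained by eliminating the $p_j$-coordinate from the barycentric coordinates of $p_k$ with respect to the basis; knowing all these projections (for the $d$ or so admissible choices of $j$) determines $p_k$ up to the (direct) affine map already fixed on the basis. I expect the \textbf{main obstacle} to be the bookkeeping in this last step: handling the few choices of $j$ for which $\{k,j\}$ is not good (i.e.\ where the line $\aff{P_{|I}}$ happens to be parallel to the complementary hyperplane), checking that enough good pairs survive to determine $p_k$, and verifying that "affinely equivalent" for each $P_{\downarrow I}$ really does compose to "directly affinely equivalent" for $P$ and $Q$ — one must be careful that the compatible orientation convention for the hyperplanes $\aff{P_{\downarrow I}}$ is respected, and that an a priori \emph{indirect} equivalence in dimension $d-1$ is excluded either by this analysis or absorbed into case~(ii)'s weaker "not affinely equivalent" conclusion.
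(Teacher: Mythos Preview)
Your proposal is correct and follows essentially the same route as the paper: fix a basis, reduce the discrepancy to the single extra point $k$, and examine the pairs $I=\{k,j\}$ to either land in case~(i) or reconstruct $p_k$ from the projections $p_I$. The paper's proof differs only in presentation---it normalizes so that $q_i=p_i$ on the basis rather than carrying $\phi$, and it makes your anticipated ``bookkeeping obstacle'' fully explicit by first treating the degenerate case where $p_k$ lies in a proper coordinate flat of the basis (choosing $j$ outside that flat, so that $p_I=p_k$ directly) and then, in the generic case, giving a short three-way case split showing that the data $\{\widehat p_j\}_{1\le j\le d}$ (with $\widehat p_j=\infty$ when the pair is not good) determines $p_k$.
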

\begin{proof}
    Up to applying the same relabeling to both configurations, we can assume that $X=\{0,1,\dots,d,d+1\}$, that $\{p_0,\dots, p_d\}$ form a direct affine basis, and that $p_{d+1}$ lies in the affine hull of $p_0,p_1, \ldots, p_k$ with $k\le d$, and not in any strictly smaller subset of these points. Since $P$ and $Q$  have the same chirotope, $\{q_0,\dots, q_d\}$ form a direct affine basis, and $q_{d+1}$ lies in the affine hull of $q_0,q_1, \ldots, q_k$ with $k\le d$, and not in the affine hull of any strictly smaller subset of these points. By applying a suitable direct affine transformation to $Q$, we can further assume that $q_i=p_i$ for $0\leq i\leq d$. 

\medskip

With these assumptions, $P$ and $Q$ are not directly affinely equivalent if and only if $p_{d+1}\neq q_{d+1}$. In particular, we cannot have $k=0$ as otherwise $p_{d+1}=p_0 = q_0 = q_{d+1}$. If $k \le d-1$, then the point $p_{d+1}$ belongs both to the line $\aff{\{p_d,p_{d+1}\}}$ and to the hyperplane $\aff{\{p_0,p_1, \ldots, p_{d-1}\}}$. That is, taking $I=\{d,d+1\}$, we have $\aff{P_{|I}} \cap \aff{P_{|(X\setminus I)}} =\{p_{d+1}\}$. It follows that $I$ is good for $P$, that $p_I=p_{d+1}$, $P_{\downarrow I}=P\setminus\{p_d\}$, $q_I=q_{d+1}$, and $Q_{\downarrow I}=Q\setminus\{q_d\}$. Since $p_{d+1}\neq q_{d+1}$, this shows that $P_{\downarrow I}$ and $Q_{\downarrow I}$ are not affinely equivalent. 

\medskip

It remains to handle the case $k=d$. For $1 \le i \le d$, the flat $\aff{P\setminus\{p_i,p_{d+1}\}}$ is a hyperplane and $\aff{\{p_i,p_{d+1}\}}$ is a line. We put $\widehat p_i \eqdef \aff{\{p_i,p_{d+1}\}} \cap \aff{P\setminus\{p_i,p_{d+1}\}}$ if they intersect, and $\widehat p_i \eqdef \infty$ if they are parallel (the line cannot be contained in the hyperplane since $P$ is full-dimensional). We claim that $p_{d+1}$ is uniquely determined by $\{p_0,p_1, \ldots, p_d, \widehat p_1, \widehat p_2, \ldots, \widehat p_d\}$. Indeed, we are in one of three cases:

\begin{itemize}
\item[(i)] There are two indices $1\leq i\neq j\leq d$, such that $\widehat p_i\neq \infty\neq \widehat p_j$. Since $k=d$, the points $p_i,p_j,p_{d+1}$ cannot be colinear, so the lines $\aff{\{p_i,\widehat p_i\}}$ and $\aff{\{p_j,\widehat p_j\}}$ intersect in a single point which is $p_{d+1}$.
  
\item[(ii)] There are two indices $1\leq i\neq j\leq d$, such that $\widehat p_i\neq \infty= \widehat p_j$. In that case, $p_{d+1}$ is on the line $\aff{\{p_i,\widehat p_i\}}$ and on the hyperplane $\Pi$ through $p_j$ and parallel to $\aff{P\setminus\{p_j,p_{d+1}\}}$. We cannot have $p_j \in \aff{P\setminus\{p_j,p_{d+1}\}}$ since $P\setminus\{p_{d+1}\}$ is a basis of $\R^d$. Thus, $\Pi \neq \aff{P\setminus\{p_j,p_{d+1}\}}$ and we cannot have $p_i \in \Pi$ either. It follows that $\aff{\{p_i,\widehat p_i\}}$ intersects $\Pi$ in a single point, which is~$p_{d+1}$.
 		
\item[(iii)] $\widehat p_i= \infty$ for all $1\leq i\leq d$. In this case, $p_{d+1}$ belongs to the hyperplane $\Pi_i$ that contains $p_i$ and is parallel to $\aff{P\setminus\{p_i,p_{d+1}\}}$ for all $1\leq i\leq d$. Since $(p_0,p_1, \ldots, p_d)$ is a basis of $\R^d$, the intersection of the hyperplanes $\aff{P\setminus\{p_i,p_{d+1}\}}$ for $i=1,2, \ldots d$ is $\{p_0\}$, so the intersection of $\Pi_1$,$\Pi_2$, \ldots, $\Pi_d$  is a single point; each contains $p_{d+1}$, so that intersection is~$\{p_{d+1}\}$.
	\end{itemize}
We proceed similarly for $Q$. For $1 \le i \le d$ we let $\widehat q_i$ denote the intersection point of the line $\aff{\{q_i,q_{d+1}\}}$  and the hyperplane $\aff{Q\setminus\{q_i,q_{d+1}\}}$ if they are not parallel, and let $\widehat q_i = \infty$ otherwise. The same analysis yields that $q_{d+1}$ is uniquely determined by $\{q_0,q_1, \ldots, q_d, \widehat q_1, \widehat q_2, \ldots, \widehat q_d\}$. Finally, if $p_{d+1}\neq q_{d+1}$ then there is some $i$ such that $\widehat p_i \neq \widehat q_i$. Observe that $I=\{i,d+1\}$ is good for $P$ (resp. for $Q$) if and only if  $\widehat p_i \neq \infty$ (resp. $\widehat q_i\neq\infty$). If exactly one of $\widehat p_i$ or $\widehat q_i$ is $\infty$ we are in case (i), and otherwise $p_{\{i,d+1\}}=\widehat p_{i}\neq \widehat q_{i}=q_{\{i,d+1\}}$ ensures that $P_{\downarrow\{i,d+1\}}$ and $Q_{\downarrow\{i,d+1\}}$ are not affinely equivalent (because $q_0=p_0, \ldots, q_{i-1}=p_{i-1}, q_{i+1}=p_{i+1}, \ldots, q_{d}=p_{d}$ is a common affine basis).
\end{proof}

We can now prove our first rigidity result.

\begin{proof}[Proof of Proposition~\ref{p:main}]
  For the reverse implication, consider two point configurations $P, Q \in (\R^d)^X$ and let $\phi$ be an invertible, direct affine transformation of $\R^d$ that maps $P$ to $Q$. Observe that $\phi$ maps any extension $\widehat P$ of $P$ to an extension $\widehat Q$ of $Q$. Since invertible direct affine transforms preserve orientations, we must have $\chi_{\widehat Q} = \chi_{\widehat P}$.

\medskip

For the direct implication, we will show that if $P$ and $Q$ are not directly affinely equivalent, then they do not have the same extensions. Observe first that we can assume that $\chi_P=\chi_Q$, as otherwise $P$ and $Q$ do not have the same $0$-point extensions. This implies first of all that $P$ and $Q$ are also not affinely equivalent, as if $P$ and $Q$ were affinely equivalent but not directly affinely equivalent, then $\chi_P =-\chi_Q$.  Therefore, it also implies that $|X|>d+1$, as all configurations of $d+1$ points are affinely equivalent. We claim now that we can assume that $P$ has corank~$1$. Indeed, by Lemma~\ref{l:to-standard} there exist $Y \subset X$ of size $|Y|=d+2$ such that $P_{|Y}$ has corank~$1$ and is not directly affinely equivalent to $Q_{|Y}$. By Lemma~\ref{l:extrest}, if $P_{|Y}$ and $Q_{|Y}$ do not have the same extensions, then neither do $P$ and $Q$.

\medskip

Therefore, to prove the announced statement it suffices to establish the following property for every $d \ge 2$:

\begin{quote}
(Prop$_d$) {\em For every full-dimensional point configurations $P, Q \in (\R^d)^X$ with $P$ of corank~$1$, if $P$ and $Q$ are not affinely equivalent then they do not have the same extensions.}
\end{quote}

\noindent
We proceed by induction on $d$.

\medskip

For the \underline{initialization}, consider two point configurations $P, Q \in (\R^2)^X$, with $P$ of corank~$1$, that are not affinely equivalent. 
By Lemma~\ref{l:recur}, after relabeling and exchanging the roles of $P$ and $Q$ if necessary, we can assume that
$\{p_0,p_1,p_2\}$ is an affine basis and the pair $\{2,3\}$ is good for $P$. If $\{q_0,q_1,q_2\}$ is not an affine basis, then $\chi_P\neq \chi_Q$ and we are done. Otherwise, the Lemma guarantees that we are in one of following cases:
\begin{itemize}
\item[(i)] The pair $\{2,3\}$ is not good for $Q$. This means that 
 the line $\aff{p_2p_3}$ intersects the line $\aff{p_0p_1}$ in a point $p$, whereas the lines $\aff{q_2q_3}$ and $\aff{q_0q_1}$ do not intersect. Then the chirotope of the extension
$\widehat P = P \cup \{p\}$, cannot be realized on top of $Q$. Hence, $P$ and $Q$ do not have the same extensions.

\item[(ii)] 
The pair $\{2,3\}$ is good for $Q$, and the configurations $P'\eqdef \{p_0,p_1,p_2,p_{\{2,3\}}\}$ and $Q'\eqdef \{q_0,q_1,q_2,q_{\{2,3\}}\}$ are not affinely equivalent. 
Let $\phi$ be the affine map that sends $q_i$ to $p_i$ for $i=0,1,2$. We necessarily have $\phi(q_{\{2,3\}})\neq p_{\{2,3\}}$. We can apply Lemma~\ref{l:asym2witness} on the three aligned points $p_0, p_1, p_{\{2,3\}}$ to obtain an extension $\widehat P'$ of $P'$ whose chirotope cannot be realized on top of $\phi(Q')$. Since chirotopes are invariant under affine transformations, the chirotope of $\widehat P'$ cannot be realized on top of $Q'$ either. Then the extension $\widehat P\eqdef \widehat P' \cup \{p_3\}$ of $P$ cannot be realized on top of $Q$, with an argument analogous to that of Lemma~\ref{l:good}.
\end{itemize}

\medskip

Now, for the \underline{propagation}, let $d \ge 3$ and suppose that (Prop$_{d-1}$) holds. Let $P, Q \in (\R^d)^X$ with $P$ of corank~$1$, and suppose that $P$ and $Q$ are not directly affinely equivalent. By Lemma~\ref{l:recur}, we are in one of two cases:
\begin{itemize}
\item[(i)]there exists a pair $I \subset X$ such that $I$ is good for exactly one of $P$ or $Q$, say $Q$ (the argument is symmetric if it is $P$). This means that the line spanned by $Q_{|I}$ and the hyperplane spanned by $Q_{|X\setminus I}$ intersect in some point $q_I$, whereas the line spanned by $P_{|I}$ and the hyperplane spanned by $P_{|X\setminus I}$ are parallel. Letting $\widehat Q = Q \cup \{q_I\}$, we see that $\chi_{\widehat Q}$ cannot be realized on top of $P$. Hence, $P$ and $Q$ do not have the same extensions.

\item[(ii)] $I$ is good for $P$ and $Q$ and the point configurations $P_{\downarrow I}$ and $Q_{\downarrow I}$ are not affinely equivalent. By Lemma~\ref{l:good}, each of $P_{\downarrow I}$ and $Q_{\downarrow I}$ has corank~$1$. By the induction hypothesis (Prop$_{d-1}$), $P_{\downarrow I}$ and $Q_{\downarrow I}$ do not have the same extensions, and 
by Lemma~\ref{l:good}, it follows that $P$ and $Q$ do not have the same extensions.\qedhere
\end{itemize}
\end{proof}

\section{Tools: Scatterings}\label{s:scattering}

To prove that finite generic extensions suffice to discriminate between point configurations that are not affinely equivalent, we follow the steps of the proof of Proposition~\ref{p:main} and, at key places, apply so-called {\em scattering} arguments. The key idea behind scatterings is that the chirotope of a point set records some intersections between the affine and convex hulls of its subsets (but not all, see Figure~\ref{fig:nonsymmetric}). We start with a simple example.

\begin{lemma}\label{l:chirointer}
  Let $P, Q \in (\R^d)^X$ be two point configurations with the same chirotope. Let $Y, Z$ be disjoint subsets of $X$. %
  The affine hull of $P_{|Y}$ intersects the convex hull of $P_{|Z}$ if and only if the affine hull of $Q_{|Y}$ intersects the convex hull of $Q_{|Z}$.
\end{lemma}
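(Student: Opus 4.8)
The plan is to deduce this statement from the fact that ``having the same chirotope'' is equivalent to ``being the same oriented matroid,'' and then invoke the standard oriented matroid characterization of when an affine subspace meets a convex hull. Concretely, I would first reduce to a statement about the configuration $P_{|Y\cup Z}$ (restrictions preserve chirotopes) so we may assume $X = Y\sqcup Z$. Then I would recall the separation-theorem formulation: the affine hull $\aff{P_{|Y}}$ meets the convex hull $\conv{P_{|Z}}$ if and only if there is \emph{no} affine functional that vanishes on $\aff{P_{|Y}}$ and is strictly positive on all of $P_{|Z}$ — equivalently, no hyperplane containing $\aff{P_{|Y}}$ has all of $P_{|Z}$ strictly on one side. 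The key point is that the existence of such a separating hyperplane is a property of the chirotope alone.

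The cleanest way to see the last claim is via Farkas/LP duality phrased in terms of sign vectors. Lift each point $p_i$ to $\tilde p_i = (p_i,1)\in\R^{d+1}$. A hyperplane of $\R^d$ containing $\aff{P_{|Y}}$ and having $P_{|Z}$ strictly on one side corresponds to a vector $h\in\R^{d+1}$ with $\sprod{h}{\tilde p_i}=0$ for all $i\in Y$ and $\sprod{h}{\tilde p_i}>0$ for all $i\in Z$. By LP duality (Stiemke's lemma / the Farkas lemma), such an $h$ fails to exist if and only if there is a nonnegative combination $\sum_{i\in Z}\lambda_i \tilde p_i = \sum_{i\in Y}\mu_i\tilde p_i$ with the $\lambda_i$ not all zero — i.e. an affine dependence with positive coefficients on a nonempty subset of $Z$ and arbitrary coefficients on $Y$, which is exactly the condition that $\aff{P_{|Y}}\cap\conv{P_{|Z}}\neq\emptyset$. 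Both the existence of the separating $h$ and the existence of the witnessing dependence are determined by the covectors / circuits of the oriented matroid of $\tilde P$, hence by $\chi_P$. Since $\chi_P=\chi_Q$ implies $\tilde P$ and $\tilde Q$ have the same oriented matroid (same chirotope $\Leftrightarrow$ same covectors $\Leftrightarrow$ same circuits, up to the fixed reorientation), the two configurations agree on whether this hyperplane exists, and therefore on whether the affine hull meets the convex hull.

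Alternatively, and perhaps more self-containedly for this paper, I would argue directly: $\aff{P_{|Y}}\cap\conv{P_{|Z}}\neq\emptyset$ iff there exist a point $y$ in $\aff{P_{|Y}}$ and a point $z$ in $\conv{P_{|Z}}$ with $y=z$; expanding $y = \sum_{i\in Y}\mu_i p_i$ with $\sum\mu_i = 1$ and $z = \sum_{i\in Z}\lambda_i p_i$ with $\lambda_i\ge 0$, $\sum\lambda_i = 1$, the equation $\sum_{i\in Y}\mu_i p_i - \sum_{i\in Z}\lambda_i p_i = 0$ together with $\sum\mu_i - \sum\lambda_i = 0$ is a linear dependence among the $\tilde p_i$ whose support has a prescribed sign pattern on $Z$ (all $\le 0$, not all $0$) and is unconstrained on $Y$. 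The set of sign vectors of linear dependences among $\tilde p_1,\dots,\tilde p_n$ is precisely the set of vectors (circuits' spans) of the oriented matroid, which is recorded by the chirotope; so whether a dependence with the required sign pattern exists depends only on $\chi_P$, and the equivalence follows. I expect the only real subtlety — not an obstacle, but a point to state carefully — is the translation between ``chirotope'' and ``set of sign vectors of linear dependences'' (i.e.\ that the circuit signs, and hence vector signs, are determined by the $(d+1)$-subset orientations via Cramer's rule / the exchange axiom); I would cite \cite{BLSWZ} for this standard fact rather than reprove it.
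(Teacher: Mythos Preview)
Your argument is correct, but it takes a different route from the paper's own proof. The paper argues geometrically: if $F=\aff{P_{|Y}}$ and $C=\conv{P_{|Z}}$ are disjoint, it takes a strictly separating hyperplane, translates it to contain~$F$, and then rotates it (around successive $(d-2)$-flats through~$F$) until it becomes a supporting hyperplane of~$C$ spanned by~$F$ together with a face of~$C$. The resulting hyperplane is spanned by points of~$P$, so its existence --- and the fact that the remaining points of $P_{|Z}$ lie weakly on one side --- is read off directly from the values of~$\chi_P$ on $(d{+}1)$-tuples. By contrast, you lift to $\R^{d+1}$, phrase separation as the existence of a covector with a prescribed sign pattern (zero on~$Y$, strictly positive on~$Z$), and invoke the standard equivalence ``same chirotope $\Leftrightarrow$ same covectors/vectors'' from~\cite{BLSWZ}. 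The paper's approach is fully self-contained and avoids any appeal to oriented-matroid cryptomorphisms, at the cost of a somewhat terse rotation argument; your approach is cleaner and more conceptual (and would apply verbatim to non-realizable oriented matroids), at the cost of importing the chirotope--covector dictionary as a black box. Either is fine here; the Farkas step you include is not strictly needed, since once you know covectors are determined by~$\chi_P$ you are already done.
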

\begin{proof}
  Let $F$ denote the affine hull of $P_{|Y}$ and $C$ the convex hull of $P_{|Z}$. The statement is trivial if $F=\R^d$. If $F$ is $(d-1)$-dimensional, the statement is also straightforward since the fact that $F$ intersects $C$ is encoded in the chirotope $\chi_P = \chi_Q$. So we can assume that $F$ has dimension $d-2$ or less. Since $F$ and $C$ are convex and $C$ is compact, they are disjoint if and only if there exists a hyperplane $H$ that separates them strictly. If it exists, such a hyperplane $H$ is parallel to $F$, and can therefore be translated into a hyperplane $H'$ that contains $F$ and is disjoint from $C$. This hyperplane $H'$ can be rotated around successive $d-2$ flats containing $F$ to produce a hyperplane $H"$ that supports $C$ and is spanned by $F$ and a face of $C$. The existence of $H"$ depends only on the chirotope of $P$ and the statement follows. 
\end{proof}

Let us spell out a notion of scattering tailored to our needs. Let $P \in (\R^d)^X$ be a point configuration. For $s \in X$, a \defn{scattering} of $p_s$ in $P$ is a point configuration $S \in (\R^d)^Y$, for some $Y \supset X \setminus \{s\}$, such that
\begin{itemize}
\item[(a)] $S$ is a generic extension of $P_{|X\setminus \{s\}}$ %
\item[(b)] for $j \in Y\setminus X$, the point $s_j$ is on the same side as $p_s$ with respect to every hyperplane spanned by $P_{|X\setminus\{s\}}$, 
\item[(c)] every realization $R \in (\R^d)^Y$ of $\chi_S$ admits a one-point extension $\widehat R = R \cup \{r_s\}$ such that $\widehat R_{|X}$ realizes $\chi_P$.
\end{itemize}
We use two sufficient conditions for the existence of scatterings. Here is the first one:

\begin{lemma}\label{l:scattering1}
  Let $P \in (\R^d)^X$ be a full-dimensional point configuration and $s \in X$. If $p_s$ lies on exactly one inclusion-minimal flat spanned by points of $P\setminus\{p_s\}$, then there exists a scattering of $p_s$ in $P$.
\end{lemma}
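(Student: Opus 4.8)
The plan is to produce the scattering by a direct geometric construction, splitting on whether the affine hull of $P\setminus\{p_s\}$ is all of $\R^d$ or a hyperplane, and in the generic interior case simply perturbing $p_s$ by many nearby points in general position. Write $P' \eqdef P_{|X\setminus\{s\}}$ and let $F$ be the unique inclusion-minimal flat spanned by points of $P'$ that contains $p_s$; by Lemma~\ref{l:degenwitness}-style reasoning (applied here to the hypothesis rather than to corank~$1$) this flat is well-defined, and $p_s\notin P'$ is forced since otherwise $p_s$ would lie on infinitely many minimal flats or on none in the relevant sense. First I would treat the easy case $\dim F = d$, i.e. $p_s$ lies on no hyperplane (equivalently no flat) spanned by $P'$: then I take $Y = (X\setminus\{s\})\sqcup W$ with $|W|$ as needed, and place the points $\{s_j\}_{j\in W}$ in a tiny ball around $p_s$ small enough to (i) avoid every hyperplane spanned by $P'$, hence putting them on the same side as $p_s$ of all such hyperplanes (condition (b)), and (ii) be in general position with respect to $P'$ and with one another, which is possible since genericity is an open dense condition (condition (a)). For condition (c), any realization $R$ of $\chi_S$ — in particular any point $s_{j_0}$ among the scattered points — already lies strictly inside every cell-defining hyperplane arrangement in the same combinatorial position as $p_s$, so $\widehat R = R\cup\{r_s\}$ with $r_s \eqdef s_{j_0}$ has $\widehat R_{|X}$ realizing $\chi_P$: the orientation of every $(d+1)$-tuple involving $p_s$ is the orientation of a $(d+1)$-tuple of $P'$ together with a point on the ``$p_s$-side'', which is recorded by $\chi_S$.

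The substantive case is $\dim F = \dim\aff{P'} - $ something, most importantly when $\aff{P'}$ is a hyperplane $H$ (so the minimal flat $F$ containing $p_s$ is automatically $H$ itself, as $P'$ spans $H$) — wait, here one must be careful: if $\aff{P'}=H$ is a hyperplane then $p_s\in H$ would make $P$ not full-dimensional, contradicting the hypothesis, so actually $p_s\notin H$; then $p_s$ lies on \emph{no} flat spanned by $P'$ at all, contradicting ``exactly one''. Hence $\aff{P'}=\R^d$, and the only genuine case is $\dim F \le d-1$ with $F$ a proper flat spanned by $P'$, $p_s\in F$, and $F$ the unique minimal such. Here I place the scattered points $\{s_j\}$ in a small neighborhood of $p_s$ \emph{within a generic affine complement direction}: choose them in a tiny ball around $p_s$ that meets $F$ only at lower-dimensional sets, ensuring genericity with $P'$ and that each $s_j$ is on the $p_s$-side of every hyperplane spanned by $P'$ (possible since $p_s$ lies in the \emph{relative interior position} encoded combinatorially — strictly, $p_s$ lies on the hyperplanes through $F$ but strictly on one side of all others, and the ball is taken small enough). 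The point I must verify for (c) is that from $\chi_S$ one can reconstruct enough to place $r_s$: the key is that $F$, being the unique minimal flat through $p_s$, is determined by which subsets $T\subseteq X\setminus\{s\}$ have $\aff{P_{|T}}$ minimal among flats ``$\chi_S$-forced'' to pass near the scattered cluster; more concretely, the scattered points pin down, via Lemma~\ref{l:chirointer} and the chirotope, a flat of the same dimension as $F$ in any realization $R$, and $r_s$ can be taken to be any point of that flat lying in the appropriate cell — which is nonempty because the scattered points themselves certify its nonemptiness in $R$.

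For condition (c) in general I would argue: let $R\in(\R^d)^Y$ realize $\chi_S$. The restriction $R_{|X\setminus\{s\}}$ realizes $\chi_{P'}$, so it has ``the same flats'' as $P'$; in particular there is a flat $F_R \eqdef \aff{R_{|T_0}}$ corresponding to $F=\aff{P_{|T_0}}$ (pick $T_0\subseteq X\setminus\{s\}$ with $\aff{P_{|T_0}}=F$). Because the scattered points $s_j$ were placed near $p_s\in F$ and generically, their chirotope entries with $R_{|X\setminus\{s\}}$ record that they lie in the cell $\sigma$ of the arrangement of hyperplanes spanned by $P'$ that contains $p_s$, \emph{and} record (via triples/tuples spanning hyperplanes through $F$) that $p_s$-like points lie on $F$'s hyperplanes. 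Strictly: a point $r\in R^d$ has $\chi$-signature matching $p_s$ in $\chi_P$ iff $r$ lies on every hyperplane of the arrangement that contains $F$ and on the $p_s$-side of every other hyperplane. The first family of conditions cuts out $F_R$ (an affine subspace of dimension $\dim F$); the second cuts out an open subset of $\R^d$ whose intersection with $F_R$ is nonempty because any scattered point $s_{j_0}$ lies in it (it is on the correct side of all hyperplanes not through $F$, by condition (b) being recorded in $\chi_S$, and — here is the one remaining subtlety — I need $s_{j_0}$ to lie on $F_R$; this fails in general since scattered points are generic, so instead I take $r_s$ to be the point of $F_R$ nearest to $s_{j_0}$, or better, I include in the scattering construction \emph{one} point $s_{j^\star}$ that is required to lie on $F$, placed on $F$ near $p_s$, generic within $F$ with respect to $P'_{|T_0}$ — then $s_{j^\star}$'s chirotope constrains its image in $R$ to lie on $F_R$ and on the $p_s$-side of the remaining hyperplanes, so $r_s\eqdef$ (image of $s_{j^\star}$) works). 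Setting $\widehat R = R\cup\{r_s\}$, every $(d+1)$-tuple of $\widehat R_{|X}$ either avoids $s$ — handled by $\chi_{P'}=\chi_{R_{|X\setminus\{s\}}}$ — or involves $r_s$, whose sign against any $d$-subset of $R_{|X\setminus\{s\}}$ equals $\chi_P$'s sign for the corresponding tuple by the cell/flat characterization above. Hence $\widehat R_{|X}$ realizes $\chi_P$, giving (c).

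The main obstacle I anticipate is exactly condition (c): reconstructing a point playing the role of $p_s$ from $\chi_S$ alone. The clean fix is to build into the scattering, besides the generic cluster that guarantees (a) and (b), a single auxiliary point constrained to lie on the flat $F$ (in a generic position within $F$), so that $\chi_S$ forces its image in any realization to lie on the corresponding flat $F_R$; combined with side conditions recorded by the generic cluster this pins $r_s$ down. The bookkeeping that this auxiliary point does not destroy genericity of the rest (condition (a) asks $S$ to be a \emph{generic extension} of $P'$, but the auxiliary point lies on a flat spanned by $P'$, so it is \emph{not} generic) means I actually should \emph{not} add it to $S$ but rather argue abstractly that $F_R$ meets the correct open cell — which it does, since in $R$ the generic scattered points lie in that cell and near $F_R$, and the cell is open while $F_R$ has the same dimension as $F$ which met the cell's closure at $p_s$ in $P$; a short limiting/openness argument then produces $r_s\in F_R$ in the cell. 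I expect this openness argument, and the careful statement of ``$\chi_S$ records that $F_R$ meets the $p_s$-cell'', to be where the real work lies.
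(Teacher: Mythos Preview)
Your overall strategy matches the paper's, and your ``easy case'' ($F=\R^d$, i.e.\ $p_s$ lies on no hyperplane spanned by $P'$) is fine. The genuine gap is in the harder case $\dim F\le d-1$, precisely at the point you yourself flag: showing that in an arbitrary realization $R$ of $\chi_S$, the flat $F_R$ meets the open cell where $r_s$ must live.

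Your proposed fixes do not work. Taking ``the point of $F_R$ nearest to $s_{j_0}$'' is not a chirotope-invariant operation and gives no control over which side of the remaining hyperplanes that point lies on. Inserting an auxiliary point on $F$ violates condition~(a), as you observe. And the final ``openness argument'' relies on the assertion that the scattered points lie \emph{near $F_R$} in $R$; this is false in general. In $P$ the scattered points are near $p_s\in F$ because you chose a small ball, but the chirotope records no metric information, so in $R$ the scattered points can be arbitrarily far from $F_R$ while still lying in the correct cell. Nothing in your construction forces $F_R$ to come anywhere close to that cell.

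The paper closes this gap with one additional idea: it places exactly $d+1$ scattered points so that the simplex $\conv\{s_1,\dots,s_{d+1}\}$ contains $p_s$ in its interior. Then $F=\aff{P_{|Z_0}}$ meets $\conv(S_{|[d+1]})$, and Lemma~\ref{l:chirointer} transfers this intersection to any realization: $F_R=\aff{R_{|Z_0}}$ must meet $\Delta=\conv(R_{|[d+1]})$. Any point of $F_R\cap\Delta$ (avoiding a finite union of lower-dimensional flats) then serves as $r_s$: membership in $F_R$ handles the tuples with $\chi_P(T)=0$, and membership in $\Delta$ (whose vertices are all in the correct cell by condition~(b)) handles the tuples with $\chi_P(T)\ne 0$. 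You invoke Lemma~\ref{l:chirointer} in passing but never set up the convex-hull containment that makes it bite; that is the missing ingredient.
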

\begin{proof}
  Without loss of generality let us assume that $X \cap [d+1] = \emptyset$. Let $\h_0$ denote the union of the (finitely many) hyperplanes spanned by $P_{|X\setminus \{s\}}$ and not containing $p_s$. We fix $\varepsilon>0$ such that the ball $A\eqdef \ball{\varepsilon}{p_s}$ of radius $\varepsilon$ centered in $p_s$ is disjoint from $\h_0$. For $i =1, 2, \ldots, d$ we pick $s_i \in A \setminus \h_{i-1}$ and define $\h_i$ to be the union of the (finitely many) hyperplanes spanned by points of $P\sqcup\{s_j\}_{j\le i}$. Finally, we choose $s_{d+1} \in A\setminus \h_d$ so that the simplex spanned by $\{s_j\}_{j\in [d+1]}$ contains $p_s$ in its interior. We let $Y=(X \setminus \{s\})\cup [d+1]$ and define  $S=P_{|X\setminus\{s\}} \cup \{s_j\}_{j \in [d+1]}$.
  
\medskip

Let us check that $S$ is a scattering of $p_s$ in $P$. Condition~(a) holds because each $s_i$ avoids $\h_{i-1}$, and Condition~(b) holds because each $s_i$ is in $A$. It remains to check Condition~(c). For this, consider some arbitrary realization $R$ of $\chi_S$. Let $\mathcal{Z}$ be the set of subsets $Z \subset X\setminus\{s\}$ such that $\aff{P_{|Z}}$ is the unique inclusion-minimal flat spanned by $P\setminus\{p_s\}$ that contains $p_s$. Let us fix some $Z_0 \in \mathcal{Z}$. Since $\aff{P_{|Z_0}}$ contains $p_s$, it intersects $\conv(S_{|[d+1]})$, and, by Lemma~\ref{l:chirointer}, the flat $\aff{R_{|Z_0}}$ therefore intersects $\Delta = \conv(R_{|[d+1]})$. We can therefore extend $R$ into $\widehat R = R \cup \{r_s\}$ by picking $r_s$ to be any point in $(\aff{R_{|Z_0}} \cap \Delta)\setminus \h$, where $\h$ denotes the union of all flats $F$ spanned by points of $R$ such that $F \not\supset \aff{R_{|Z_0}}$.

We claim that $\widehat R_{|X}$ realizes $\chi_P$. Indeed, consider some $T \in X^{d+1}$. We are in one of three cases:
\begin{itemize}
\item If $T$ avoids $s$, then $\chi_{\widehat R}(T) = \chi_P(T)$ because $R$ realizes $S$ and $P_{|X\setminus \{s\}} = S_{|X\setminus \{s\}}$.
\item If $T$ contains $s$ and $\chi_P(T)\neq 0$, then $\conv(S_{|[d+1]})$ avoids $\aff{P_{|T}}$ (because $S_{|[d+1]} \subset A$), and, by Lemma~\ref{l:chirointer}, $\Delta= \conv(R_{|[d+1]})$ therefore avoids $\aff{R_{|T}}$. It follows that $\chi_{\widehat R}(T) = \chi_{\widehat R}(T')$, where $T'$ is the (d+1)-tuple obtained by substituting $1$ for $s$ in $T$. Since $\chi_R=\chi_S$, $\chi_{\widehat R}(T') = \chi_S(T')$, and since $s_i \in A$ we finally have $\chi_S(T') = \chi_P(T)$.
\item If $T$ contains $s$ and $\chi_P(T) = 0$, then there must exist some $Z \in \mathcal{Z}$ such that $T$ contains every element of $Z$, and $r_s \in \aff{R_{Z}}$ ensures that $\chi_{\widehat R}(T)=0$ as well.
\end{itemize}
Altogether, condition~(c) is verified and $S$ is a scattering of $p_s$ in $P$.
\end{proof}

Here is the second sufficient condition for the existence of a scattering that we need:

\begin{lemma}\label{l:scattering2}
Let $P \in (\R^d)^X$ be a full-dimensional point configuration and $s \in X$. If $p_s$ lies on exactly two inclusion-minimal flats spanned by points of $P_{|X\setminus\{s\}}$ and these flats intersect in exactly $\{p_s\}$, then there exists a scattering of $p_s$ in $P$.
\end{lemma}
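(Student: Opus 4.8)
The strategy is to adapt the proof of Lemma~\ref{l:scattering1}, the difference being that $p_s$ now lies on two inclusion-minimal flats $\aff{P_{|Z_0}}$ and $\aff{P_{|Z_1}}$ meeting only in $\{p_s\}$, so that $r_s$ in a realization must be pinned down as the intersection of the two corresponding flats rather than placed freely on a single flat. First I would set up the same combinatorial data: assume $X \cap [d+1] = \emptyset$, let $\h_0$ be the union of the hyperplanes spanned by $P_{|X\setminus\{s\}}$ that avoid $p_s$, fix $\varepsilon>0$ so that $A \eqdef \ball{\varepsilon}{p_s}$ is disjoint from $\h_0$, pick $s_1,\dots,s_d \in A$ successively avoiding the hyperplanes spanned by the points chosen so far, pick $s_{d+1} \in A$ avoiding all such hyperplanes and so that $p_s$ lies in the interior of the simplex $\conv\{s_j\}_{j\in[d+1]}$, and set $S = P_{|X\setminus\{s\}} \sqcup \{s_j\}_{j\in[d+1]}$ on $Y = (X\setminus\{s\}) \cup [d+1]$. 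Conditions~(a) and~(b) of the definition of scattering follow exactly as before.

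\textbf{Pinning down $r_s$.} For condition~(c), fix an arbitrary realization $R$ of $\chi_S$. Since $\aff{P_{|Z_0}}$ and $\aff{P_{|Z_1}}$ each contain $p_s$, each intersects $\conv(S_{|[d+1]})$, so by Lemma~\ref{l:chirointer} both $\aff{R_{|Z_0}}$ and $\aff{R_{|Z_1}}$ meet $\Delta \eqdef \conv(R_{|[d+1]})$. The crucial point is that $\aff{R_{|Z_0}} \cap \aff{R_{|Z_1}}$ is a single point lying in $\Delta$: the dimensions of $\aff{R_{|Z_0}}$, $\aff{R_{|Z_1}}$, and $\aff{R_{|Z_0 \cup Z_1}}$ are determined by $\chi_R = \chi_S$, hence match those for $P$; since $\aff{P_{|Z_0}} \cap \aff{P_{|Z_1}} = \{p_s\}$ we have $\dim\aff{P_{|Z_0}} + \dim\aff{P_{|Z_1}} = \dim\aff{P_{|Z_0\cup Z_1}}$, so the two flats in $R$ are complementary inside $\aff{R_{|Z_0\cup Z_1}}$ and meet in exactly one point $r_s$. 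That this point lies in $\Delta$ follows from Lemma~\ref{l:chirointer} again: $p_s \in \conv(S_{|[d+1]})$ and $p_s$ is the unique point of $\aff{P_{|Z_0}} \cap \aff{P_{|Z_1}}$, so encoding membership of this intersection point in the convex hull is chirotope data. We then set $\widehat R \eqdef R \cup \{r_s\}$.

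\textbf{Verifying $\widehat R_{|X}$ realizes $\chi_P$.} Take any $T \in X^{d+1}$ and distinguish three cases exactly as in Lemma~\ref{l:scattering1}. If $T$ avoids $s$, then $\chi_{\widehat R}(T) = \chi_P(T)$ since $R$ realizes $S$ and the two agree on $X\setminus\{s\}$. If $T$ contains $s$ and $\chi_P(T) \neq 0$, then $\aff{P_{|T}}$ avoids $\conv(S_{|[d+1]})$ because $S_{|[d+1]} \subset A$, so by Lemma~\ref{l:chirointer} $\aff{R_{|T}}$ avoids $\Delta \ni r_s$; hence $\chi_{\widehat R}(T)$ equals $\chi_{\widehat R}(T')$ where $T'$ replaces $s$ by (say) index $1$, and then $\chi_{\widehat R}(T') = \chi_S(T') = \chi_P(T)$ as before. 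If $T$ contains $s$ and $\chi_P(T) = 0$, then $p_s$ lies on some flat spanned by the other points of $T$, and since $p_s$ lies on only two inclusion-minimal flats spanned by $P_{|X\setminus\{s\}}$, this forces $T \setminus \{s\}$ to contain all of $Z_0$ or all of $Z_1$; in either case $r_s \in \aff{R_{|Z_0}}$ or $r_s \in \aff{R_{|Z_1}}$ forces $\chi_{\widehat R}(T) = 0$ as well. Wait --- I should be careful here: the case $\chi_P(T)=0$ could also require $\chi_{\widehat R}(T) = 0$ via some \emph{other} degeneracy among $R_{|T\setminus\{s\}}$ that is already forced by $\chi_R = \chi_S$, which is fine; and one should also rule out \emph{spurious} degeneracies, i.e. check that when $\chi_P(T) \neq 0$ we do not accidentally get $\chi_{\widehat R}(T) = 0$ --- but that was handled in the second bullet, since $r_s \notin \aff{R_{|T}}$ there. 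The main obstacle is precisely this last point: I need to argue that placing $r_s$ at the forced intersection $\aff{R_{|Z_0}} \cap \aff{R_{|Z_1}}$ does not create any degeneracy $\chi_{\widehat R}(T) = 0$ not already present in $\chi_P$. In Lemma~\ref{l:scattering1} this was arranged by choosing $r_s$ off an auxiliary union of flats $\h$; here $r_s$ is not free, so one must instead observe that any $(d+1)$-tuple $T \ni s$ with $\chi_{\widehat R}(T) = 0$ gives a flat $F = \aff{R_{|T\setminus\{s\}}}$ containing $r_s$, whose preimage-type argument via Lemma~\ref{l:chirointer} (applied with the roles of $P$ and $R$ and a slight variant encoding "$p_s \in F'$" where $F'$ is the corresponding flat in $P$) shows $F' \ni p_s$, hence $\chi_P(T) = 0$. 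Making this converse direction of Lemma~\ref{l:chirointer}-type reasoning precise for flats of arbitrary codimension through the single forced point $r_s$ is the delicate part, and I would likely phrase it by noting that "$r_s \in \aff{R_{|T\setminus\{s\}}}$" is equivalent to a statement about $\aff{R_{|Z_0}} \cap \aff{R_{|Z_1}} \subseteq \aff{R_{|T\setminus\{s\}}}$, and that all these incidences among flats spanned by subconfigurations of a realization of a fixed chirotope are themselves determined by that chirotope.
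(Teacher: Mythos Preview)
Your argument has a genuine gap at the step where you conclude that $\aff{R_{|Z_0}} \cap \aff{R_{|Z_1}}$ is a single point lying in~$\Delta$. The dimension count $\dim\aff{P_{|Z_0}} + \dim\aff{P_{|Z_1}} = \dim\aff{P_{|Z_0\cup Z_1}}$ does \emph{not} force the corresponding flats in $R$ to meet: two affine flats of complementary dimensions inside their joint span can be disjoint (for instance, two parallel lines in a plane have $1+1=2=\dim\text{span}$ yet do not intersect). Concretely, with $d=2$, $p_a=(1,0)$, $p_b=(2,0)$, $p_c=(0,1)$, $p_d=(0,2)$ and $p_s=(0,0)$, the chirotope of $\{a,b,c,d\}$ is also realized by $r_a=(0,0)$, $r_b=(1,0)$, $r_c=(0,1)$, $r_d=(1,1)$, where the two lines are parallel. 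The extra simplex points $s_1,\dots,s_{d+1}$ do not rule this out: Lemma~\ref{l:chirointer} only guarantees that each of $\aff{R_{|Z_0}}$ and $\aff{R_{|Z_1}}$ meets $\Delta$, and two parallel lines can both cross a triangle. Your fallback claim that ``incidences among flats spanned by subconfigurations are determined by the chirotope'' is exactly what fails in general---see the two hexagons in Figure~\ref{fig:hexagons}, which share a chirotope but have different derived line-intersection patterns.

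The paper circumvents this by a genuinely different construction: instead of one full-dimensional simplex around $p_s$, it places a small simplex $\sigma_i$ \emph{inside each flat $F_i$} containing $p_s$ in its relative interior. Then Lemma~\ref{l:chirointer} applied to $Y=Z_1$ and $Z=S_2$ gives $\aff{R_{|Z_1}} \cap \conv(R_{|S_2}) \neq \emptyset$ directly; since the chirotope also forces $\conv(R_{|S_2}) \subset \aff{R_{|Z_2}}$, this simultaneously proves that the two flats meet and pins $r_s$ inside the small simplex $\conv(R_{|S_2})$ (hence on the correct side of every relevant hyperplane). The price is that the vertices of $\sigma_1,\sigma_2$ are not in general position, so a second stage is needed in which each such vertex is scattered via Lemma~\ref{l:scattering1}. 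Your single-simplex shortcut would avoid that second stage, but the shortcut does not go through.
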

\begin{proof}
Let $Z_1$ and $Z_2$ be the sets of labels of the points of $P_{|X\setminus\{s\}}$ spanning the two inclusion-minimal flats containing $p_s$, which we denote $F_1 $ and $F_2$, respectively. We construct our scattering in two stages.

\bigskip

First, for $i \in \{1,2\}$, we proceed like in the proof of Lemma~\ref{l:scattering1} to pick a simplex $\sigma_i \in (\R^d)^{S_i}$ in $F_i$ that contains $p_s$ in its relative interior. We choose $\sigma_i$ small enough to avoid every hyperplane spanned by $P$ that does not contain $p_s$, and otherwise generic in $F_i$. Let $P' = P_{|Z_1 \cup Z_2} \cup \sigma_1 \cup \sigma_2$.

\medskip

We claim that in every realization $R$ of $\chi_{P'}$, the simplices spanned by $R_{|S_1}$ and $R_{|S_2}$ intersect in a single point, which we denote by $r_s$. Indeed, by two applications of Lemma~\ref{l:chirointer}, we know that $\aff{R_{|Z_1}}$ intersects the interior of $\conv(R_{|S_2})$, and conversely $\aff{R_{|Z_2}}$ intersects the interior of $\conv(R_{|S_1})$. Moreover, any affine dependency in a point configuration translates into the vanishing of some orientations, so the chirotope of a full-dimensional point configuration records the dimension of the affine hull of each of its subsets. It follows that $\aff{R_{|Z_1}}$ and $\aff{R_{|Z_2}}$ are either disjoint or intersect in a single point. It follows that $\aff{R_{|Z_1}} \cap \conv(R_{|S_2})$ coincides with $\aff{R_{|Z_2}} \cap \conv(R_{|S_1})$, and this point is the intersection of the two simplices.

\medskip

At this stage, $P_{|X\setminus\{s\}} \cup \sigma_1\cup \sigma_2$ fulfills properties (b) and (c) expected from a scattering of $p_s$ in $P$. Since every point in $\sigma_i$ lies in $F_i$, it does not, however, satisfy property (a).

\bigskip

Let us fix some $i\in \{1,2\}$. By the genericity of the choice of $\sigma_i$ in $F_i$, its vertices belong to a single inclusion-minimal flat spanned by the configuration, namely $F_i$. We can therefore apply Lemma~\ref{l:scattering1} to scatter, one by one, each of these points in $P_{|X\setminus\{s\}} \cup \sigma_1\cup \sigma_2$. In this way, we obtain a new configuration $\widetilde P$ that has the desired properties. Indeed, we get (a) from \ref{l:scattering1}, and (b) because we just replaced the points of $\sigma_i$ (which were in the neighborhood of $p_s$), by points in their neighborhood. Finally, to get (c), observe that Lemma~\ref{l:scattering1} guarantees that after the last scattering operation we can always recover a realization $R$ of $P_{|X\setminus \{s\}} \cup \sigma_1\cup \sigma_2$. And, as already discussed, this realization can be extended by the intersection $r_s \eqdef \aff{R_{|Z_1}}\cap \aff{R_{|Z_2}}$ which is on the required side of each hyperplane spanned by $R_{|X\setminus \{s\}}$ as it belongs to the convex hull of $R_{|S_1 \cup S_2}$.
\end{proof}

\section{Recursive construction for very generic configurations}\label{s:verygeneric}

For $d \ge 1$, we say that a configuration $P=\{p_0,\dots,p_{d},p_{d+1}\}$ in $\R^d$ is \defn{very generic} if the three following conditions hold:
\begin{itemize}
\item[(i)] $P$ is generic, 
\item[(ii)] $P$ has corank~$1$,
\item[(iii)] $d=1$ or for every $1\leq i\leq d$ the pair $I=\{p_i,p_{d+1}\}$ is good and $P_{\downarrow I}$ is very generic. 
\end{itemize}
If $P$ is very generic, then for every $1\leq i\leq d$ we define $\widehat p_i \eqdef \aff{\{p_i,p_{d+1}\}} \cap \aff{P\setminus\{p_i,p_{d+1}\}}$. Observe that in a very generic configuration $P=\{p_0,\dots,p_{d},p_{d+1}\}$ of dimension~$d\geq 2$, the point $p_{d+1}$ can be reconstructed as the intersection of the lines $\aff{\{p_i,\widehat p_i\}}$ and  $\aff{\{p_j,\widehat p_j\}}$ for any indices $1 \le i < j \le d$.

\begin{figure}[htpb]
	\centering
	\includegraphics[width=.35\linewidth]{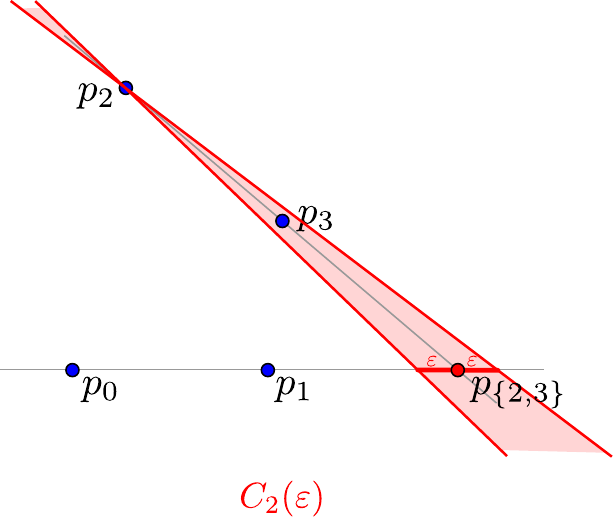}	
	\caption{Example of the definition of $C_2(P,\varepsilon)$.}\label{fig:doublecone}
\end{figure}

\bigskip

Consider a very generic configuration $P=\{p_0,\dots,p_{d},p_{d+1}\}$. It is straightforward that for any generic point~$q$ sufficiently close to $p_{d+1}$, the configuration $\{p_0,\dots,p_{d},q\}$ is also very generic and the point $\widehat q_i \eqdef  \aff{\{p_i,q\}} \cap \aff{P\setminus\{p_i,p_{d+1}\}}$ is close to $\widehat p_i$. To formulate a reverse implication, for $\varepsilon>0$ we define $C_i(P,\varepsilon)$ to be the union of lines through $p_i$ that intersect $\ball{\varepsilon}{\widehat p_i}\cap \aff{P\setminus\{p_i,p_{d+1}\}}$, where $\ball{r}{c}$ denotes the ball with center $c$ and radius~$r$. In other words, $C_i(P,\varepsilon)$ is the set of points $q$ such that $\widehat q_i \eqdef  \aff{\{p_i,q\}} \cap \aff{P\setminus\{p_i,p_{d+1}\}}$ lies within distance $\varepsilon$ from~$\widehat p_i$.

\begin{lemma}\label{l:doublecone}
  For any very generic configuration $P$ in $\R^d$, $d \ge 2$, there exist two constants $\tau(P)>0$ and $\kappa(P)$ such that for every $\varepsilon < \tau(P)$, for every $1\leq i< j\leq d$ we have $C_i(P,\kappa(P)\varepsilon)\cap C_j(P,\kappa(P)\varepsilon)\subset \ball{\varepsilon}{p_{d+1}}$.
\end{lemma}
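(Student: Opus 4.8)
The plan is to reduce the statement to two quantitative facts about the reconstruction map $(\widehat p_i, \widehat p_j) \mapsto p_{d+1}$ described just before the lemma, namely: (1) this map is well-defined and continuous (indeed locally Lipschitz) near $(\widehat p_i, \widehat p_j)$ because in a very generic configuration the lines $\aff{\{p_i,\widehat p_i\}}$ and $\aff{\{p_j,\widehat p_j\}}$ meet transversally in a single point $p_{d+1}$; and (2) the correspondence $q \mapsto \widehat q_i$ is itself locally bi-Lipschitz near $p_{d+1}$. Granting these, if $q \in C_i(P,\delta) \cap C_j(P,\delta)$ then by definition $\widehat q_i \in \ball{\delta}{\widehat p_i}$ and $\widehat q_j \in \ball{\delta}{\widehat p_j}$, and both lines $\aff{\{p_i,\widehat q_i\}}$ and $\aff{\{p_j,\widehat q_j\}}$ pass through $q$; so $q$ is the intersection point produced by the reconstruction map applied to the perturbed data $(\widehat q_i,\widehat q_j)$, hence $\|q - p_{d+1}\|_2 \le L\cdot\sqrt2\,\delta$ for the local Lipschitz constant $L$ of the reconstruction map. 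Setting $\kappa(P) = 1/(L\sqrt2)$ and choosing $\tau(P)$ small enough that everything stays inside the neighborhoods where the Lipschitz bounds are valid (and where $\{p_0,\dots,p_d,q\}$ remains very generic) yields the claim.

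First I would set up coordinates so that $\aff{P\setminus\{p_i,p_{d+1}\}}$ and $\aff{P\setminus\{p_j,p_{d+1}\}}$ are concretely described hyperplanes, and write the intersection point of a line through $p_i$ with a fixed hyperplane as an explicit rational (in fact affine-linear-over-affine-linear) function of the direction of the line; the transversality hypothesis (which is exactly condition (iii) of very-genericity, guaranteeing $p_i \neq \widehat p_i$, the line not parallel to the hyperplane, and the two lines $\aff{\{p_i,\widehat p_i\}}, \aff{\{p_j,\widehat p_j\}}$ distinct and coplanar) ensures the relevant denominators are bounded away from $0$ on a small neighborhood. This makes both maps in (1) and (2) real-analytic with nonvanishing derivative at the base point, so the inverse function theorem gives the local bi-Lipschitz estimates with constants depending only on $P$. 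I would extract $\tau(P)$ as the radius of a ball around $p_{d+1}$ on which: the configuration stays very generic, the line $\aff{\{p_i,q\}}$ is never parallel to $\aff{P\setminus\{p_i,p_{d+1}\}}$ for each $i$, and the Lipschitz bounds hold; and then $\kappa(P)$ is the reciprocal (up to the $\sqrt2$) of the product of the two Lipschitz constants.

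The main obstacle I anticipate is not any single estimate but organizing the constants cleanly: the lemma is stated with a single pair of constants $\tau(P),\kappa(P)$ that must work simultaneously for all pairs $1 \le i < j \le d$, so one must take the worst (smallest $\tau$, largest relevant $\kappa$) over the finitely many pairs, and one must be careful that $C_i(P,\kappa\varepsilon)$ is defined via the line through the \emph{fixed} point $p_i$ hitting the \emph{fixed} hyperplane $\aff{P\setminus\{p_i,p_{d+1}\}}$ — so the only moving object is $q$, which keeps the argument genuinely a statement about two one-parameter (well, $(d-1)$-parameter) families of lines through two fixed centers. A secondary point of care: one should check $q$ is forced to lie in the affine $2$-plane $\aff{\{p_i,\widehat q_i, \widehat q_j\}}$ — equivalently that the two perturbed lines remain coplanar — which again follows by continuity from the unperturbed coplanarity, shrinking $\tau(P)$ if necessary. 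None of these steps is deep; the content is entirely that transversal intersection of lines is a stable, Lipschitz operation, quantified uniformly over the finite data of $P$.
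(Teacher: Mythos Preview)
Your approach is sound in outline and genuinely different from the paper's. The paper argues with explicit Euclidean objects: it encloses each $C_k(P,\delta)$ in a circular cone $C(p_k,p_{d+1},\alpha)$ of half-angle $\alpha\approx\delta/r$, then uses the inscribed angle theorem to show that inside a fixed ball $B_r(p_{d+1})$ such a cone lies in a thin cylinder around $\aff{\{p_k,p_{d+1}\}}$, while \emph{outside} $B_r(p_{d+1})$ the two cones (for $k=i,j$) are disjoint because the axes $\aff{\{p_i,p_{d+1}\}}$ and $\aff{\{p_j,p_{d+1}\}}$ are not parallel. This near/far dichotomy is exactly what your inverse-function-theorem reasoning must replace, and here your write-up has a gap: you invoke Lipschitz bounds on a neighborhood ``where $\{p_0,\dots,p_d,q\}$ remains very generic'', but that presupposes $q$ is already near $p_{d+1}$, which is what you are proving. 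The fix is to note that the direction of $\aff{\{p_i,q\}}$ equals that of $\aff{\{p_i,\widehat q_i\}}$ and is hence controlled by $\widehat q_i$ alone, with no hypothesis on $q$; projecting orthogonally onto the $2$-plane $\Pi=\aff{\{p_i,p_j,p_{d+1}\}}$, the two projected lines still pass through $p_i,p_j$ with nearly-original directions, and a $2{\times}2$ linear system with determinant bounded away from zero recovers the parameter $t$ in $q=p_i+t\,v_i$, hence $q$, Lipschitzly in $\delta$ --- globally, not just locally.

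Two smaller corrections. The single map $q\mapsto\widehat q_i$ is \emph{not} bi-Lipschitz: it collapses the whole line $\aff{\{p_i,q\}}$, so its differential has a one-dimensional kernel. What you need (and have) is that $q\mapsto(\widehat q_i,\widehat q_j)$ is an immersion at $p_{d+1}$, since those two kernels are the transversal directions of $\aff{\{p_i,p_{d+1}\}}$ and $\aff{\{p_j,p_{d+1}\}}$. And the coplanarity of the two perturbed lines is automatic, not a continuity fact: both contain $q$ by construction. With these repairs your argument goes through; it trades the paper's explicit trigonometry for a transversality/stability principle, which is arguably cleaner but makes the constants less visible.
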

\begin{proof}
Let $x,y$ be distinct points in $\R^d$. For any real $\alpha \in [0,\pi]$ we let $C(x,y,\alpha)$ denote the union of lines through $x$ that make an angle at most $\alpha$ with $\aff{\{x,y\}}$. For any real $r \ge 0$ we let $D(x,y,r)$ denote the set of points within distance $r$ of the line $\aff{\{x,y\}}$. We note that  $C(x,y,\alpha)$ is a cone of revolution with apex $x$ and axis $\aff{\{x,y\}}$, and that $D(x,y,r)$ is a cylinder of revolution with axis $\aff{\{x,y\}}$.

\begin{figure}[htpb]
	\centering
	\includegraphics[width=.85\linewidth]{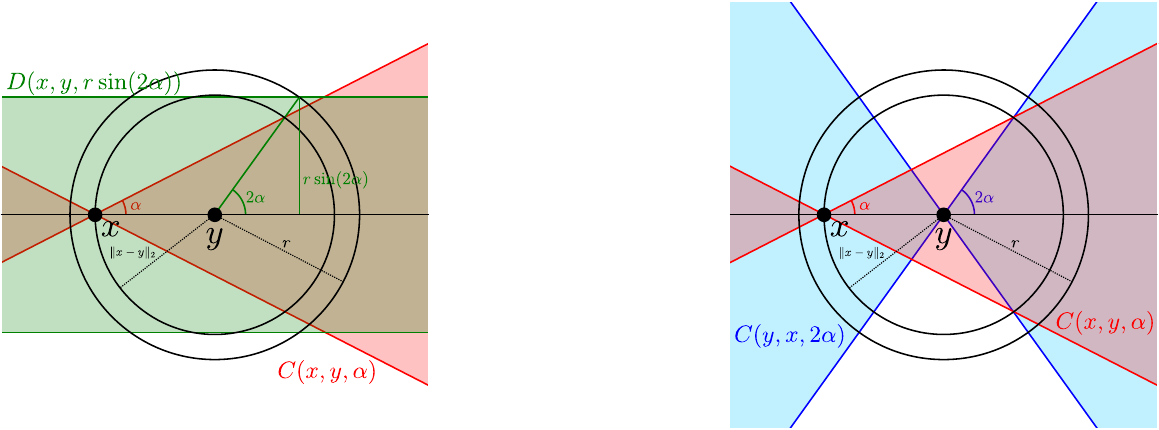}	
	\caption{Sketch for Equation~\ref{eq:ccb}}\label{fig:coneproof}
\end{figure}

By the inscribed angle theorem and elementary planar geometry arguments (see Figure~\ref{fig:coneproof}), we have that for every $r \ge \|x-y\|_2$,
\begin{equation}\label{eq:ccb}
\pth{C(x,y,\alpha) \cap B_{r}(y)} \subset D\pth{x,y,r\sin(2\alpha)} 
\qquad \text{and} \qquad 
\pth{C(x,y,\alpha) \setminus B_{r}(y)} \subset C(y,x,2\alpha).\end{equation}
We now fix distinct indices $1 \le i, j \le d$ and use the above inclusions to analyze the intersection of $C_i(P,\delta)$ and $C_j(P,\delta)$ for $\delta$ small enough. Specifically, we let $r \eqdef \max(\|p_i-p_{d+1}\|_2,\|p_j-p_{d+1}\|_2)$ and let $\alpha \eqdef \sin^{-1}(\delta/r)$. Since the points $p_i,p_j$ and $p_{d+1}$ are pairwise distinct, $\alpha$ is well-defined and $\alpha \to 0$ as $\delta \to 0$. Note that for $k \in \{i,j\}$ we have $C_k(P,\delta) \subset C(p_k,p_{{d+1}},\alpha)$, so that
\[ \begin{array}{rcl}
C_i(P,\delta) \cap B_{r}(p_{d+1}) & \subset & D\pth{{p_i,p_{d+1}},r\sin(2\alpha)}\\
C_j(P,\delta) \cap B_{r}(p_{d+1}) & \subset & D\pth{{p_j,p_{d+1}},r\sin(2\alpha)}\\
C_i(P,\delta) \setminus B_{r}(p_{d+1}) & \subset & C(p_{d+1},p_i,2\alpha)\\
C_j(P,\delta) \setminus B_{r}(p_{d+1}) & \subset & C(p_{d+1},p_j,2\alpha)\\
\end{array}
\]
Since $P$ is very generic, the points $p_i$, $p_j$ and $p_{d+1}$ are not aligned. It follows that for $\alpha$ small enough, the cones $C(p_{d+1},p_i,2\alpha)$ and $C(p_{d+1},p_j,2\alpha)$ intersect in exactly $\{p_{d+1}\}$. This implies that for $\delta$ small enough, the regions $C_i(P,\delta)$ and $C_j(P,\delta)$ do not intersect outside $B_r(p_{d+1})$ and their intersection is contained in the intersection $D({p_i,p_{d+1}},r\sin(2\alpha)) \cap D({p_j,p_{d+1}},r\sin(2\alpha))$. For any fixed $\varepsilon>0$, when $\alpha>0$ is small enough we have 
\begin{equation}\label{eq:DcapDinB}
    D({p_i,p_{d+1}},r\sin(2\alpha)) \cap D({p_j,p_{d+1}},r\sin(2\alpha)) \subset B_\varepsilon(p_{d+1}),
\end{equation} 
A careful analysis of the dependencies between $\alpha$, $\delta$, and $\varepsilon$ shows that whenever $\varepsilon$ is smaller than a constant that depends only on $P$, then there is a constant $\kappa$ depending only on $P$, such that for $\delta\leq \kappa \varepsilon$ we have that $\alpha = \sin^{-1}(\delta/r)$ satisfies Equation~\eqref{eq:DcapDinB}. The statement follows.
\end{proof}

\begin{lemma}\label{l:eps}
For any $d \ge 2$ and any very generic configuration $R = \{p_0,\dots,p_{d},p_{d+1}\}$ with $p_0,\dots,p_{d}$ an affine basis, there exist constants $\tau(R)>0$ and $c(R)$ such that the following holds. For any point configuration $P\in(\R^d)^X$ containing $R$ and for every $0<\varepsilon<\tau(R)$, there exists a generic extension of $P$ of size at most $c(R)\log\frac1{\varepsilon}$ whose chirotope cannot be realized on top of $\{p_0,p_1, \dots,p_{d},q_{d+1}\}$ for any $q_{d+1}$ with $\|p_{d+1}-q_{d+1}\|_2>\varepsilon$.
\end{lemma}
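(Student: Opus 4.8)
The plan is to mimic the inductive structure of the proof of Proposition~\ref{p:main}: peel off one dimension at a time using the ``$\downarrow$'' operation of Section~\ref{s:corank1}, but now quantitatively, feeding Lemma~\ref{l:asym2witness} at the bottom of the recursion, gluing the recursive outputs together with Lemma~\ref{l:doublecone}, and inserting scatterings to restore genericity.

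\emph{Base case $d=2$.} Here $R=\{p_0,p_1,p_2,p_3\}$ is very generic, so $p_0,p_1,p_2$ is an affine basis and, for $i\in\{1,2\}$, the point $\widehat p_i\eqdef\aff{\{p_i,p_3\}}\cap\aff{R\setminus\{p_i,p_3\}}$ is well defined and distinct from the three points spanning those flats. First I invoke Lemma~\ref{l:doublecone} to fix $\tau(R)>0$ and $\kappa(R)$ with $C_1(R,\kappa(R)\varepsilon)\cap C_2(R,\kappa(R)\varepsilon)\subseteq\ball{\varepsilon}{p_3}$ for every $\varepsilon<\tau(R)$. Given $P\supseteq R$ and such an $\varepsilon$, I adjoin to $P$ the points $\widehat p_1,\widehat p_2$ (fresh labels $a_1,a_2$) and apply Lemma~\ref{l:asym2witness} twice, to the collinear triples $(p_0,p_2,\widehat p_1)$ and $(p_0,p_1,\widehat p_2)$, each time with accuracy $\kappa(R)\varepsilon$; this produces a constructible extension of size $|P|+O(\log\frac1{\varepsilon})$, the constant depending on $R$ through $\kappa(R)$ and the arithmetic complexities in Lemma~\ref{l:asym2witness}. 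In any realization of the resulting chirotope carrying $p_0,p_1,p_2$ on labels $0,1,2$, the label $a_i$ is carried by the point $\aff{\{p_i,q_3\}}\cap\aff{R\setminus\{p_i,p_3\}}$ (the chirotope records the two collinearities certifying this), which Lemma~\ref{l:asym2witness} places within $\kappa(R)\varepsilon$ of $\widehat p_i$; hence $q_3\in C_1(R,\kappa(R)\varepsilon)\cap C_2(R,\kappa(R)\varepsilon)\subseteq\ball{\varepsilon}{p_3}$. To turn this constructible extension into a \emph{generic} extension of $P$, I scatter, one at a time and in reverse order of construction, every added point lying on a line spanned by the rest; the genericity clause of the Von~Staudt gadgets (used with forbidden sets containing all earlier points) ensures each such point lies on at most two inclusion-minimal flats, both lines, meeting exactly in that point, so Lemma~\ref{l:scattering1} or Lemma~\ref{l:scattering2} applies. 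Scattering a point with label outside $\{0,1,2,3\}$ cannot destroy the separating property: by clause~(c) of a scattering, any realization of the scattered chirotope extends by a single point into a realization of the previous one carrying the same points on labels $0,1,2,3$. Each scattering adds $O(1)$ points and there are $O(\log\frac1\varepsilon)$ of them, so the final generic extension $\widehat P$ satisfies $|\widehat P\setminus P|\le c(R)\log\frac1\varepsilon$ after absorbing additive constants (shrinking $\tau(R)$ if necessary).

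\emph{Inductive step $d\ge3$.} Assume the lemma in dimension $d-1$, and let $R=\{p_0,\dots,p_{d+1}\}$ be very generic with $p_0,\dots,p_d$ an affine basis; invoke Lemma~\ref{l:doublecone} for $\tau(R),\kappa(R)$. For $i\in\{1,2\}$ put $I\eqdef\{p_i,p_{d+1}\}$, let $H_i\eqdef\aff{R\setminus I}$ (a hyperplane, with $p_i,p_{d+1}\notin H_i$ by genericity), and $R_i\eqdef R_{\downarrow I}=(R\setminus I)\cup\{\widehat p_i\}\subset H_i$, which is very generic with $R\setminus I$ as an affine basis of $H_i$ and $\widehat p_i$ as the extra point. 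Applying the induction hypothesis inside $H_i\cong\R^{d-1}$ to the configuration $R_i$, with accuracy $\kappa(R)\varepsilon$, yields a generic extension $\widehat{R_i}$ of $R_i$ inside $H_i$, of size $|R_i|+O(\log\frac1\varepsilon)$, such that in every realization of $\chi_{\widehat{R_i}}$ carrying $R\setminus I$ on its fixed labels the label of $\widehat p_i$ is carried within $\kappa(R)\varepsilon$ of $\widehat p_i$. I then form $\widetilde P$ by adjoining to $P$, for each $i$, the points of $\widehat{R_i}$ other than $p_0,\dots,p_d$ (namely $\widehat p_i$ and the $H_i$-internal extension points). In a realization $\widehat Q$ of $\chi_{\widetilde P}$ carrying $p_0,\dots,p_d$ on labels $0,\dots,d$: the $\widehat{R_i}$-labelled points of $\widehat Q$ are confined to a common hyperplane (all relevant $(d+1)$-fold orientations vanish in $\chi_{\widetilde P}$), necessarily $H_i$ since it contains the $d$ fixed basis points; restricting $\widehat Q$ to those labels and orienting $H_i$ via the fixed point $p_i\notin H_i$ gives a realization of $\chi_{\widehat{R_i}}$, so the label of $\widehat p_i$ is carried by a point $\widehat q_i$ with $\|\widehat q_i-\widehat p_i\|_2\le\kappa(R)\varepsilon$; and since $\chi_{\widetilde P}$ records the collinearity of the labels of $p_i$, $p_{d+1}$, $\widehat p_i$ while $q_{d+1}\notin H_i$, we get $\widehat q_i=\aff{\{p_i,q_{d+1}\}}\cap H_i$. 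Thus $q_{d+1}\in C_1(R,\kappa(R)\varepsilon)\cap C_2(R,\kappa(R)\varepsilon)\subseteq\ball{\varepsilon}{p_{d+1}}$ by Lemma~\ref{l:doublecone}. Finally I scatter the non-generic points of $\widetilde P\setminus P$: the $H_i$-internal points each lie on the single inclusion-minimal flat $H_i$ (by genericity of $\widehat{R_i}$ within $H_i$), and $\widehat p_i$ lies on $H_i$ and on the line $\aff{\{p_i,p_{d+1}\}}$, which meets $H_i$ only at $\widehat p_i$; so Lemmas~\ref{l:scattering1} and~\ref{l:scattering2} apply, and clause~(c) again preserves the separating property because no label in $\{0,\dots,d+1\}$ is touched. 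The size count gives $|\widehat P\setminus P|\le c(R)\log\frac1\varepsilon$ for suitable $c(R)$ and $\tau(R)$.

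\emph{Main obstacle.} The delicate part is genericity bookkeeping. Both the recursion and the Von~Staudt gadgets naturally create points lying on flats spanned by the rest, and one must verify that, after choosing the gadgets generically, fixing a scattering order, and taking into account the points of $P$ itself, each added point lies on at most one, or on exactly two transversally meeting, inclusion-minimal flats, so that Lemmas~\ref{l:scattering1}--\ref{l:scattering2} apply and stay applicable as earlier points are perturbed. This must be done while keeping the number of added points logarithmic in $1/\varepsilon$, which forces every loss of accuracy — the factor $\kappa(R)$, the additive constants of Lemma~\ref{l:asym2witness}, the points spent by each scattering — to be folded into the logarithm as a constant rather than allowed to compound through the $d-1$ levels of recursion.
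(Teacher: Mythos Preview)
Your proposal is correct and follows essentially the same route as the paper's proof: induction on $d$ via Lemma~\ref{l:doublecone}, with Lemma~\ref{l:asym2witness} at the bottom and scatterings (Lemmas~\ref{l:scattering1}--\ref{l:scattering2}) to restore genericity, processed in reverse construction order. The only point worth flagging is that your claim ``the $H_i$-internal points each lie on the single inclusion-minimal flat $H_i$ (by genericity of $\widehat{R_i}$ within $H_i$)'' needs one extra sentence: genericity within $H_i$ only rules out subflats spanned inside $H_i$, so you must also perturb the extension points within $H_i$ to avoid the finitely many flats spanned by the rest of $P$---the paper does this explicitly, and it is exactly the kind of bookkeeping you already anticipate in your ``Main obstacle'' paragraph.
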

\begin{proof}
By Lemma~\ref{l:doublecone}, there exist $\tau(R)$ and $\kappa(R)$ such that for any $\varepsilon < \tau(R)$ and any 
$0<\delta<\kappa(R)\varepsilon$ we have $C_1(P,\delta)\cap C_2(P,\delta)\subseteq \ball{\varepsilon}{p_{d+1}}$. We first prove the statement without the complexity analysis, using an induction on $d$.

\bigskip

\underline{Base case} ($d=2$). The points $p_0,p_2,\widehat p_1$ are distinct and aligned, so Lemma~\ref{l:asym2witness} yields a constructible extension $\widehat P_{-2}$ of $\{p_0,p_1,p_2,\widehat p_1\}$ whose chirotope cannot be realized on top\footnote{Note that for the sake of readability, here and below we treat $\widehat \cdot_i$ as a label.} of $\{p_0,p_2,\widehat q_1\}$ for any $\widehat q_1\notin \ball{\delta}{\widehat p_1}$. Similarly, $p_0$, $p_1$ and $\widehat p_2$ are distinct and aligned, and Lemma~\ref{l:asym2witness} yields a constructible extension $\widehat P_{-1}$ of $\widehat P_{-2} \cup \{\widehat p_2\}$ whose chirotope cannot be realized on top of $\{p_0,p_1,\widehat q_2\}$ for any $\widehat q_2\notin \ball{\delta}{\widehat p_2}$. We let $\widehat P_0 = \widehat P_{-1} \cup \{p_3\}$. 

We next scatter $\widehat P_0$ by repeated applications of Lemmas~\ref{l:scattering1} and~\ref{l:scattering2}. Let us call a point $p_i \in \widehat P_0$ \defn{undesirable} if it is not in $P$ and lies on at least one line spanned by two points of $\widehat P_0$ with lower indexes. We let $k$ denote the number of undesirable points of $\widehat P_0$, and denote their labels $i_1 > i_2 > \ldots > i_k$. For $1 \le j \le k$, we define $\widehat P_j$ to be a scattering of $p_{i_j}$ in $\widehat P_{j-1}$, as given by Lemmas~\ref{l:scattering1} or~\ref{l:scattering2} (according to the number of lines that contain $p_{i_j}$ and are spanned by points of $\widehat P_j$ with indexes lower than $i_j$). We end up with some point set $\widehat P_\varepsilon \eqdef \widehat P_k$ which, we claim, is a generic extension of $P$. Indeed, on the one hand, property~(a) of scatterings ensures that any point that is not generic in $\widehat P_\varepsilon$ must come from $\widehat P_0$, and on the other hand, any three points in $\widehat P_0$ that are aligned and not all in $P$ contain at least one undesirable point, which is therefore no longer in $\widehat P_\varepsilon$.

We claim that the chirotope of $\widehat P_\varepsilon$ cannot be realized on top of $\{p_0,p_1,p_2,q_3\}$ for any $q_3 \notin B_\varepsilon(p_3)$. Indeed, when $\{p_0,p_1,p_2,q_3\}$ is not very generic, this follows from the fact that $\widehat P$ contains $\widehat p_1$ and $\widehat p_2$; when $\{p_0,p_1,p_2,q_3\}$ is very generic, this follows from the fact that $\widehat q_1 \notin \ball{\delta}{\widehat p_1}$ or $\widehat q_2 \notin \ball{\delta}{\widehat p_2}$ by Lemma~\ref{l:doublecone}.

\bigskip

\underline{Inductive step} ($d \ge 3$). Since $R$ is very generic, $R_1 \eqdef \{p_0,\widehat p_1,p_2,p_3, \ldots, p_d\}$ also is, and in particular it has corank~$1$. By induction, within $\aff{R_1}$ there is a generic extension $\widehat R_1$ of the $(d-1)$-dimensional configuration $R_1$ that cannot be realized on top of $(R_1\setminus \{\widehat p_1\})\cup \{\widehat q_1\}$ whenever $\|\widehat p_1-\widehat q_1\|_2>\varepsilon$, that is whenever $q_{d+1} \notin C_1(\delta)$. Since $\widehat R_1$ is a generic extension of $R_1$ (within $\aff{R_1}$), we can assume up to perturbation within $\aff{R_1}$, that $\aff{R_1}$ is the only inclusion-minimal flat spanned by $P$ that contains any point of $\widehat R_1 \setminus R_1$. 

We next scatter the points of $\widehat P_{-1} \eqdef P \cup \widehat R_1$. Thanks to the perturbation applied above, $\widehat P_{-1}$ only fails to be a generic extension of $P$ in two ways. First, each point $p$ of $\widehat P_{-1}\setminus R_1$ lies on the hyperplane $\aff{R_1}$; this is the only flat spanned by $\widehat P_{-1}\setminus \{p\}$ that contains $p$, so we can apply Lemma~\ref{l:scattering1} to scatter $p$ in $\widehat P_{-1}$; successively treating each point $p$ of $\widehat P_{-1}\setminus R_1$ in this way yields a generic extension $\widehat P_0$ of $P\sqcup \{\widehat p_1\}$ (in $\R^d$). Next, the point $\widehat p_1$ lies on the hyperplane $\aff{R_1}$ and on the line $\aff{\{p_1,p_{d+1}\}}$; since $R_1$ has corank~$1$, Lemma~\ref{l:degenwitness} ensures that $\widehat p_1$ is on a unique inclusion-minimal flat spanned by points of $R_1\setminus\{\widehat p_1\}$, so Lemma~\ref{l:scattering2} yields a scattering $\widehat P_1$ of $\widehat p_1$ in $\widehat P_0$. Now, $\widehat P_1$ is a generic extension of $P$ and it retains the property that its chirotope cannot be realized on top of $\{p_0,p_1, \dots,p_{d},q_{d+1}\}$  for any $q_{d+1}$ with $q_{d+1} \notin C_1(\delta)$.

Similarly, we can construct a generic extension $\widehat P_\varepsilon$ of $\widehat P_1$ whose  chirotope cannot be realized on top of $\{p_0,p_1, \dots,p_{d},q_{d+1}\}$  for any $q_{d+1}$ with $q_{d+1} \notin C_2(\delta)$. By Lemma~\ref{l:doublecone}, for every $q_{d+1} \notin B_\varepsilon(p_{d+1})$, there exists $i \in \{1,2\}$ such that $q_{d+1} \notin C_i(\delta)$. The chirotope of $\widehat P_\varepsilon$ can therefore not be realized on top of $\{p_0,p_1, \dots,p_{d},q_{d+1}\}$ for any $q_{d+1}$ for any $q_{d+1} \notin B_\varepsilon(p_{d+1})$. This concludes the induction step.

\bigskip

It remains to bound the size of $\widehat P_\varepsilon\setminus P$. In the base case, we apply twice Lemma~\ref{l:asym2witness}, each contributing at most $O(\log\frac1{\delta})=O(\log\frac1{\varepsilon})$ points. Similarly, in the inductive step we apply twice the result in dimension $d-1$, each contributing at most $O(\log\frac1{\delta})=O(\log\frac1{\varepsilon})$ points.  This adds up to the announced bound.
\end{proof}

\begin{remark}\label{r:perturbhighd}
Following Remark~\ref{r:perturb}, note that the bound of $20\log\frac1{\varepsilon}+O(1)$ for the number of points added in each application of Lemma~\ref{l:asym2witness} is also valid if we slightly perturb $p_{d+1}$ within its neighborhood. Therefore, there is a threshold $\tau = \tau(R_{p_{d+1}})$ and a constant $\tilde c(R_{p_{d+1}})$ such that for every $\varepsilon \le \tau$ and for every $g \in \ball{\varepsilon}{p_{d+1}}$, we have $c(R_g)\leq \tilde c(R_{p_{d+1}})$, where $R_g= \{p_0,\dots,p_{d},g\}$.
\end{remark}

\section{Perturbations of configurations that are not very generic}\label{s:perturb}

We now turn our attention to point configurations that are {\em not} very generic. Let $S=(s_1,s_2,\dots,s_{d-1})$ be a sequence of points in $\R^d$ and $p \in \R^d$. If $\aff{\{s_1,s_2,\dots,s_{d-1},p\}}$ is a hyperplane, we put
\[ H_{S,p}^+=\{q\in\R^d\ : \ \chi(s_1,s_2,\dots,s_{d-1},p,q)>0\} \quad \text{and}\quad H_{S,p}^-=\{q\in\R^d\ : \ \chi(s_1,s_2,\dots,s_{d-1},p,q) < 0\}\]
for the two {\em open} halfspaces it bounds, and we define for $\varepsilon>0$ 
\begin{align*}
	H_{S,p}^+(\varepsilon)&\eqdef 	\bigcup\nolimits_{p'\in H_{S,p}^-\cap B_{\varepsilon}(p)} H_{S,p'}^+,&
	H_{S,p}^-(\varepsilon)&\eqdef 	\bigcup\nolimits_{p'\in H_{S,p}^+\cap B_{\varepsilon}(p)} H_{S,p'}^-.
\end{align*}
Note that if $\aff{\{s_1,s_2,\dots,s_{d-1},p\}}$ is a hyperplane, then the projection $\pi:\R^d \to \R^2$ parallel to $R \eqdef \aff{\{s_1,s_2,\dots,s_{d-1}\}}$ maps $H_{S,p}^\pm(\varepsilon)$ to the union of two open half-planes bounded by lines through $\pi(R)$, see Figure~\ref{fig:wedgedhalfspaces}.

\begin{figure}[htpb]
\centering
\includegraphics[width=.85\linewidth]{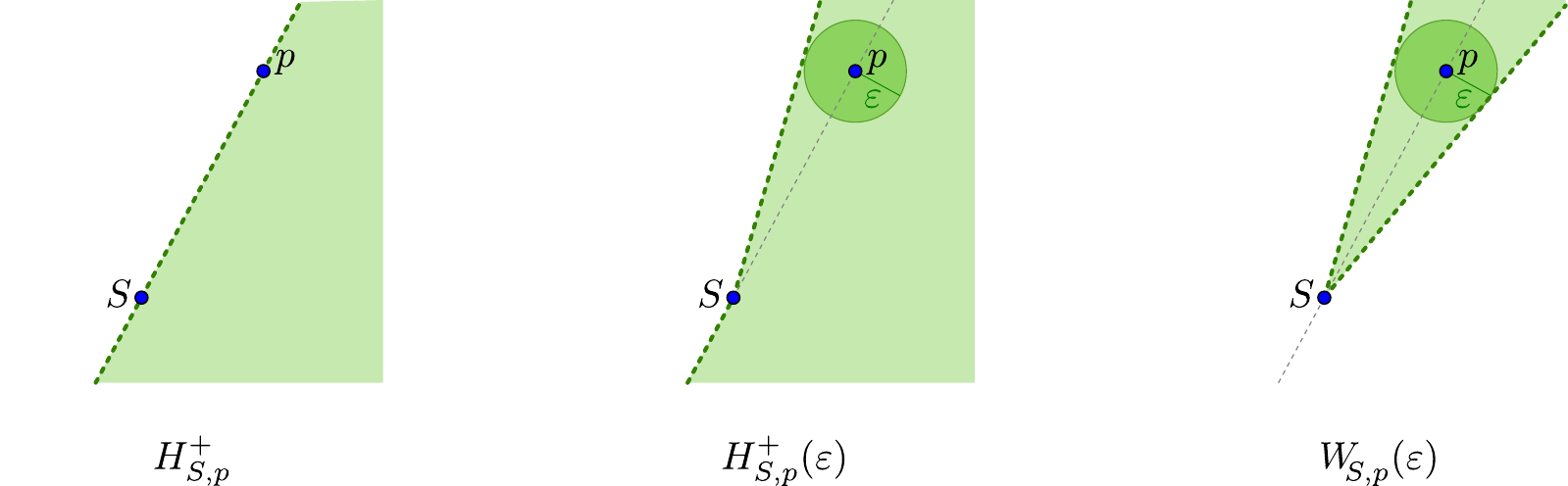}
\caption{The images of $H_{S,p}^+$, $H_{S,p}^+(\varepsilon)$, and $W_{S,p}(\varepsilon)$ through the projection parallel to $\aff{\{s_1,s_2,\dots,s_{d-1}\}}$.\label{fig:wedgedhalfspaces}}
\end{figure}

Now, given a {\em set} $S$ of $d-1$ points in $\R^d$ and a point $p \notin \aff{S}$, we fix some order on $S$ and define the {\em wedge} $W_{S,p}(\varepsilon) \eqdef H_{S,p}^-(\varepsilon) \cap H_{S,p}^+(\varepsilon)$ for any $\varepsilon>0$. Note that $W_{S,p}(\varepsilon)$ does not depend on the choice of order on $S$.

\begin{lemma}\label{l:hyperplanewedges}
  For any full-dimensional configuration $P$ in $\R^d$ with $d \ge 2$, and $p\in P$,  there exist two constants $\tau(P)>0$ and $\kappa(P)$ such that for every $0<\varepsilon < \tau(P)$,
  we have  $\bigcap_{S\in \mathcal{S}}W_{S,p}(\kappa(P)\varepsilon)\subset \ball{\varepsilon}{p}$, where $\mathcal{S}$ is the set of all affinely independent $(d-1)$-element subsets of $P\setminus \{p\}$.
\end{lemma}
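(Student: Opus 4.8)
The plan is to reduce the statement to a finite intersection of wedges and then to argue, wedge by wedge, that each one is a cone-like region pinching down to $p$ as $\varepsilon \to 0$, with control of the geometry uniform over the finitely many choices of $S$. First I would fix $P$ and $p$, and note that the set $\mathcal S$ of affinely independent $(d-1)$-element subsets of $P \setminus \{p\}$ is finite, so it suffices to find, for each $S \in \mathcal S$, a threshold $\tau_S>0$ and a constant $\kappa_S$ so that the intersection of all the $W_{S,p}(\kappa_S\varepsilon)$ is contained in $\ball{\varepsilon}{p}$ for $\varepsilon<\tau_S$; then take $\tau(P)=\min_S \tau_S$ and $\kappa(P)=\max_S\kappa_S$. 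Since $P$ is full-dimensional and $p\notin\aff S$ for each $S\in\mathcal S$, the hyperplanes $\aff(S\cup\{p\})$ are genuine hyperplanes, so the wedges are well-defined.

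Next I would analyze a single wedge $W_{S,p}(\delta)$ via the projection $\pi\colon\R^d\to\R^2$ parallel to $R=\aff S$, exactly as in the remark preceding the lemma (Figure~\ref{fig:wedgedhalfspaces}): $\pi(W_{S,p}(\delta))$ is a bounded wedge (the intersection of two open half-planes) with apex $\pi(p)$, whose angular width is $O(\delta)$ as $\delta\to0$, because each $H^{\pm}_{S,p}(\delta)$ is the union of half-spaces $H^{\pm}_{S,p'}$ for $p'\in\ball{\delta}{p}$ on the appropriate side, and rotating the bounding hyperplane by moving $p$ a distance $\delta$ changes the projected bounding line's slope by an amount comparable to $\delta/\dist(p,\aff S)$. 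Thus $\pi(W_{S,p}(\delta))$ is contained in a planar cone of half-angle $C_S\delta$ with apex $\pi(p)$. The key point is now that the hyperplanes $\{\aff(S\cup\{p\}) : S\in\mathcal S\}$ do \emph{not} all contain a common line through $p$ other than degenerate ones that cannot occur: since $P$ is full-dimensional, $P\setminus\{p\}$ affinely spans either $\R^d$ or a hyperplane through (a translate of) $p$; in the first case one can choose $d$ different bases $S$ so that the hyperplanes $\aff(S\cup\{p\})$ meet exactly in $\{p\}$; in the second case the single hyperplane $\aff(P\setminus\{p\})$ already does not contain $p$, and one additionally uses subsets $S$ obtained by trading one point of $P\setminus\{p\}$ to tilt the hyperplane. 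Either way, there is a finite subfamily of $\mathcal S$ whose hyperplanes $\aff(S\cup\{p\})$ intersect in exactly $\{p\}$, and whose \emph{projections} under a well-chosen auxiliary projection separate directions well enough that the common cone they carve has apex $p$ and tends to $\{p\}$ in any bounded region as the half-angles shrink.

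Concretely, I would package the last step like the proof of Lemma~\ref{l:doublecone}: for a fixed small $\varepsilon$, choose $\delta$ proportional to $\varepsilon$ so that each $W_{S,p}(\delta)$ lies in a cone of revolution $C(p,v_S,C_S\delta)$ about the direction $v_S$ determined by $\aff(S\cup\{p\})$ and $R$; then the inclusions analogous to \eqref{eq:ccb} bound $W_{S,p}(\delta)\cap\ball{r}{p}$ inside a ``slab'' of width $O(\delta)$ about a flat through $p$, and $W_{S,p}(\delta)\setminus\ball{r}{p}$ inside a wider cone about $p$; intersecting over the chosen finite subfamily, the ``outside'' parts meet only at $p$ once the cones are narrow enough (the directions do not lie on a common proper flat), and the ``inside'' parts form an intersection of slabs whose common region around $p$ has diameter $O(\delta)=O(\varepsilon)$. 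A uniform choice of $\kappa(P)$ comes from taking the worst constant over the finitely many $S$ involved, and $\tau(P)$ from requiring $r\ge\max_S\|p-\text{(foot of }p\text{ on }\aff S)\|$ and the narrow-cone condition to hold simultaneously. The main obstacle I anticipate is the direction-separation argument: one must verify that, for a full-dimensional $P$, the hyperplanes $\aff(S\cup\{p\})$ together with the directions $R=\aff S$ cannot conspire so that all the wedges $W_{S,p}(\delta)$ share a common ray out of $p$; handling the boundary case where $P\setminus\{p\}$ is itself contained in a hyperplane (so many $S$ give the \emph{same} hyperplane $\aff(P\setminus\{p\})$, and one must exploit the tilted ones carefully) is the delicate point, and is where I would spend most of the care.
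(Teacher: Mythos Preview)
Your central geometric picture is off: the wedge $W_{S,p}(\delta)$ does \emph{not} pinch down to $p$. Under the projection $\pi$ parallel to $R=\aff S$, the bounding hyperplanes of $H^\pm_{S,p'}$ all contain $\aff S$, so the projected wedge has its apex at $\pi(R)$, not at $\pi(p)$ (this is what Figure~\ref{fig:wedgedhalfspaces} shows). The point $p$ lies strictly in the interior of $W_{S,p}(\delta)$, and the wedge widens as one moves away from $\aff S$; so ``$\pi(W_{S,p}(\delta))$ is contained in a planar cone of half-angle $C_S\delta$ with apex $\pi(p)$'' is false, and the cone-of-revolution machinery you borrow from Lemma~\ref{l:doublecone} (in particular the ``outside $\ball{r}{p}$'' part) does not apply. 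Your parenthetical observation that $W_{S,p}(\delta)\cap\ball{r}{p}$ sits in a slab of width $O(\delta)$ about the hyperplane $\aff(S\cup\{p\})$ \emph{is} the correct local picture, and that alone is what drives the proof.

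The paper's argument is simpler and sidesteps your case analysis entirely: since $P$ is full-dimensional, one completes $\{p\}$ to an affine basis $\{p\}\cup B\subset P$ and takes $\mathcal B$ to be the $(d-1)$-element subsets of $B$. The $d$ hyperplanes $\aff(S\cup\{p\})$, $S\in\mathcal B$, meet exactly in $\{p\}$, so by compactness there is $\alpha(P)>0$ with every $q\in\sphere{1}{p}$ at distance $\ge\alpha(P)$ from some such hyperplane; scaling gives the same on $\sphere{\varepsilon}{p}$ with bound $\alpha(P)\varepsilon$. Combining this with the slab inclusion (your correct observation, with the Thales-type estimate of Figure~\ref{fig:thales}) shows $\bigcap_{S\in\mathcal B}W_{S,p}(\kappa\varepsilon)$ misses $\sphere{\varepsilon}{p}$ for suitable $\kappa$, and hence lies in $\ball{\varepsilon}{p}$. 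Note that this works uniformly whether or not $P\setminus\{p\}$ spans $\R^d$, so your worry about the ``boundary case'' evaporates. Two smaller issues: ``$p\notin\aff S$ for each $S\in\mathcal S$'' need not hold for nongeneric $P$ (it does hold for $S\in\mathcal B$, which is all you need), and your opening reduction (``for each $S$, find $\tau_S,\kappa_S$ so that the intersection of all the wedges $\ldots$'') is not a per-$S$ statement and should be rephrased.
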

\begin{proof}
Since $P$ is full dimensional, $\{p\}$ can be completed to an affine basis $\{p\}\cup B\subset P$ of $\R^d$. We let $\mathcal{B}$ denote the set of $(d-1)$-element subsets of $B$, and note that $\mathcal{B}\subseteq \mathcal{S}$. Note that $p$ is the intersection of the hyperplanes spanned by $p$ and the elements of $\mathcal{B}$, that is $\bigcap_{S \in \mathcal{B}} \aff{S \cup \{p\}} = \{p\}$. This ensures that there exists $\alpha(P)>0$ such that such that every point $q\in \sphere{1}{p}$ is at distance at least $\alpha(P)$ from some hyperplane $\aff{S \cup \{p\}}$ with $S \in \mathcal{B}$. For any $\varepsilon$, applying a scaling of factor $\varepsilon$ and center $p_{d+1}$, we get that for every point $q\in \sphere{\varepsilon}{p}$, there is some $S \in \mathcal{B}$ such that the distance from $q$ to $\aff{S \cup \{p\}}$ is at least $\alpha(P)\varepsilon$. In other words, the intersection of the $(\alpha(P)\varepsilon)$-neighborhoods of $H_{S,p}$ for $S \in \mathcal{B}$ contains $\{p\}$ but is disjoint from $\sphere{\varepsilon}{p}$; that intersection is therefore  contained in $\ball{\varepsilon}{p}$.

\begin{figure}[htpb]
\centering
\includegraphics[width=.35\linewidth]{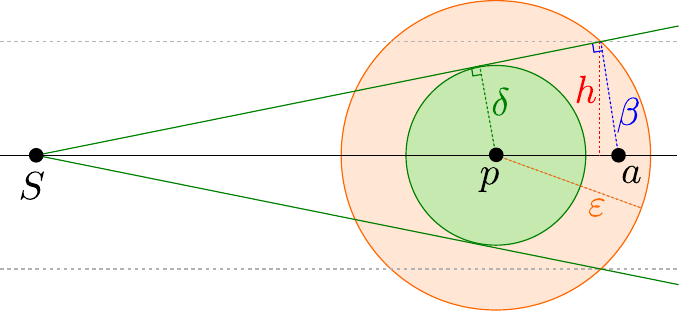}
\caption{This depicts the projection parallel to $\aff{S}$.
Since $\dist(p,\aff{S})>\varepsilon$, we have $\|S-p\|_2 >\varepsilon$ and the point $a$ lies inside $\ball{\varepsilon}{p}$, that is $\dist(p,a)<\varepsilon$. We then have $h= \delta\frac{\|S-a\|_2}{\|S-p\|_2} \le \delta\frac{\|S-p\|_2+\varepsilon}{\|S-p\|_2} \le 2\delta$.\label{fig:thales}}
\end{figure}

Now, let $\tau(P) \eqdef \min_{S \in {\mathcal B}} \dist(p,\aff{S})$ where $\dist(p,F)$ is the distance from a point $p$ to a flat $F$. Fix some $0 < \varepsilon \le \tau(P)$ and let $\delta \eqdef \frac{\alpha(P)}2\varepsilon$. Considering the projection parallel to $\aff{S}$ for some $S \in \mathcal{B}$ (Figure~\ref{fig:thales}) reveals that $\ball{\varepsilon}{p}\cap W_{S,p}(\delta)$ is contained in the $(2\delta)$-neighborhood of $H_{S,p}$. It follows that $\bigcap_{S\in \mathcal{B}}W_{S,p}(\alpha(P)\varepsilon)$ is contained in the intersection of the $(\alpha(P)\varepsilon)$-neighborhoods of $H_{S,p}$ for $S \in \mathcal{B}$, which does not intersect $\sphere{\varepsilon}{p}$. 
\end{proof}

\begin{lemma}\label{l:hyperplanewedgesmonotonicity}
Let $S=(s_1,s_2,\dots,s_{d-1})$ be a sequence of points in $\R^d$ and $p, q\in\R^d$ be such that $\{s_1,s_2,\dots,s_{d-1},p, q\}$ is affinely independent. If $q\notin H_{S,p}^+(\varepsilon)$, then $H_{S,p}^+(\varepsilon) \cap H_{S,p}^- \,\subset\, H_{S,q}^+$. If $q\notin H_{S,p}^-(\varepsilon)$, then $H_{S,p}^-(\varepsilon) \cap H_{S,p}^+ \,\subset\, H_{S,q}^-$.
\end{lemma}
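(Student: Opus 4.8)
The plan is to pass to a two-dimensional picture by projecting parallel to the $(d-2)$-flat $R:=\aff{S}$. Write $\pi\colon\R^d\to\R^2$ for this projection and $O:=\pi(R)$, a single point. For $\pi(u)\neq O$ the hyperplane $\aff{\{s_1,\dots,s_{d-1},u\}}$ projects onto the line $L_u:=\aff{\{O,\pi(u)\}}$ through $O$, and — after fixing an orientation of the image plane (this is the elementary fact underlying the remark preceding the lemma) — $H_{S,u}^+$ projects onto the open halfplane of $L_u$ lying left of the ray $\ve{O\pi(u)}$ and $H_{S,u}^-$ onto the halfplane lying right of it, \emph{with the same convention for every $u$}. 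Since $H_{S,p}^+(\varepsilon)$, $H_{S,p}^-$ and $H_{S,q}^+$ are all unions of fibres of $\pi$, the desired inclusion can be checked after projecting. Note that affine independence of $\{s_1,\dots,s_{d-1},p,q\}$ says precisely that $O,\pi(p),\pi(q)$ are not collinear; in particular $L_p\neq L_q$, $\pi(p)\notin L_q$, and $\pi(q)\notin L_p$.

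The key step is to describe $\Sigma:=\pi\bigl(H_{S,p}^+(\varepsilon)\bigr)$ as an open angular sector with apex $O$. By definition $\Sigma=\bigcup_{p'}\pi(H_{S,p'}^+)$ over $p'\in H_{S,p}^-\cap B_\varepsilon(p)$ (a nonempty set, since $p\in\partial H_{S,p}^-$, that avoids $R$, so each $L_{p'}$ is defined and $\neq L_p$). For any such $p'$, antisymmetry of $\chi$ gives $\chi(s_1,\dots,s_{d-1},p',p)=-\chi(s_1,\dots,s_{d-1},p,p')>0$, so $\pi(p)\in\pi(H_{S,p'}^+)$; hence the ray of $L_p$ opposite to $\pi(p)$ lies in no $\pi(H_{S,p'}^+)$, whereas letting $p'\to p$ shows $\pi(H_{S,p}^+)\subseteq\Sigma$. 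Therefore $\Sigma$ is an open sector of angle in $[\pi,2\pi)$, one of whose bounding rays is the ray of $L_p$ opposite to $\pi(p)$, and $W:=\Sigma\cap\pi(H_{S,p}^-)$ is the open wedge between $\ve{O\pi(p)}$ and the second bounding ray of $\Sigma$; in particular $\pi(p)\in\overline{W}$. (If the sector degenerated so that $\Sigma=\R^2$ minus a ray, then $\pi(q)\notin\Sigma$ would force $\pi(q)\in L_p$, contradicting independence; and if $W=\emptyset$ there is nothing to prove. So we may assume $\Sigma$ is a proper sector.)

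Now suppose $q\notin H_{S,p}^+(\varepsilon)$, i.e.\ $\pi(q)\notin\Sigma$, so $\pi(q)\notin W$; and since $\pi(H_{S,p}^+)\subseteq\Sigma$ while $\pi(q)\notin L_p$, also $\pi(q)\in\pi(H_{S,p}^-)$. Then $L_q$ is disjoint from the open wedge $W$: the ray $\ve{O\pi(q)}$ lies in $\pi(H_{S,p}^-)$ and, being a ray from $O$ not contained in the radially-defined set $W$, avoids $W$; the opposite ray lies in $\pi(H_{S,p}^+)$, which is disjoint from $W\subseteq\pi(H_{S,p}^-)$. Hence the connected set $W$ lies in a single open halfplane bounded by $L_q$, and since $\pi(p)\in\overline{W}\setminus L_q$ this is the halfplane containing $\pi(p)$; moreover $\pi(p)\in\pi(H_{S,q}^+)$, because $q\in H_{S,p}^-$ means $\chi(s_1,\dots,s_{d-1},p,q)<0$, hence $\chi(s_1,\dots,s_{d-1},q,p)>0$. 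Thus $W\subseteq\pi(H_{S,q}^+)$, which is exactly $H_{S,p}^+(\varepsilon)\cap H_{S,p}^-\subseteq H_{S,q}^+$. The second assertion follows by running the identical argument with the symbols $+$ and $-$ interchanged throughout.

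The delicate part is the second paragraph: verifying that $\Sigma$ is genuinely an angular sector with one edge lying along $L_p$, and, throughout, keeping the orientation conventions for the halfspaces attached to $p$, to $q$, and to the auxiliary points $p'$ mutually consistent after projection, while explicitly disposing of the degenerate cases ($W$ empty, or the hypothesis vacuous).
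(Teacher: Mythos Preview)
Your proof is correct. You and the paper arrive at the same conclusion via noticeably different routes, and both ultimately exploit the two-dimensional ``wedge'' structure of $H_{S,p}^+(\varepsilon)\cap H_{S,p}^-$.

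The paper argues directly in $\R^d$ by contradiction: given $x$ in the wedge but (allegedly) not in $H_{S,q}^+$, it slides along the segment $[x,p]$ to a point $x'$ on the hyperplane $\aff\{S,q\}$, uses convexity of the wedge to keep $x'$ inside it, and then produces a $p'\in B_\varepsilon(p)$ with $\aff\{S,p'\}=\aff\{S,x'\}=\aff\{S,q\}$, forcing $q\in\overline{H_{S,p'}^+}\subset H_{S,p}^+(\varepsilon)$. This is short but leaves a couple of steps implicit (why such a $p'$ exists in $H_{S,p}^-\cap B_\varepsilon(p)$; why the \emph{closed} halfspace $\overline{H_{S,p'}^+}$ is contained in the \emph{open} set $H_{S,p}^+(\varepsilon)$).

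You instead make the reduction to the quotient plane explicit and classify $\Sigma=\pi(H_{S,p}^+(\varepsilon))$ as an open angular sector with one edge along the ray of $L_p$ opposite $\pi(p)$; the argument then becomes a clean separation statement about rays through $O$. Your version is longer but entirely self-contained: the orientation bookkeeping is made explicit, the degenerate cases ($\Sigma$ missing only a ray, $W=\emptyset$) are disposed of, and the final step---$L_q$ misses the open wedge $W$, hence $W$ lies in the half-plane of $L_q$ containing $\pi(p)\in\overline W\setminus L_q$, which is $\pi(H_{S,q}^+)$ by antisymmetry of $\chi$---is transparent. The trade-off is length for explicitness; neither approach dominates the other.
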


\begin{figure}[htpb]
\centering
\includegraphics[width=.25\linewidth]{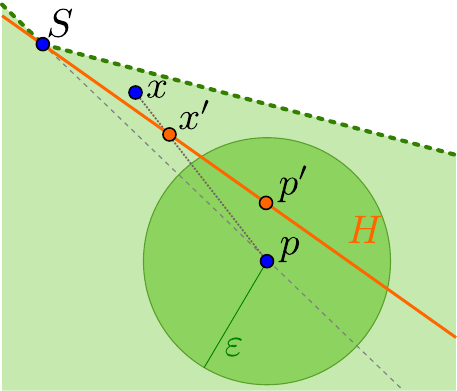}
\caption{Sketch of the proof of Lemma~\ref{l:hyperplanewedgesmonotonicity}.\label{fig:wedgeproof}}
\end{figure}

\begin{proof}
We prove the first statement, so suppose that $q\notin H_{S,p}^+(\varepsilon)$. Note that this implies that $p \in H_{S,q}^+$. Now consider some point $x\in H_{S,p}^+(\varepsilon) \cap H_{S,p}^-$. If $x\notin H_{S,q}^+$, then the hyperplane $H \eqdef \aff{\{s_1,s_2,\dots,s_{d-1},q\}}$ separates $p$ from $x$ (here we allow $x \in H$). Let $x'$ be the point of $H$ on the segment joining $x$ to $p$. Since the region $H_{S,p}^+(\varepsilon) \cap H_{S,p}^-$ is convex, contains $x$, and has $p$ on its boundary, it must also contain the point $x'$. There is therefore a point $p' \in B_\varepsilon(p)$ such that $x'\in \aff{\{s_1,s_2,\dots,s_{d-1},p'\}}$. This forces $p' \in H$, which in turns implies that $q \in \overline{H_{S,p'}^+} \subset H_{S,p}^+(\varepsilon)$, a contradiction. See Figure~\ref{fig:wedgeproof} for a sketch. The proof of the second statement is similar, {\em mutatis mutandis}.
\end{proof}

We now have all the ingredients to generalize Lemma~\ref{l:eps} to arbitrary point configurations.

\begin{lemma}\label{l:epsarb}
For any $d \ge 2$ and any configuration $R = \{p_0,\dots,p_{d},p_{d+1}\}$ with $p_0,\dots,p_{d}$ an affine basis, there exist constants $\tilde c(R)$ and $\tau(R)>0$ such that the following holds. For any point configuration $P\in(\R^d)^X$ containing $R$ and for every $0 < \varepsilon < \tau(R)$, there exists a generic extension of $P$ of size at most $\tilde c(R)\log\frac1{\varepsilon}$ whose chirotope cannot be realized on top of $\{p_0,p_1, \dots,p_{d},q_{d+1}\}$ for any $q_{d+1}$ with $\|p_{d+1}-q_{d+1}\|_2>\varepsilon$.
\end{lemma}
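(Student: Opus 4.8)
The plan is to reduce to the very generic case handled by Lemma~\ref{l:eps}, using the wedge machinery of Lemmas~\ref{l:hyperplanewedges} and~\ref{l:hyperplanewedgesmonotonicity}. A realization of $\chi_{\widehat P}$ on top of $\{p_0,\dots,p_d,q_{d+1}\}$ restricts, on the first $d+1$ labels, to $p_0,\dots,p_d$, so our task is to force $\|p_{d+1}-q_{d+1}\|_2\le\varepsilon$. A point is never on all $d+1$ facet hyperplanes of a simplex, so we may fix a set $B\subseteq\{p_0,\dots,p_d\}$ of size $d$ with $p_{d+1}\notin\aff{B}$ and with $\{p_{d+1}\}\cup B$ an affine basis. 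The argument in the proof of Lemma~\ref{l:hyperplanewedges}, carried out with this affine basis (in place of the one chosen there, and noting that the constants produced there depend only on the chosen basis and $p_{d+1}$), yields constants $\tau_0(R)>0$ and $\kappa_0(R)$, depending only on $R$, with $\bigcap_{S}W_{S,p_{d+1}}(\kappa_0(R)\varepsilon)\subseteq\ball{\varepsilon}{p_{d+1}}$ for all $\varepsilon<\tau_0(R)$, where $S$ ranges over the $d$ subsets of $B$ of size $d-1$. Writing $\delta\eqdef\kappa_0(R)\varepsilon$, it therefore suffices to produce a generic extension of $P$ of size $O(\log\frac1\varepsilon)$, with constant depending only on $R$, whose chirotope forces $q_{d+1}\in W_{S,p_{d+1}}(\delta)=H_{S,p_{d+1}}^+(\delta)\cap H_{S,p_{d+1}}^-(\delta)$ for each such $S$.

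Fix one $S=\{s_1,\dots,s_{d-1}\}\subset B$; I describe the gadget forcing $q_{d+1}\in H_{S,p_{d+1}}^+(\delta)$, the one forcing membership in $H_{S,p_{d+1}}^-(\delta)$ being symmetric via the second half of Lemma~\ref{l:hyperplanewedgesmonotonicity}. First I would pick a parameter $\delta'\ll\delta$ and a witness point $w$ obtained from $p_{d+1}$ by a generic displacement of size about $\delta'$ into the open halfspace $H_{S,p_{d+1}}^-$, chosen so that (i)~$\{p_0,\dots,p_d,w\}$ is very generic (a generic condition on $w$) and (ii)~$\ball{\delta'}{w}\subseteq H_{S,p_{d+1}}^+(\delta)\cap H_{S,p_{d+1}}^-$. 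The latter is possible because, projecting parallel to $\aff{S}$, the region $H_{S,p_{d+1}}^+(\delta)\cap H_{S,p_{d+1}}^-$ is a wedge of angular width of order~$\delta$ with apex the image of $\aff{S}$, which for $\delta'$ small relative to $\delta$ contains a $\delta'$-ball around the image of $w$. Then I would extend $P$ by $w$ and apply Lemma~\ref{l:eps} --- together with Remark~\ref{r:perturbhighd}, which keeps the multiplicative constant a function of $R$ alone when $w$ stays near $p_{d+1}$ --- to the very generic configuration $\{p_0,\dots,p_d,w\}$ with error parameter $\delta'$. This yields a generic extension $\widehat P_{S,+}$ of $P\cup\{w\}$ (hence of $P$), of size $O(\log\frac1{\delta'})=O(\log\frac1\varepsilon)$, whose chirotope cannot be realized on top of $\{p_0,\dots,p_d,q\}$ for any $q$ with $\|w-q\|_2>\delta'$.

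To see that the gadget works, let $\widehat Q$ realize $\chi_{\widehat P_{S,+}}$ on top of $\{p_0,\dots,p_d,q_{d+1}\}$ and let $w_Q$ be the point carrying the label of $w$ in $\widehat Q$. Then $\widehat Q$ also realizes $\chi_{\widehat P_{S,+}}$ on top of $\{p_0,\dots,p_d,w_Q\}$, so $\|w-w_Q\|_2\le\delta'$ and hence $w_Q\in H_{S,p_{d+1}}^+(\delta)\cap H_{S,p_{d+1}}^-$. On the other hand $w\in H_{S,p_{d+1}}^-$, so $\chi(s_1,\dots,s_{d-1},p_{d+1},w)<0$, and since $\chi_{\widehat Q}=\chi_{\widehat P_{S,+}}$ while $s_1,\dots,s_{d-1}$ and $q_{d+1}$ are fixed, the corresponding entry gives $\chi(s_1,\dots,s_{d-1},q_{d+1},w_Q)<0$, i.e. $w_Q\notin H_{S,q_{d+1}}^+$. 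If $q_{d+1}\notin H_{S,p_{d+1}}^+(\delta)$, then Lemma~\ref{l:hyperplanewedgesmonotonicity} gives $H_{S,p_{d+1}}^+(\delta)\cap H_{S,p_{d+1}}^-\subseteq H_{S,q_{d+1}}^+$, whence $w_Q\in H_{S,q_{d+1}}^+$ --- a contradiction. So $q_{d+1}\in H_{S,p_{d+1}}^+(\delta)$. (Configurations in which $q_{d+1}$ lies on a hyperplane $\aff{S\cup\{p_{d+1}\}}$, where Lemma~\ref{l:hyperplanewedgesmonotonicity} does not literally apply, can be separated from $p_{d+1}$ by $O(1)$ additional witnesses and do not affect the argument.)

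Finally I would run this construction for all $d$ subsets $S$ of $B$ of size $d-1$ and both signs, stacking the $2d$ gadgets: $P=P^{(0)}\subset P^{(1)}\subset\cdots\subset P^{(2d)}=:\widehat P$, choosing at each step the new points (via the genericity clauses of the Von~Staudt gadgets and of Lemma~\ref{l:eps}, plus one further round of scattering as in the proof of Lemma~\ref{l:eps} if needed) so that $\widehat P$ is a generic extension of $P$. Each gadget contributes $O(\log\frac1\varepsilon)$ points and scattering multiplies the count by a bounded factor, so $|\widehat P\setminus P|\le\tilde c(R)\log\frac1\varepsilon$ for a suitable $\tilde c(R)$ and all $\varepsilon<\tau(R)$, with $\tau(R)$ small enough that $\log\frac1\varepsilon$ absorbs the additive constants and $\delta=\kappa_0(R)\varepsilon$ lies below every threshold invoked. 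By the third paragraph, any realization of $\chi_{\widehat P}$ on top of $\{p_0,\dots,p_d,q_{d+1}\}$ satisfies $q_{d+1}\in\bigcap_S W_{S,p_{d+1}}(\delta)\subseteq\ball{\varepsilon}{p_{d+1}}$, which is the claim. I expect the main obstacle to be the quantitative geometry of the second and third paragraphs: the parameters $\delta$ and $\delta'$ and the shape of $H_{S,p_{d+1}}^+(\delta)\cap H_{S,p_{d+1}}^-$ near $p_{d+1}$ must be coordinated so that the pinned witness $w_Q$ ends up strictly on the $H_{S,q_{d+1}}^-$ side, which is exactly what lets Lemma~\ref{l:hyperplanewedgesmonotonicity} bite; keeping track of genericity and of the size bound across the $2d$ stacked gadgets is routine but also requires some care.
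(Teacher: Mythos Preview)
Your proposal is correct and follows essentially the same route as the paper: pick $B\subseteq\{p_0,\dots,p_d\}$ of size $d$ with $p_{d+1}\notin\aff{B}$, reduce via Lemma~\ref{l:hyperplanewedges} to trapping $q_{d+1}$ in each wedge $W_{S,p_{d+1}}(\delta)$, and for each $(d-1)$-subset $S\subset B$ and each sign plant a very generic witness near $p_{d+1}$ inside $H^\pm_{S,p_{d+1}}(\delta)\cap H^\mp_{S,p_{d+1}}$ and pin it via Lemma~\ref{l:eps} (your $w$ and $\delta'$ are the paper's $g_i^\pm$ and $\gamma=\delta/3$). The only difference is cosmetic: the paper concludes $q_{d+1}\in H^+_{S,h_i^+}\subseteq H^+_{S,p_{d+1}}(\delta)$ directly from the pinned witness $h_i^+$, whereas you run the equivalent contrapositive through Lemma~\ref{l:hyperplanewedgesmonotonicity}; the paper's direct formulation also sidesteps the affine-independence edge case you flag.
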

\begin{proof}
We focus on the case where the subconfiguration is not very generic, as otherwise the statement is Lemma~\ref{l:eps}. There is some $i$ such that $\{p_0,\dots,p_{d},p_{d+1}\}\setminus\{p_i\}$ is full dimensional. We let $\mathcal{S}$ be the set of all $(d-1)$-element subsets of $\{p_0,\dots,p_{d}\}\setminus\{p_i\}$ (which are all affinely independent). Let $\tau(R)>0$ and $\kappa(R)$ be the constants from Lemma~\ref{l:hyperplanewedges} and fix $0< \varepsilon < \tau(R)$. We let $\delta \eqdef \kappa(R)\varepsilon$ and we have $\bigcap_{S\in \mathcal{S}}W_{S,p_{d+1}}(\delta)\subset \ball{\varepsilon}{p_{d+1}}$.

\bigskip

We let $\gamma \eqdef \frac{\delta}3$, we fix some arbitrary order on $\mathcal{S} \eqdef \{S_1,S_2, \ldots, S_d\}$, and we note that as $p_0,p_1, \dots,p_{d}$ is an affine basis, the set of points $x$ such that $\{p_0,p_1, \dots,p_{d},x\}$ is a very generic configuration is dense in $\R^d$. For each $1 \le i \le d$ we can choose some point $g_i^+ \in \ball{\varepsilon}{p_{d+1}}$ such that  $\{p_0,p_1, \dots,p_{d},g_i^+\}$ is very generic and $\ball{\gamma}{g_i^+} \subset H_{S_i,p_{d+1}}^+(\delta) \cap H_{S_i,p_{d+1}}^-$. (Refer to Figure~\ref{fig:inclusiongh}.) Similarly, we pick $g_i^- \in \ball{\varepsilon}{p_{d+1}}$ such that  $\{p_0,p_1, \dots,p_{d},g_i^-\}$ is very generic and $\ball{\gamma}{g_i^-} \subset H_{S_i,p_{d+1}}^-(\delta) \cap H_{S_i,p_{d+1}}^+$.

\begin{figure}[htpb]
\centering
\includegraphics[width=.25\linewidth]{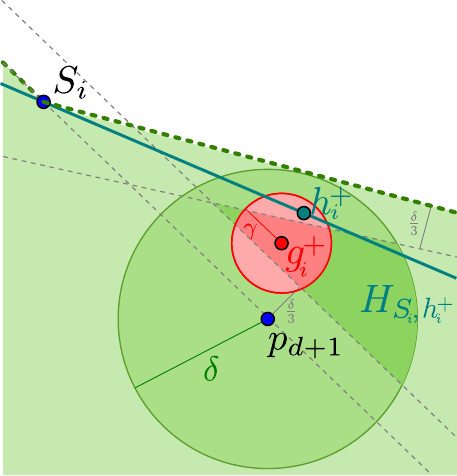}
\caption{The inclusion $H^+_{S_i,h_i^+} \subseteq H^+_{S_i,p_{d+1}}(\delta)$.\label{fig:inclusiongh}}
\end{figure}

We now let $\widehat P_0 \eqdef P$ and for $i=1,2, \ldots, d$ we build some generic extension $\widehat P_i$ of $\widehat P_{i-1}$ as follows. We first apply Lemma~\ref{l:eps} to obtain a generic extension $\widehat P_{i-\frac12}$ of $\widehat P_{i-1} \cup \{g_i^-,g_i^+\}$ whose chirotope cannot be realized on top of $\{p_0,p_1, \dots,p_{d},h_i^-\}$ for any $h_i^-$ with $\|g_i^--h_i^-\|_2>\gamma$. We then again apply Lemma~\ref{l:eps} to obtain a generic extension $\widehat P_{i}$ of $\widehat P_{i-\frac12}$ whose chirotope cannot be realized on top of $\{p_0,p_1, \dots,p_{d},h_i^+\}$ for any $h_i^+$ with $\|g_i^+-h_i^+\|_2>\gamma$. We let $\widehat P_\varepsilon \eqdef \widehat P_d$.

\medskip

Now, suppose that the chirotope of $\widehat P_\varepsilon$ has a realization $R$ on top of $\{p_0,p_1, \dots,p_{d},q_{d+1}\}$ for some point $q_{d+1}$. The construction forces  $R$ to contain in particular some points $h_1^-, h_1^+, h_2^-, h_2^+, \ldots, h_k^-, h_k^+$ such that for every $i \in [d]$, we have $\|g_i^+-h_i^+\|_2 \le \gamma$ and $\|g_i^--h_i^-\|_2 \le \gamma$. Moreover, since $g_i^+ \in H^-_{S_i,p_{d+1}}$ we must have $p_{d+1} \in H^+_{S_i,g_i^+}$, and since $R$ realizes the chirotope of $\widehat P_\varepsilon$ we must also have $q_{d+1} \in H^+_{S_i,h_i^+}$. (Here we used that all points with indices in $S_i$ are the same in $R$ and $P$.) Altogether, $q_{d+1}$ must be in $H^+_{S_i,h_i^+} \subseteq H^+_{S_i,p_{d+1}}(\delta)$ as well as in $H^-_{S_i,h_i^-} \subseteq H^-_{S_i,p_{d+1}}(\delta)$. In other words, $q_{d+1}$ must belong to $\bigcap_{i=1}^k W_{S_1,p_{d+1}}(\delta)$ which is contained in $\ball{\varepsilon}{p_{d+1}}$ by Lemma~\ref{l:hyperplanewedges}.

\bigskip

The point configuration $\widehat P_\varepsilon$ is constructed through $2d$ applications of Lemma~\ref{l:eps}. 
Each application takes a configuration $R_g \eqdef \{p_0,p_1,\ldots,p_d,g\}$ for some point $g \in \ball{\varepsilon}{p_{d+1}}$ to play the role of the subconfiguration~$R$ in the notations of Lemma~\ref{l:eps}.
Therefore, it adds at most $c(R_g)\log\frac1{\gamma}$ extension points, where $c(\cdot)$ is the constant given by Lemma~\ref{l:eps}. By Remark~\ref{r:perturbhighd} there is a threshold $\tau = \tau(R_{p_{d+1}})$ and a constant $\tilde c(R_{p_{d+1}})$
such that for every $0<\varepsilon \le \tau$ and for every $g \in \ball{\varepsilon}{p_{d+1}}$, we have $c(R_g)\leq \tilde c(R_{p_{d+1}})$. Altogether, for $\varepsilon \le \tau$, $\widehat P_\varepsilon \setminus P$ has size at most $2d\tilde c(R_{p_{d+1}})\log\frac1{\gamma} \le \tilde c(R)\log\frac1{\varepsilon}$ for some constant $\tilde c(R)$, because $\gamma=\frac{1}{3}\delta=\frac{ \kappa(P)}{3}\varepsilon$. 
\end{proof}

\section{Proof of Theorems~\ref{t:mainog} and~\ref{t:maineps}}\label{s:proofs}

We can finally complete the proof of our main theorems. We start with our rigidity property:

\setcounter{reptheorem}{0}
\begin{reptheorem}
Two full-dimensional point configurations $P$ and $Q$ in $\R^d$ are  directly affinely equivalent if and only if for every finite generic extension $\widehat P$ of $P$, the chirotope of $\widehat P$ is realizable on top of $Q$.
\end{reptheorem}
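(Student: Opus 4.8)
The plan is to handle the two implications separately. The forward direction is immediate and of the same flavour as the reverse implication in the proof of Proposition~\ref{p:main}: if $\phi$ is a direct invertible affine transform of $\R^d$ with $\phi(p_i)=q_i$ for all $i\in X$, then for any finite generic extension $\widehat P$ of $P$ the image $\phi(\widehat P)$ is again a finite generic extension of $Q$ (genericity is affine-invariant), with $\chi_{\phi(\widehat P)}=\chi_{\widehat P}$ since direct affine maps preserve orientations; hence $\chi_{\widehat P}$ is realizable on top of $Q$.

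For the converse I would argue by contraposition: starting from the failure of direct affine equivalence, I would produce a single finite generic extension $\widehat P$ of $P$ whose chirotope is not realizable on top of $Q$. Applying the hypothesis to the trivial extension $\widehat P=P$ first forces $\chi_P=\chi_Q$, so if $|X|=d+1$ then $P$ and $Q$ are affine bases of the same orientation and we are already done; assume therefore $|X|\ge d+2$ (and $d\ge 2$, the regime in which Lemmas~\ref{l:to-standard} and~\ref{l:epsarb} operate). By Lemma~\ref{l:to-standard} there is $Y\subseteq X$ with $|Y|=d+2$, which after relabelling I take to be $\{0,1,\dots,d+1\}$ with $p_0,\dots,p_d$ an affine basis, such that $R\eqdef P_{|Y}$ has corank~$1$ and is not directly affinely equivalent to $Q_{|Y}$. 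Since $\chi_P=\chi_Q$, the points $q_0,\dots,q_d$ also form an affine basis of the same orientation, so there is a unique direct invertible affine transform $\psi$ of $\R^d$ with $\psi(q_j)=p_j$ for $0\le j\le d$; non-equivalence of $R$ and $Q_{|Y}$ then forces $\psi(q_{d+1})\neq p_{d+1}$, and I set $\delta\eqdef\|p_{d+1}-\psi(q_{d+1})\|_2>0$, a quantity depending only on $P$ and $Q$.

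The heart of the proof is then a single application of Lemma~\ref{l:epsarb} to the configuration $P$ with distinguished subconfiguration $R$ and with $\varepsilon\eqdef\tfrac12\min(\delta,\tau(R))$: it yields a finite generic extension $\widehat P$ of $P$ whose chirotope cannot be realized on top of $\{p_0,\dots,p_d,q'\}$ for any point $q'$ with $\|p_{d+1}-q'\|_2>\varepsilon$. If $\chi_{\widehat P}$ were nonetheless realizable on top of $Q$, say by an extension $\widehat Q$ of $Q$, then $\psi(\widehat Q)$ would still realize $\chi_{\widehat P}$ while restricting on $Y$ to $\{p_0,\dots,p_d,\psi(q_{d+1})\}$; hence $\chi_{\widehat P}$ would be realizable on top of $\{p_0,\dots,p_d,\psi(q_{d+1})\}$, forcing $\|p_{d+1}-\psi(q_{d+1})\|_2\le\varepsilon<\delta$ and contradicting the choice of $\delta$. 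Therefore $\chi_{\widehat P}$ is not realizable on top of $Q$, which finishes the contraposition.

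I expect essentially all the difficulty to be upstream — in the Von Staudt machinery of Lemma~\ref{l:VonStaudt}, the planar rigidity witness of Lemma~\ref{l:asym2witness}, the scattering arguments, the very generic case of Lemma~\ref{l:eps}, and the halfspace-approximation perturbation of Lemma~\ref{l:epsarb} — so that the only subtlety left at this stage is the passage from the quantitative statement of Lemma~\ref{l:epsarb} to the purely qualitative one of Theorem~\ref{t:mainog}. The two points to get right are that Lemma~\ref{l:to-standard} legitimizes reducing to a $(d+2)$-point corank-$1$ restriction (so that Lemma~\ref{l:epsarb} becomes applicable and the obstruction is actually detected), and that the normalizing transform $\psi$ can be used freely because it is a direct affine transform of $\R^d$, hence preserves the chirotope and the ``extends'' relation and maps nothing to infinity. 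The same construction, iterated over all labels of $P$ outside a fixed affine basis and with the size bounds of Lemma~\ref{l:epsarb} carried along, gives Theorem~\ref{t:maineps}.
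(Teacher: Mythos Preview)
Your proposal is correct and follows essentially the same route as the paper's proof: both directions are handled identically, and for the contrapositive you normalize $Q$ against a chosen affine basis of $P$, find a label where the normalized point differs from $P$, and invoke Lemma~\ref{l:epsarb} once. The only cosmetic difference is that you route the choice of that basis and label through Lemma~\ref{l:to-standard}, whereas the paper simply picks any affine basis $\{p_0,\dots,p_d\}\subseteq P$ and any index $i$ with $\varphi(q_i)\neq p_i$; the corank-$1$ structure supplied by Lemma~\ref{l:to-standard} is not actually needed at this stage, since Lemma~\ref{l:epsarb} only requires $p_0,\dots,p_d$ to be an affine basis.
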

\begin{proof}
  The direct implication follows from the fact that any invertible direct affine transformation of $\R^d$ preserves orientations, and therefore if it maps $P$ to $Q$, then it transports every extension of $P$ to an extension of $Q$ with the same chirotope. For the reverse implication, consider two full-dimensional configurations $P,Q \in (\R^d)^X$ that are not directly affinely equivalent. Up to relabeling, we fix some basis $\{p_0,p_1, \ldots, p_d\} \subseteq P$ in $\R^d$. If $P$ and $Q$ do not have the same chirotope, then the chirotope of $P$ is already not realizable on top of $Q$. We therefore assume that they do have the same chirotope. This has two consequences: (i) since they are not directly affinely equivalent, they are not affinely equivalent, and (ii) $\{q_0,q_1, \ldots, q_d\}$ is also a basis of $\R^d$. We consider the affine map $\varphi$ that sends each $q_i$ to $p_i$, $0 \le i \le d$. Since $\varphi(Q) \neq P$, there exists some $i \in X$ such that $\varepsilon \eqdef \|p_i-\varphi(q_i)\|_2 >0$. By Lemma~\ref{l:epsarb}, there exists an extension $\widehat P_\varepsilon$ of $P$ whose chirotope cannot be realized on top of $\varphi(Q)$. This chirotope cannot be realized on top of $Q$ either.
\end{proof}

We can now prove our uniform quantitative statement:

\setcounter{reptheorem}{1}
\begin{reptheorem}
  For every $d \ge 2$ and every full-dimensional point configuration $P \in (\R^d)^X$ there exists constants $C(P)$ and $\tau(P)>0$ such that for every $0<\varepsilon \le \tau(P)$, there exists a finite generic extension $\widehat P$ of $P$, with $\widehat P \setminus P$ of size at most $C(P)\log\frac1{\varepsilon}$ with the following property: for any configuration $Q \in (\R^d)^X$ such that $\chi_{\widehat P}$ can be realized on top of $Q$, there exists a direct affine transform $\varphi$ such that $\max_{i \in X} \|p_i-\varphi(q_i)\|_2 \le \varepsilon$.
\end{reptheorem}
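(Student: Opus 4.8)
The plan is to combine, one point at a time, the asymmetric witnesses produced by Lemma~\ref{l:epsarb}. Since $P$ is full-dimensional, after relabelling we may assume $\{0,1,\ldots,d\}\subseteq X$ with $(p_0,\ldots,p_d)$ an affine basis of $\R^d$. If $X=\{0,\ldots,d\}$ then $P$ is a simplex and we take $\widehat P\eqdef P$: any $Q$ with $\chi_Q=\chi_P$ is directly affinely equivalent to $P$, so the conclusion holds even with $\varepsilon=0$; put $C(P)=\tau(P)=1$. So assume $|X|\ge d+2$. For each $i\in X\setminus\{0,\ldots,d\}$ let $R_i\eqdef P_{|\{0,\ldots,d,i\}}$, a configuration of the shape required by Lemma~\ref{l:epsarb} (its first $d+1$ points form an affine basis, and up to relabelling its last point plays the role of ``$p_{d+1}$''); let $\tilde c(R_i)$ and $\tau(R_i)>0$ be the constants that lemma provides, and set $\tau(P)\eqdef\min_i\tau(R_i)$ and $C(P)\eqdef\sum_i\tilde c(R_i)$.

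Fix $0<\varepsilon\le\tau(P)$ and enumerate $X\setminus\{0,\ldots,d\}$ as $i_1,\ldots,i_m$. I would build $\widehat P$ incrementally: set $\widehat P^{(0)}\eqdef P$ and, given $\widehat P^{(k-1)}$ (which contains $R_{i_k}$ since it extends $P$), apply Lemma~\ref{l:epsarb} with $\widehat P^{(k-1)}$ playing the role of ``$P$'' and $R_{i_k}$ that of ``$R$'' to obtain a generic extension $\widehat P^{(k)}$ of $\widehat P^{(k-1)}$, with $|\widehat P^{(k)}\setminus\widehat P^{(k-1)}|\le\tilde c(R_{i_k})\log\frac1\varepsilon$, whose chirotope cannot be realized on top of $\{p_0,\ldots,p_d,q\}$ for any point $q$ (carrying label $i_k$) with $\|p_{i_k}-q\|_2>\varepsilon$. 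Set $\widehat P\eqdef\widehat P^{(m)}$. A routine check --- using that, $P$ being full-dimensional, each point added along the way can be placed so as to avoid every proper flat spanned by the points already present --- shows that a generic extension of a generic extension of a full-dimensional configuration is again a generic extension of it; hence $\widehat P$ is a finite generic extension of $P$ with $|\widehat P\setminus P|\le\sum_k\tilde c(R_{i_k})\log\frac1\varepsilon=C(P)\log\frac1\varepsilon$.

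It remains to verify the discrimination property. Suppose $\chi_{\widehat P}$ is realized on top of some $Q\in(\R^d)^X$, say by $\widehat Q\supseteq Q$ with $\chi_{\widehat Q}=\chi_{\widehat P}$. Restricting to $X^{d+1}$ gives $\chi_Q=\chi_P$, so $(q_0,\ldots,q_d)$ is an affine basis with the same orientation as $(p_0,\ldots,p_d)$; let $\varphi$ be the unique affine transform with $\varphi(q_j)=p_j$ for $0\le j\le d$, which is then direct. Since $\varphi$ preserves orientations, $\varphi(\widehat Q)$ realizes $\chi_{\widehat P}$ and extends $\varphi(Q)$. For each $k$, restricting $\varphi(\widehat Q)$ to the label set of $\widehat P^{(k)}$ (which contains $X$) gives a realization of $\chi_{\widehat P^{(k)}}$ extending $\varphi(Q)$, hence extending its subconfiguration $\{p_0,\ldots,p_d,\varphi(q_{i_k})\}$ on labels $\{0,\ldots,d,i_k\}$. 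So $\chi_{\widehat P^{(k)}}$ is realizable on top of $\{p_0,\ldots,p_d,\varphi(q_{i_k})\}$, which by the defining property of $\widehat P^{(k)}$ forces $\|p_{i_k}-\varphi(q_{i_k})\|_2\le\varepsilon$. Combined with $\|p_j-\varphi(q_j)\|_2=0$ for $0\le j\le d$, this yields $\max_{i\in X}\|p_i-\varphi(q_i)\|_2\le\varepsilon$, as required.

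The serious content is entirely packaged inside Lemma~\ref{l:epsarb}; in this final assembly the only delicate points are keeping the iterated extensions generic (handled by the full-dimensionality observation above) and checking that a realization of $\chi_{\widehat P}$ on top of $Q$, once pushed forward by the direct affine map $\varphi$ and restricted, really is a realization of each $\chi_{\widehat P^{(k)}}$ on top of the corresponding $(d+2)$-point base. I expect no essential difficulty beyond these.
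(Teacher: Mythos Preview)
Your proposal is correct and follows essentially the same route as the paper: fix an affine basis $\{p_0,\ldots,p_d\}\subseteq P$, iteratively apply Lemma~\ref{l:epsarb} once for each remaining label to build $\widehat P$, set $\tau(P)$ and $C(P)$ as the min and sum of the per-point constants, and verify the conclusion by pushing a realization on top of $Q$ through the direct affine map sending $(q_0,\ldots,q_d)$ to $(p_0,\ldots,p_d)$. The only cosmetic differences are that the paper argues the final step by contrapositive and leaves the ``generic extension of a generic extension is generic'' transitivity implicit, whereas you flag it explicitly.
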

\begin{proof}
Relabeling if needed, we can assume that $X=\{0,1,\dots, n\}$ and that $\{p_0,p_1, \ldots, p_d\}$ is an affine basis. Let $\widehat P_d\eqdef P$, and for every $d+1\leq i \leq n$, apply Lemma~\ref{l:epsarb} to find a generic extension $\widehat P_i$ of $\widehat P_{i-1}$ such that for any $q_i$ with $\|p_{i}-q_{i}\|_2>\varepsilon$, the chirotope of $\widehat P_i$ cannot be realized on top of $\{p_0,p_1, \dots,p_{d},q_{i}\}$. The upper bound on the size of $\widehat P \setminus P$ follows directly from Lemma~\ref{l:epsarb} with $C(P)$ being the sum of the $\tilde c(R)$ from Lemma~\ref{l:epsarb} for $R=\{p_0,p_1, \ldots, p_d, p\}$ with $p$ ranging over $P\setminus\{p_0,p_1, \ldots, p_d\}$, whenever $\varepsilon$ is bounded from above by the minimum of the $\tau(R)$ from Lemma~\ref{l:epsarb} for $R=\{p_0,p_1, \ldots, p_d, p\}$ with $p$ ranging over $P\setminus\{p_0,p_1, \ldots, p_d\}$.

\bigskip

 We claim that $\widehat P\eqdef \widehat P_{n}$ fulfills the desired conditions.  Indeed, let $Q \in (\R^d)^X$ be a configuration for which there is no direct affine transform $\varphi$ such that $\max_{i \in X} \|p_i-\varphi(q_i)\|_2 \le \varepsilon$. If $\{q_0,q_1, \ldots, q_d\}$ is not an affine basis with the same orientation as $\{p_0,p_1, \ldots, p_d\}$, then $P$ and $Q$ do not have the same chirotope and $\chi_{\widehat P}$ is not realizable on top of $Q$. If it is an affine basis with the same orientation, let $\psi$ be the direct affine transform mapping $q_i\mapsto p_i$ for $0\leq i\leq d$. By hypothesis, we have $\|p_i-\psi(q_i)\|_2 > \varepsilon$ for some $i\in X$. In particular, the chirotope of $\widehat P_i$ cannot be realized on top of $\psi(Q)$, and hence it cannot be realized on top of $Q$ either. The same goes for the chirotope of its extension $\widehat P$.
\end{proof}

 Note that given two bases $\{p_0,p_1, \ldots, p_d\}$ and $\{q_0,q_1, \ldots, q_d\}$ of $\R^d$ and some $\varepsilon>0$, the set of affine transforms $\varphi$ of $\R^d$ such that $\|p_i-\varphi(q_i)\|_2 \le \varepsilon$ is compact. With this, Theorem~\ref{t:mainog} also immediately follows from Theorem~\ref{t:maineps}.

\bibliographystyle{plain}
\bibliography{bibliography.bib}

\end{document}